\DeclareMathOperator*{\argmax}{arg\,max}
\DeclareMathOperator*{\argmin}{arg\,min}
\definecolor{hanblue}{rgb}{0.27, 0.42, 0.81}
\DeclareMathOperator{\dom}{\operatorname{dom}}
\newcommand{\N}{\mathbb{N}}
\newcommand{\R}{\mathbb{R}}
\newcommand{\M}{\mathcal{M}}
\newcommand{\RR}{\mathbb{R}}      
\newcommand{\NN}{\mathbb{N}} 
\newcommand{\TT}{\mathbb{T}}
\newcommand{\tr}[1]{tr(C(1)^{-1})}
\newcommand{\vertiii}[1]{{\left\vert\kern-0.25ex\left\vert\kern-0.25ex\left\vert #1
		\right\vert\kern-0.25ex\right\vert\kern-0.25ex\right\vert}}
\newcommand{\half}{\frac{1}{2}}
\newcommand{\Na}{N_{\alpha,\lambda_0}}
\newcommand{\weakstar}{\stackrel{*}{\rightharpoonup}}  
   \newcommand{\indi}{\mathds{1}}
\newcommand{\calM}{\mathcal{M}}
\newcommand{\calB}{\mathcal{B}}
\newcommand{\calC}{\mathcal{C}}
\newcommand{\Linfty}{L^{\infty}((0,1))}
\newcommand{\weakarrow}{\rightharpoonup}
\newcommand{\ds}{\displaystyle}
\newcommand{\ind}{\mathds{1}_{[a,b]}}
\newcommand{\supp}{{\rm supp}\, \mu_0}
\newcommand{\Exc}{{\rm Exc}(u_{0})}
\newcommand{\Exceps}{{\rm Exc}^{\varepsilon}(u_{0})}
\newcommand{\dm}{\mathrm{d}}
\let\div\relax
\DeclareMathOperator*{\div}{div}
\newcommand{\q}[1]{``#1''}
\DeclareFontFamily{OMX}{MnSymbolE}{}
\DeclareSymbolFont{MnLargeSymbols}{OMX}{MnSymbolE}{m}{n}
\DeclareFontShape{OMX}{MnSymbolE}{m}{n}{
	<-6>  MnSymbolE5
	<6-7>  MnSymbolE6
	<7-8>  MnSymbolE7
	<8-9>  MnSymbolE8
	<9-10> MnSymbolE9
	<10-12> MnSymbolE10
	<12->   MnSymbolE12
}{}
\DeclareFontShape{OMX}{MnSymbolE}{b}{n}{
	<-6>  MnSymbolE-Bold5
	<6-7>  MnSymbolE-Bold6
	<7-8>  MnSymbolE-Bold7
	<8-9>  MnSymbolE-Bold8
	<9-10> MnSymbolE-Bold9
	<10-12> MnSymbolE-Bold10
	<12->   MnSymbolE-Bold12
}{}
\let\llangle\@undefined
\let\rrangle\@undefined
\DeclareMathDelimiter{\llangle}{\mathopen}%
{MnLargeSymbols}{'164}{MnLargeSymbols}{'164}
\DeclareMathDelimiter{\rrangle}{\mathclose}%
{MnLargeSymbols}{'171}{MnLargeSymbols}{'171}
\newcommand{\res}{\mathop{\hbox{\vrule height 7pt width .5pt depth 0pt
\vrule height .5pt width 6pt depth 0pt}}\nolimits}
\newcommand*\rel@kern[1]{\kern#1\dimexpr\macc@kerna}
\newcommand*\widebar[1]{%
	\begingroup
	\def\mathaccent##1##2{%
		\rel@kern{0.8}%
		\overline{\rel@kern{-0.8}\macc@nucleus\rel@kern{0.2}}%
		\rel@kern{-0.2}%
	}%
	\macc@depth\@ne
	\let\math@bgroup\@empty \let\math@egroup\macc@set@skewchar
	\mathsurround\z@ \frozen@everymath{\mathgroup\macc@group\relax}%
	\macc@set@skewchar\relax
	\let\mathaccentV\macc@nested@a
	\macc@nested@a\relax111{#1}%
	\endgroup
}
\numberwithin{equation}{section}
\definecolor{darkred}{rgb}{.7,0,0}
\definecolor{green}{rgb}{0,0.7,0}
\theoremstyle{plain}
\newtheorem{theorem}{Theorem}[section]
\newtheorem{Lemma}[theorem]{Lemma}
\newtheorem{proposition}[theorem]{Proposition}
\theoremstyle{definition} 
\newtheorem{assumption}[theorem]{Assumption}
\newtheorem{definition}[theorem]{Definition}
\newtheorem{remark}[theorem]{Remark}
\title{A General Theory for Exact Sparse Representation Recovery in Convex Optimization 
\footnotetext{2020 Mathematics Subject Classification: 46A55, 49K27, 49N15, 49Q22, 52A40, 54E35}
\footnotetext{Keywords: Choquet theory, convex optimization, duality, extreme points, metric space, sparsity, stability}}
\author{Marcello Carioni \!\!\thanks{Department of Applied Mathematics, University of Twente, 7500AE Enschede, The Netherlands \\
(\texttt{m.c.carioni@utwente.nl}, \texttt{l.delgrande{@}utwente.nl})} , Leonardo Del Grande\footnotemark[1]
}
\date{}
\begin{document}
 
	
	

	\maketitle

\begin{abstract}
\noindent 
In this paper, we investigate the recovery of the sparse representation of data in general infinite-dimensional 
optimization problems regularized by convex functionals.
We show that it is possible to define a suitable non-degeneracy condition on the minimal-norm dual certificate, extending the well-established non-degeneracy source condition (NDSC) associated with total variation regularized problems in the space of measures, as introduced in \cite{Peyre}. In our general setting, we need to study how the dual certificate is acting, through the duality product, on the set of extreme points of the ball of the regularizer, seen as a metric space. This justifies the name Metric Non-Degenerate Source Condition (MNDSC). More precisely, we impose a second-order condition on the dual certificate, evaluated on curves with values in small neighbourhoods of a given collection of $n$ extreme points. 
By assuming the validity of the MNDSC, together with the linear independence of the measurements on these extreme points, we establish that, for a suitable choice of regularization parameters and noise levels, the minimizer of the minimization problem is unique and is uniquely represented as a linear combination of $n$ extreme points. The paper concludes by obtaining explicit formulations of the MNDSC for three problems of interest.
First, we examine total variation regularized deconvolution problems, showing that the classical NDSC implies our MNDSC, and recovering a result similar to \cite{Peyre}. Then, we consider 1-dimensional BV functions regularized with their BV-seminorm and  pairs of measures regularized with their mutual 1-Wasserstein distance. In each case, we provide explicit versions of the MNDSC and formulate specific sparse representation recovery results.
\end{abstract}

	\section{Introduction} 
 \label{sec:intro}

In this paper, we are concerned with the recovery of the sparse representation of data in convex optimization problems formulated in general Banach spaces. In particular, given a Banach space $X$, a linear operator $K$ mapping to a Hilbert space $Y$, and a convex functional $G$, we are interested in ensuring both sparsity and uniqueness for the minimizers of the following minimization problem:
\begin{equation}\label{int2}
    \inf_{u\in X} \frac{1}{2}\|Ku-y_0 - w\|_Y^2+ \lambda G(u) \tag{$\mathcal{P}_{\lambda}(y_0+w)$},
\end{equation}
for a small parameter $\lambda > 0$ and low noise $w \in Y$. We aim at obtaining such a result under suitable assumptions on the solution $u_0 \in X$ to the following hard-constrained problem with no noise: 
\begin{equation} \label{int1}
	\inf_{u\in X: Ku=y_0} G(u)
 \tag{$\mathcal{P}_{h}(y_0)$}. 
\end{equation}

Convex optimization problems such as \refeq{int1} and \refeq{int2} have been successfully employed in a wide variety of fields, where data is best modelled as an infinite dimensional Banach space. From an inverse problems perspective \cite{benning2018modern}, \ref{int2} is a classical Tikhonov regularization problem. This is employed to reconstruct the original data $u_0$ from a noisy measurement $y_0 = Ku_0 - w$ by enforcing a regularization given by the convex penalty $G$. 
Notable applications can be found in various domains, such as super-resolution where $G$ is the total variation of Radon measures (BLASSO) \cite{azais2015spike, CandSR}, image processing with $G$ as the BV-seminorm (or higher-order variants) of an image \cite{rudin1992nonlinear, bredies2010total, chambolle1997image}, PDE-based optimization and splines theory where $G$ is the residual of a given PDE \cite{unser, duchon1977splines, unser2005cardinal}, inverse problems regularized with optimal transport energies \cite{KRext, bredies2020optimal, DGCG}, and more recently, in theoretical machine learning approaches \cite{bach2017breaking, parhi2021banach}.
 
The sparse structure of minimizers for the problems \ref{int1} and \ref{int2} has been recently the focus of  many works. In \cite{boyer, bc} it has been pointed out that the sparse building blocks of \ref{int1} and \ref{int2} are the extreme points of the ball of the regularizer $G$. This claim has been justified by the introduction of so-called representer theorems  \cite{boyer, bc} that, under finite dimensional measurements, ensure the existence of a minimizer that can be represented as a finite linear combination of such extreme points. In recent years, representer theorems have been obtained for a wide range of optimization problems, showing the generality of such an infinite-dimensional point of view of sparsity. We refer the interested reader to \cite{BcBB, KRext, laville2023off, iglesias2022extremal, parhi2021banach, ambrosio2022linear, ambrosio2023functions, bredies2023extreme} for more examples of representer theorems. 
However, despite their generality, these results are only scraping the surface of an infinite-dimensional theory of sparsity.
This paper aims at achieving a firm step in this direction, addressing the sparse representation recovery of solutions to \ref{int2}. 
It is important to remark that representer theorems apply to problems with finite-dimensional data, whereas the present paper deals with an infinite-dimensional Hilbert space $Y$. In this context, a solution $u_0$, obtained from the hard-constrained problem, must be assumed to be \emph{sparse}.

The study of sparse representation recovery in \emph{finite dimension} received extensive attention during the 1990s, particularly focusing on sparse stability and recovery properties of $\ell^1$-regularized optimization problems \cite{donoho1992superresolution, fuchs2004sparse}. On the contrary, similar results in infinite-dimensional settings have been achieved only recently. Sparse representation recovery has been successfully analyzed for TV-regularized problems in the space of measures on the torus (BLASSO), as presented in the seminal work of Duval and Peyr\'e \cite{Peyre}. This result was later extended to variants of BLASSO and under more general perturbations in \cite{duval2020characterization, denoyelle2017support, huynh2023towards, offthegridPoon, poon2019multidimensional}. However, beyond BLASSO, very little is known. While few recent results have been obtained for TV-regularized BV functions  \cite{decastro2023exact,holler2022exact}, a general theory is currently not available. This paper aims to bridge this gap. 

In their work \cite{Peyre}, the authors proved that, for a sparse measure $\mu_0 = \sum_{i=1}^n c_0^i \delta_{x_0^i}$ such that $K\mu_0 = y_0$, and satisfying a suitable non-degenerate source condition (NDSC) for the minimal-norm dual certificate $\eta_0 \in C(\mathbb{T})$, the minimizer of BLASSO is unique and composed exactly of $n$ Dirac deltas as $\mu_0$.
The NDSC introduced in \cite{Peyre} requires that $\eta_0$ is twice differentiable, and the following conditions hold: 
\begin{itemize}
\item[$(i)$]  ${\rm Im}\, K_* \cap \partial \|\mu_0\|_{M(\mathbb{T})} \neq \emptyset$,
\item[$(ii)$] $\argmax_x |\eta_0(x)| = \{x_0^1, \ldots, x_0^n\}$,
\item[$(iii)$] $\eta_0''(x_0^i) \neq 0$ for every $i=1,\ldots,n$.
\end{itemize} 
Since the extreme points of the unit ball of the total variation are precisely Dirac deltas, our work can be viewed as an extension of \cite{Peyre} to general convex optimization problems whose sparse structure 
is determined by the extreme points of the ball of the regularizer $G$.  In particular, given $u_0 = \sum_{i=1}^n c_0^i u_0^i$ such that $c_0^i >0$, $u_0^i \in {\rm Ext}(\{u\in X: G(u)\leqslant 1\})$ and $Ku_0 = y_0$, 
we aim to prove that the solution $\tilde u_\lambda \in X$ to \ref{int2} is unique and can be uniquely represented as a linear combination of $n$ extreme points of $\{u\in X: G(u)\leqslant 1\}$. Similarly to \cite{Peyre}, we also need to impose a set of appropriate assumptions on the solution to the dual problem of \ref{int1}. These assumptions are necessary to guarantee the uniqueness and sparsity of the minimizer of \ref{int1}, that there exists a solution to the dual problem associated with \ref{int1}, and to ensure a non-degeneracy for the dual certificate.
To this end, we consider the minimal-norm dual certificate for \ref{int1}, defined as  $\eta_0:= K_*p_{0} \in X_*$, where $p_0$ represents the minimal-norm solution to the dual problem associated with \ref{int1}. Since the extreme points of the ball of $G$ are, in general, not Dirac deltas, 
we have to examine how $\eta_0$ is \emph{acting} on the extreme points set. This is achieved by looking at the duality mapping 
\begin{align}\label{eq:map}
u \mapsto \langle \eta_0, u\rangle,
\end{align} 
where $u \in \mathcal{B} := \overline{{\rm Ext}(\{u \in X : G(u) \leqslant 1\})}^*$. 
In particular, $\mathcal{B}$ is a metric space because there exists a metric $d_{\calB}$ that metrizes the weak* convergence on $\calB$. Therefore, to understand the non-degeneracy of \eqref{eq:map}, we have to analyze the local behaviour of the mapping \eqref{eq:map}, taking values on the metric space $\mathcal{B}$ (according to the metric topology of $\mathcal{B}$). These considerations justify our term \emph{Metric Non-Degenerate Source Condition} (MNDSC) and lead to the following generalization of conditions $(i), (ii)$ and $(iii)$.
Condition  $(i)$ can be simply rewritten for general regularizers $G$ and turns out to be the classical source condition in inverse problems \cite{burger2004convergence}.  
Condition $(ii)$ can be generalized in our setting by simply requiring that the duality product \eqref{eq:map} achieves its maximum precisely on the extreme points $\{u_0^1, \ldots, u_0^n\}$ representing $u_0$. The crucial challenge of this paper lies in the generalization of condition $(iii)$. Indeed, since we avoid making any structural assumptions about $\mathcal{B}$, the task is to formulate a suitable second-order condition for the mapping \eqref{eq:map}, where $u$ is varying in the metric space $\mathcal{B}$. This challenge is compounded by the fact that, in general, $\mathcal{B}$ lacks a differentiable structure.
To overcome these difficulties, we introduce a non-degeneracy condition formulated using parameterized curves in the metric space $\mathcal{B}$. Precisely, we require that there exist $\varepsilon,\delta >0$ such that for any two elements in $B_{\varepsilon}(u_0^i):=\left\{u \in \calB: d_{\mathcal{B}}\left(u_0^i, u\right) \leqslant \varepsilon\right\}$, there exists a curve $\gamma : [0,1] \rightarrow B_{\varepsilon}(u_0^i)$, connecting them, satisfying
\begin{align}\label{intMNDSC}
\frac{d^2}{dt^2}\langle \eta_0,\gamma(t)\rangle< -\delta \quad \forall t \in (0,1).
\end{align}
We note that this non-degeneracy condition is not defined pointwise, in the sense that \eqref{intMNDSC} must hold for any pair of points in a neighbourhood of $u_0^i$. In other words, we are \emph{testing} the non-degeneracy along all the possible curves inside $ B_{\varepsilon}(u_0^i)$, not only those passing through $u_0^i$.

With the MNDSC established, we can proceed to describe our main theorem. We define the set of admissible parameters/noise levels as 
\begin{equation}\label{eq:pare}
N_{\alpha, \lambda_0}=\left\{(\lambda, w) \in \mathbb{R}_{+} \times Y : 0 \leqslant \lambda \leqslant \lambda_0  \text { and } \|w\|_Y\leqslant \alpha \lambda\right\},
\end{equation} 
for suitably chosen values of $\alpha$ and $\lambda_0$. Note that the set \eqref{eq:pare} is the classical admissible region that allows to show the convergence of minimizers of \ref{int2} to those of \ref{int1}. This convergence is observed when both the noise level and regularization parameter approach zero while belonging to $N_{\alpha, \lambda_0}$ \cite{Hofmann}. 
We consider  $u_0 = \sum_{i=1}^n c_0^i u_0^i$ such that $K u_0 = y_0$,  where $c_0^i >0$ and $u_0^i \in \mathcal{B}\setminus \{0\}$ for every $i=1,\ldots,n$. We prove that 
if $u_0$  satisfies the MNDSC, and if $\{Ku_0^i\}_{i=1}^n$ are linearly independent,
then, for $\varepsilon>0$ small enough, there exist $\alpha>0, \lambda_0>0$ such that, for all $(\lambda, w) \in N_{\alpha, \lambda_0}$, the solution $\tilde{u}_\lambda$ to \ref{def:minprobsoftnoise} is unique and admits a unique representation of the form:
\begin{align}
\ds{\tilde{u}_\lambda=\sum_{i=1}^n \tilde{c}_{\lambda}^i \tilde{u}_{\lambda}^i},
\end{align}
where $\tilde{u}_{\lambda}^i \in B_{\varepsilon}(u_0^i) \setminus \{0\}$ such that $\langle \tilde{\eta}_{\lambda}, \tilde{u}_{\lambda}^i\rangle=1$, 
$\tilde{c}_{\lambda}^i > 0$ and  $\tilde{c}_{\lambda}^i$ are continuous functions of $(\lambda,w)$. We call this result \emph{exact sparse representation recovery}, meaning that in a suitable range of parameters $(\lambda,w)$, the representation of $\tilde u_\lambda$ recovers the sparse representation of $u_0$ with the same number of extreme points. Moreover, this recovery process is continuous with respect to the weights and the extreme points in the representation, where, for the latter, continuity is defined in the metric topology of $\mathcal{B}$.

To conclude the paper, we provide three specific examples showing possible applications of our result. The first example aims at recovering the results of \cite{Peyre} by applying our general theorem. We consider BLASSO for Radon measures on the torus and convolutional operator $K$. Since the extreme points of the ball of the total variation are signed Dirac deltas, we establish a connection between our MNDSC and the NDSC introduced in \cite{Peyre}. In particular, the NDSC implies our MNDSC for this specific case. Additionally, we show that, by applying our main theorem, we can achieve a result similar to the one presented in \cite{Peyre}, without obtaining a decay rate for the coefficients and the locations of the Dirac deltas. This difference results from our use of a more general version of the implicit function theorem \cite{Krantz}, which does not require differentiability of the function with respect to all variables,  but ensures only continuity for the unique 
implicit function obtained through the theorem. 

In the second example, we shift our focus to one-dimensional BV functions with zero boundary conditions, using the BV-seminorm  as the regularizer. We prove that the extreme points are signed indicator functions of an interval contained in $(0,1)$. Moreover, we show that the MNDSC can be ensured by requiring that the first derivative  of the minimal-norm dual certificate has a suitable sign at the jumps of $u_0$. 

In our final example, we consider pairs of Radon measures on the torus, regularized with the mutual $1$-Wasserstein distance and their total variation norms. Taking advantage of the results in \cite{KRext}, we prove that the extreme points of the regularizer include not only the trivial pair $(0,0)$, but also pairs of rescaled Dirac deltas of the form $(\frac{\delta_x}{2+|x-\bar x|},\frac{\delta_{\bar x}}{2+|x-\bar x|})$. 
We show that our general MNDSC can be ensured by requiring that the Hessian of the function 
\begin{align*}
F(x,\bar x)=\frac{\varphi_0(x)+\psi_0(\bar x)}{2+x-\bar x}
\end{align*}
is negative definite on the support of $u_0$, where $\eta_0=(\varphi_0,\psi_0)\in C^2(\TT)\times C^2(\TT)$ is the minimal-norm dual certificate. This condition is a reminiscence of the second-order condition required in \cite{KRext} to prove fast convergence of a generalized conditional gradient method \cite{LinGen, yu, cristinelli2023conditional, DGCG, bredies2023sparse} for an optimization problem regularized with the KR-norm \cite{hanin1992kantorovich}.

 \subsection{Outline of the paper}


In Section \ref{SecNotPrRes}, we introduce the notations and preliminary results that are used throughout the paper.
In Section \ref{SecMinProb}, we present the minimization problems \ref{int1} and \ref{int2}, their corresponding dual formulations, and discuss their optimality conditions.
In Section \ref{Sec4}, we introduce the \emph{Metric Non-Degenerate Source Condition} (MNDSC) and show how it implies that the solution to \ref{def:minprobsoftnoise}  has a unique extreme point in a neighbourhood of each extreme point representing the solution to \ref{def:minprobhard}.
Section \ref{Sec5} is devoted to our main result, which provides the exact sparse representation recovery of the solution $\tilde u_{\lambda}$ to \ref{def:minprobsoftnoise}. 
Finally, in Section \ref{SecExam}, we present and analyze three examples. In these examples, we rephrase the MNDSC and apply our main theorem to practical scenarios, showing its applicability and relevance.

		
		
\section{Notations and Preliminaries}\label{SecNotPrRes}
Let $X$ be a Banach space with the norm denoted by $\|\cdot\|_X$
and $Y$ an Hilbert Space with scalar product $(\cdot,\cdot)$. Suppose that $X$ is the dual of a separable Banach space $X_*$ with norm denoted by $\|\cdot\|_{X_*}$. We consider:
\begin{itemize}
	\item $K:X\rightarrow Y$ is a linear weak*-to-weak continuous operator;
	\item $G:X\rightarrow[0,+\infty]$ is a convex, weak* lower semi-continuous and positively 1-homogeneous functional, i.e. $G(\lambda u)=\lambda G(u)$ for every $ \lambda\geqslant 0$.
\end{itemize} 
We denote the duality pairing between $\eta \in X_*$ and $u \in X$ by $\langle \eta, u\rangle$.
Since the linear operator $K: X \rightarrow Y$ is weak*-to-weak continuous, there exists a linear continuous operator $K_*: Y \rightarrow X_*$, that is the \emph{pre-adjoint} of $K$ \cite[Remark 3.2]{pikka}. In particular, it holds that
\begin{align}
	\langle K_* y,u \rangle =(Ku, y)  \quad \forall y \in Y,~u \in X.
\end{align}
Moreover, the existence of such a continuous pre-adjoint $K_*$ implies the strong-to-strong continuity of the operator $K$. 
We make the following additional assumptions on $G$.
\begin{assumption}[Assumptions on $G$] \label{eq:assG} 
The following assumptions on $G$ hold:
\begin{enumerate}
    \item[$(1)$] The sublevel set
	\begin{equation}\label{sublevel}
	    S^{-}({G}, \alpha):=\{u \in X : {G}(u) \leqslant\alpha\}
	\end{equation}
	is weak* compact for every $\alpha \geqslant 0$.
 \item[$(2)$] $0$ is an \emph{interior point} of $\partial G(0)$.
\end{enumerate}
\end{assumption}
\begin{remark}
Note that item $(2)$ in Assumption \ref{eq:assG} is only required to ensure strong duality for the minimization problem and consequently the validity of standard optimality conditions. Therefore, independently of the validity of $(2)$, all the results of this paper would carry through if strong duality and optimality conditions hold.      
\end{remark} 
We set $B=S^{-}(G, 1)$ and we give the following definition of \emph{extreme points} of $B$.
 \begin{definition}[Extreme points]
      An element $u \in B$ is called an extreme point of $B$ if for every $u_1, u_2 \in B, s \in$ $(0,1)$ with $u=(1-s) u_1+s u_2$, then $u_1=u_2=u$.
 \end{definition}
 The set of all extreme points of $B$ is denoted as $\operatorname{Ext}(B)$.  By the Krein-Milman theorem, which applies because $B$ is weak* compact, non-empty, and convex due to the assumptions on $G$, we conclude that $\operatorname{Ext}(B) \neq \emptyset$. Let $\mathcal{B}:=\overline{\operatorname{Ext}(B)}^*$. Since the predual space $X_*$ is separable and $\mathcal{B}$ is weak* compact, there exists a metric $d_{\mathcal{B}}$ \emph{metrizing} the weak* convergence on $\mathcal{B}$. In other words, for any sequences $\left(u_k\right)_{k\in\NN}$ in $\mathcal{B}$ and $u \in \mathcal{B}$, we have:
  \begin{equation}\label{metricdB}
u_k \stackrel{*}{\rightharpoonup} u \Longleftrightarrow\lim _{k \rightarrow \infty} d_{\mathcal{B}}\left(u_k, u\right)=0 \text {. }
\end{equation}
In particular, we know that $\left(\mathcal{B}, d_{\mathcal{B}}\right)$ is a compact separable metric space.
\begin{remark}\label{remarkcalB}
Note that, since $B=\overline{B}^*$, we have that
\begin{align*}
   \operatorname{Ext}(B) \subset \mathcal{B} \subset B. 
\end{align*}
\end{remark}

\begin{assumption}[Assumption on $K$]\label{eq:assK}
We make the following additional assumption on $K$:
\begin{enumerate}
    \item The operator $K: X \rightarrow Y$ is sequentially weak*-to-strong continuous in
\begin{align*}
\operatorname{dom}(G):=\{u \in X : G(u)<\infty\},
\end{align*}
i.e., for any sequence $\left(u_k\right)_{k\in\NN}$ in $\operatorname{dom}(G)$ such that $u_k \stackrel{*}{\weakarrow} u$ for some $u \in X$, it holds that $K u_k \rightarrow K u$ in $Y$.
\end{enumerate}
\end{assumption}
Thanks to \emph{Choquet Theorem} \cite[page 14]{phelps} we are able to work with measures on the metric space $\mathcal{B}$
instead of considering $u\in X$.
Let us introduce some notations and definitions of measures on metric spaces. Denoting the space of real-valued bounded continuous functions over $\mathcal{B}$ as $C(\mathcal{B})$, we endow it with the supremum norm
\begin{equation}\label{continuousnorm}
\|f\|_{C(\mathcal{B})}:=\max _{u \in \mathcal{B}}|f(u)|,
\end{equation}
which transforms it into a Banach space. In line with the definitions presented in \cite{afp}, we denote $\Pi$ as the $\sigma$-algebra of Borel sets on $\mathcal{B}$ with respect to the topology induced by $d_{\mathcal{B}}$.
A \emph{finite Radon measure} on $\mathcal{B}$ is a $\sigma$-additive mapping $\mu: \Pi \rightarrow \mathbb{R}$, and we classify $\mu$ as positive if $\mu(E) \in [0, +\infty)$ holds
for every $E \in \Pi$.
Given a finite Radon measure $\mu$, its total variation measure $|\mu|$ is defined as 
\begin{align}\label{totalvariationmeasure}
|\mu|(E):=\sup \left\{\int_{E}\varphi(x)\dm\mu(x) : \varphi\in C(\calB),\,\|\varphi\|_{C(\mathcal{B})}\leqslant 1\right\}\in [0,+\infty)\quad \forall E\in \Pi.
\end{align}
The set of finite Radon measures over $\mathcal{B}$ is a vector space denoted by $M(\mathcal{B})$, which turns into a Banach space when equipped with the following total variation norm
\begin{align}\label{totalvariation}
\|\mu\|_{M(\mathcal{B})}:=|\mu|(\mathcal{B}) .
\end{align}
We note that $M(\mathcal{B})$ endowed with its weak* topology is a locally convex space, with its pre-dual being precisely $C(\mathcal{B})$ with the norm \eqref{continuousnorm}. The duality pairing will be referred to as $\langle\cdot, \cdot\rangle_M$. Additionally, we denote $M^{+}(\mathcal{B})$ as the set of all positive finite Radon measures on $\mathcal{B}$.
\begin{definition}[Support of a measure]
    The support of a measure $\mu \in M(\mathcal{B})$ is defined as 
   \begin{equation}\label{supportmeasure}
       \text{supp }\mu:=\overline{\{ v\in \calB : \forall \varepsilon>0, |\mu|(B_{\varepsilon}(v))>0 \}},
   \end{equation}
   where $B_{\varepsilon}(v):=\left\{w \in \calB: d_{\mathcal{B}}\left(v, w\right) \leqslant \varepsilon\right\}$.
\end{definition}
\begin{definition}[Barycenter]
We say that a measure $\mu \in M^{+}(\mathcal{B})$ represents $u \in X$ if
\begin{equation}\label{barycenter}
    \langle \eta, u\rangle=\int_{\mathcal{B}}\langle \eta, v\rangle \mathrm{d} \mu(v) \quad \forall\eta \in X_*.
\end{equation}
An element $u \in X$ such that \eqref{barycenter} holds is also called the \emph{barycenter} of $\mu$ in $\mathcal{B}$.
\end{definition}
Finally, for the convenience of the reader, we give the following definitions.
\begin{definition}[Choquet set]\label{CS}
    Given $u \in X$, we define a Choquet set as follows:
    \begin{equation}
        C_{u}:=\{\mu \in M^{+}(\mathcal{B}) :   \mu \text{ represents } u \text{ and } G(u)=\|\mu\|_{M(\mathcal{B})}\}.
    \end{equation}
\end{definition}
\begin{remark}\label{existmeasure}
Thanks to \cite[Proposition 5.2]{LinGen}, that is based on Choquet Theorem \cite[page 14]{phelps}, we have that each $u \in B$ is the barycenter of at least one measure $\mu \in M^{+}(\mathcal{B})$ with $G(u)=\|\mu\|_{M(\mathcal{B})}$. Therefore, the set $C_u$ is non-empty.  
\end{remark}
\begin{definition}[Unique Representation]\label{unirep}
    We say that $u \in X$ is uniquely representable if and only if there exists only one $\mu\in C_u$.
\end{definition}
\section{Minimization Problems}\label{SecMinProb}
Given an observation $y_0=Ku_0\in Y $ for some $u_0\in {\rm dom}(G)$, we aim at reconstructing the data $u_0$ from the measurement $y_0$, by solving either the soft-constrained problem
\begin{align} \label{def:minprobsoft}
	\inf_{u\in X} \frac{1}{2}\|Ku-y_0\|_Y^2+ \lambda G(u)  \tag{$\mathcal{P}_{\lambda}(y_0)$},
\end{align}
or the hard-constrained problem
\begin{align} \label{def:minprobhard}
	\inf_{u\in X: Ku=y_0} G(u)  \tag{$\mathcal{P}_{h}(y_0)$}.
\end{align}
If the observation is noisy we want to reconstruct $u_0$ by solving 
\begin{align} \label{def:minprobsoftnoise}
	\inf_{u\in X} \frac{1}{2}\|Ku-y_0-w\|_Y^2+ \lambda G(u)  \tag{$\mathcal{P}_{\lambda}(y_0+w)$},
\end{align}
where $w\in Y $ is the noise and $\lambda>0$ is a well-chosen value.
Thanks to our initial assumptions, solutions exist for both \refeq{def:minprobhard} and \refeq{def:minprobsoft} ({analogously for \ref{def:minprobsoftnoise}) through the direct methods of the calculus of variations.
In particular, the existence of minimizers for \ref{def:minprobsoft} follows from the weak* compactness of the sublevel sets of $G$ (as indicated by $(1)$ in Assumption \ref{eq:assG}) and the weak* lower semi-continuity of $G$, combined with
the weak*-to-weak continuity of $K$, and the convexity and continuity of $\|\cdot\|_Y^2$.
The existence of minimizers for \ref{def:minprobhard} follows by analogous reasoning, noticing that $u_0 \in {\rm dom}(G)$ and that the constraint $Ku = y_0$ is closed under weak* convergence, due to the weak*-to-weak continuity of $K$.


\subsection{Duality theory and optimality conditions}

In this section, we introduce a useful tool for the study of our minimization problem, which is the associated dual problem.
The \emph{Fenchel dual problem} associated with \ref{def:minprobsoft} is given by (see for instance  \cite[Remark 4.2, Chapter III]{Temam})
\begin{equation}\label{def:dualminprobsoft}
\sup_{p \in Y: K_*p \in \partial G(0)} \lambda (y_0,p)-\frac{\lambda^2}{2}\|p\|_Y^2.
\tag{$\mathcal{D}_{\lambda}(y_0)$}
\end{equation}

Under our assumptions, we can prove strong duality and provide suitable optimality conditions for  \ref{def:minprobsoft} and \ref{def:dualminprobsoft}. 
\begin{proposition}\label{propsd1}
    The strong duality between \ref{def:minprobsoft} and \refeq{def:dualminprobsoft} holds, namely 
    \begin{align}\label{sd1}
      \sup_{p \in Y: K_*p \in \partial G(0)} \lambda (y_0,p)-\frac{\lambda^2}{2}\|p\|_Y^2   =   \min_{u\in X} \frac{1}{2}\|Ku-y_0\|_Y^2+ \lambda G(u).
    \end{align}
 Moreover, the existence of $u_{\lambda} \in X$ solution to \ref{def:minprobsoft} and $p_{\lambda} \in Y$ solution to \ref{def:dualminprobsoft},
is equivalent to the following \emph{optimality conditions}: 
\begin{equation}\label{oc2}
    \left\{\begin{aligned}
       K_* p_{\lambda}&\in \partial G(u_{\lambda}),\\
        -p_{\lambda}& =  \frac{1}{\lambda}( Ku_{\lambda}-y_0).
    \end{aligned} \right.
\end{equation}
\end{proposition}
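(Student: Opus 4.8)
The plan is to split the statement into three pieces: an elementary weak-duality inequality, the identification of its equality case with the optimality system \eqref{oc2}, and the closing of the duality gap (the only genuinely non-elementary point). Set $\Phi(u):=\frac12\|Ku-y_0\|_Y^2+\lambda G(u)$ and, for $p\in Y$ with $K_*p\in\partial G(0)$, $\Psi(p):=\lambda(y_0,p)-\frac{\lambda^2}{2}\|p\|_Y^2$. First I would prove $\Phi(u)\geqslant\Psi(p)$ for all such $u,p$. Applying the polarization identity $\frac12\|a\|_Y^2+\frac12\|b\|_Y^2=\frac12\|a+b\|_Y^2-(a,b)$ with $a=Ku-y_0$ and $b=\lambda p$ gives $\frac12\|Ku-y_0\|_Y^2+\frac{\lambda^2}{2}\|p\|_Y^2\geqslant-(Ku-y_0,\lambda p)$, with equality iff $Ku-y_0+\lambda p=0$; substituting into $\Phi(u)-\Psi(p)$ and using $(Ku,p)=\langle K_*p,u\rangle$ reduces it to $\Phi(u)-\Psi(p)\geqslant\lambda\big(G(u)-\langle K_*p,u\rangle\big)\geqslant0$, where the last inequality holds because $K_*p\in\partial G(0)$ means $\langle K_*p,v\rangle\leqslant G(v)$ for all $v\in X$ (by positive $1$-homogeneity of $G$). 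Thus $\inf_u\Phi(u)\geqslant\sup\Psi$, and an admissible pair $(u_\lambda,p_\lambda)$ with $\Phi(u_\lambda)=\Psi(p_\lambda)$ must saturate both steps, i.e. $-p_\lambda=\frac1\lambda(Ku_\lambda-y_0)$ and $G(u_\lambda)=\langle K_*p_\lambda,u_\lambda\rangle$.

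Next I would record the elementary characterization that, for convex positively $1$-homogeneous $G$, one has $\xi\in\partial G(u)$ if and only if $\xi\in\partial G(0)$ and $\langle\xi,u\rangle=G(u)$: testing the subgradient inequality at $v=2u$ and at $v=0$ forces $\langle\xi,u\rangle=G(u)$ and then $G(v)\geqslant\langle\xi,v\rangle$ for all $v$, and the converse is immediate. Applied with $\xi=K_*p_\lambda$, this turns the two saturation conditions above into exactly \eqref{oc2}. Together with weak duality, $\Phi(u_\lambda)=\Psi(p_\lambda)$ then forces $u_\lambda$ to minimize $\Phi$ and $p_\lambda$ to maximize $\Psi$; conversely, once no duality gap is known, any primal and dual optimizers necessarily have equal objective values. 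This proves the asserted equivalence between \eqref{oc2} and the joint existence of a primal and a dual solution, conditional on strong duality.

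To close the gap and obtain a dual maximizer I would apply the Fenchel--Rockafellar duality theorem (e.g. \cite[Chapter III]{Temam}) to $F=\lambda G$, $H=\frac12\|\cdot-y_0\|_Y^2$ and the weak*-to-weak continuous operator $K$, in the pairing $\langle X,X_*\rangle$ with $X$ equipped with its weak* topology, so that the adjoint of $K$ is the pre-adjoint $K_*$. Since $G$ is positively $1$-homogeneous, $(\lambda G)^*$ is the indicator function of $\lambda\partial G(0)$, which after the substitution $p\mapsto\lambda p$ reproduces exactly the constraint and the objective of $\Psi$. The constraint qualification required by the theorem is guaranteed by the standing assumptions --- in particular, as highlighted in the remark following Assumption \ref{eq:assG}, item $(2)$ (the interiority $0\in\operatorname{int}(\partial G(0))$, equivalently the coercivity $G(u)\geqslant c\|u\|_X$ for some $c>0$) is the ingredient ensuring strong duality --- so that $\inf_u\Phi(u)=\sup\Psi$ and the supremum is attained by some $p_\lambda\in Y$. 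Since a minimizer $u_\lambda$ of $\Phi$ exists by the direct method already recalled, feeding this pair into the previous step produces \eqref{oc2}.

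I expect this last step to be the main obstacle. Because $X$ is only the dual of a separable Banach space $X_*$ and $G$ is merely weak*-lower semicontinuous, one cannot simply cite a finite-dimensional Fenchel duality result; one has to choose the dual pairing carefully so that the dual supremum is attained within $Y$ itself and not in some larger space, and this is exactly where Assumption \ref{eq:assG}$(2)$ is used. By contrast, the weak-duality inequality and the passage through the subdifferential of the $1$-homogeneous functional $G$ are routine.
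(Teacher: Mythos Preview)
Your argument is correct and follows a somewhat different route from the paper. You first establish weak duality and the characterization of its equality case by hand (via the polarization identity and the well-known description of $\partial G(u)$ for positively $1$-homogeneous $G$), and only then invoke Fenchel--Rockafellar to close the gap; the paper instead applies Temam's duality result from the outset and reads off both strong duality and the extremality relations directly, then simplifies the second relation by completing the square. Your approach has the merit of making the optimality conditions transparent without any abstract machinery, at the cost of still needing the duality theorem for the gap.

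One correction concerns the constraint qualification. For \emph{this} proposition Assumption~\ref{eq:assG}(2) is not used, and your sentence ``this is exactly where Assumption~\ref{eq:assG}(2) is used'' is a misattribution. The qualification that makes Fenchel--Rockafellar apply to $\min_u \lambda G(u)+H(Ku)$ with $H(w)=\tfrac12\|w-y_0\|_Y^2$ is simply that $H$ is finite and continuous at $K0=0\in Y$ while $G(0)=0<\infty$; this is precisely what the paper invokes. Since $H$ is a continuous quadratic on the Hilbert space $Y$, the dual supremum is automatically attained in $Y$ and no coercivity or interiority hypothesis on $G$ is needed. The interiority condition $0\in\operatorname{int}(\partial G(0))$ enters only in Proposition~\ref{propsd2} (the hard-constrained problem), where it makes $\chi_{\partial G(0)}$ continuous at $0$ and thereby supplies the qualification on the $G$-side. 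So the ``main obstacle'' you anticipated is in fact absent for the soft-constrained problem.
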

\begin{proof}
Note that, since $G(0) = 0 < \infty$ and the function $w \mapsto \|w - y_0\|_Y^2$ is continuous in $Y$ for every $w \in Y$, we can apply \cite[Remark 4.2, Chapter III]{Temam} and write that
\begin{align}
    \min_{u\in X} \frac{1}{2}\|Ku-y_0\|_Y^2+ \lambda G(u) = \sup_{q \in Y} \,(y_0,q)-\frac{\|q\|_Y^2}{2}- \lambda G^*\left(\frac{K_*q}{\lambda}\right).
\end{align}
Since $G$ is positively $1$-homogeneous, it is a standard fact that its Fenchel conjugate is the characteristic function of $\partial G(0)$, denoted as $ \chi_{\partial G(0)}$. Therefore, with the rescaling argument $q/\lambda=p$, we obtain \eqref{sd1}:
\begin{align}
    \min_{u\in X} \frac{1}{2}\|Ku-y_0\|_Y^2+ \lambda G(u) & = \sup_{q \in Y} \,(y_0,q)-\frac{\|q\|_Y^2}{2}- \lambda \chi_{\partial G(0)}\left(\frac{K_*q}{\lambda}\right)\\
    & =\sup_{q \in Y : \frac{K_* q}{\lambda} \in \partial G(0)} \, (y_0,q)-\frac{\|q\|_Y^2}{2}\\
    & = \sup_{p \in Y : K_* p \in \partial G(0)} \, \lambda(y_0,p)-\lambda^2\frac{\|p\|_Y^2}{2}.
\end{align}
 From \cite[Remark 4.2, Chapter III]{Temam}, we get that $u_\lambda\in X$ is a solution to  \ref{def:minprobsoft} and $q_{\lambda} \in Y$ is a solution to
\begin{align}
\sup_{q \in Y: \frac{K_*q}{\lambda} \in \partial G(0)} (y_0,q)-\frac{\|q\|_Y^2}{2}
\end{align}
if and only if 
\begin{equation}
    \left\{\begin{aligned}
       &\frac{K_* q_{\lambda}}{\lambda}
    \in \partial G(u_{\lambda}),\\
        &\frac{1}{2}\|Ku_{\lambda}-y_0\|_Y^2-(y_0,q_{\lambda})+\frac{\|q_{\lambda}\|_Y^2}{2} +(q_{\lambda},Ku_{\lambda})=0.
    \end{aligned} \right.
\end{equation}
The second optimality condition becomes:
\begin{align}
    \begin{aligned}
        &\|Ku_{\lambda}-y_0\|_Y^2-2(y_0,q_{\lambda})+\|q_{\lambda}\|_Y^2  +2(q_{\lambda},Ku_{\lambda})\\
        &=(Ku_{\lambda}-y_0, Ku_{\lambda}-y_0)+2(Ku_{\lambda}-y_0,q_{\lambda})+(q_{\lambda},q_{\lambda})  \\
        &=(Ku_{\lambda}-y_0,Ku_{\lambda}-y_0+q_{\lambda})+(Ku_{\lambda}-y_0+q_{\lambda},q_{\lambda})\\
        &=\|Ku_{\lambda}-y_0+q_{\lambda}\|_Y^2=0\Leftrightarrow Ku_{\lambda}-y_0+q_{\lambda}=0.
    \end{aligned}
\end{align}
Therefore, we obtain the following optimality conditions:
\begin{equation}
    \left\{\begin{aligned}
       \frac{K_* q_{\lambda}}{\lambda}
    &\in \partial G(u_{\lambda}),\\
       -q_{\lambda}&=Ku_{\lambda}-y_0.
    \end{aligned} \right.
\end{equation}
Finally, by setting $p_\lambda = q_\lambda/\lambda$, we obtain the desired optimality conditions \eqref{oc2} for $p_{\lambda}\in Y$ solving the problem \ref{def:dualminprobsoft}.
\end{proof}
Note that, finding $\eta_{\lambda}\in \partial G(u_\lambda)$ such that $\eta_\lambda = K_*p_{\lambda}= -\frac{1}{\lambda}K_* (Ku_{\lambda}-y_0)$ gives a proof that $u_{\lambda}$ is a solution to \ref{def:minprobsoft}. As a result, we name $\eta_{\lambda}$ a \emph{dual certificate} for $u_{\lambda}$ (following the definition proposed in \cite{Peyre}).
Moreover, observe that \ref{def:dualminprobsoft} shares the same minimizers with the minimization problem
\begin{align} \label{def:dualminprobsoftref}
\min_{p \in Y: K_*p \in \partial G(0)}\left\|\frac{y_0}{\lambda}-p\right\|_Y^2 \tag{$\mathcal{D}_{\lambda}'(y_0)$}.
\end{align}
In particular, note that $\{p \in Y : K_*p \in \partial G(0)\}$ is weak closed since $K_*$ is weak-to-weak continuous. Therefore, due to the weak lower semi-continuity of $\|\cdot\|_Y$, the problem \ref{def:dualminprobsoftref}, and thus \ref{def:dualminprobsoft} as well, always admits a minimizer. Finally, since \refeq{def:dualminprobsoftref} is the projection of $y_0/\lambda$ onto the closed convex set $\{p \in Y: K_*p \in \partial G(0)\}$, 
such solution is also unique and it will be denoted by $p_\lambda$.
\begin{remark}
   Note that the same argument applies to the problem with noise, as defined in \ref{def:minprobsoftnoise}. In this context, the associated dual problem becomes: \begin{equation}\label{def:dualminprobsoftnoise}
\sup_{p \in Y: K_*p \in \partial G(0)} \lambda (y_0+w,p)-\frac{\lambda^2}{2}\|p\|_Y^2.
\tag{$\mathcal{D}_{\lambda}(y_0+w)$}
\end{equation}
 In particular, the strong duality \eqref{sd1} remains valid with $y_0+w$ instead of $y_0$. Moreover, the existence of $\tilde u_{\lambda}$ solution to \ref{def:minprobsoftnoise} and $\tilde p_{\lambda}$ solution to \refeq{def:dualminprobsoftnoise}, is equivalent to the following \emph{optimality conditions}:
\begin{equation}\label{ocnoise}
 \left\{\begin{aligned}
 K_* \tilde p_{\lambda}&\in \partial G(\tilde u_{\lambda}),\\
 -\tilde p_{\lambda}& =  \frac{1}{\lambda}( K\tilde u_{\lambda}-y_0-w).
 \end{aligned} \right.
\end{equation}
Similarly to the noiseless case, a unique solution $\tilde p_\lambda$ exists always for \ref{def:dualminprobsoftnoise} and we denote as $\tilde \eta_\lambda = K_*\tilde p_{\lambda}$ the dual certificate for $\tilde u_{\lambda}$.
\end{remark}


If we look instead at the problem  with a hard-constrained \ref{def:minprobhard}, its dual counterpart is 
\begin{align} \label{def:dualminprobhard}
\sup _{p \in Y: K_* p \in \partial G(0)}   (y_0,p)  
\tag{$\mathcal{D}_{h}(y_0)$}.
\end{align}
Let us proceed to analyze the strong duality and the optimality conditions for \ref{def:minprobhard} and \ref{def:dualminprobhard}.
\begin{proposition}\label{propsd2}
    The strong duality between \ref{def:minprobhard} and \ref{def:dualminprobhard} holds, namely
    \begin{align}\label{sd2}
	\min_{u \in X:\, Ku=y_0} G(u) = \sup_{p \in Y: K_* p \in \partial G(0)}   (y_0,p).
\end{align}  
Moreover, the existence of $u_0 \in X$ solution to \ref{def:minprobhard} and $p_0 \in Y$ solution to \ref{def:dualminprobhard}, is equivalent to the following \emph{optimality conditions}:
    \begin{equation}\label{oc3}
     \left\{\begin{aligned}
       K_* p_{0}
    &\in \partial G(u_{0}),\\
        Ku_0&= y_0. 
    \end{aligned} \right.
\end{equation}
\end{proposition}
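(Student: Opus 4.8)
The plan is to handle Proposition~\ref{propsd2} by the same Fenchel--Rockafellar scheme used for Proposition~\ref{propsd1}, the new difficulty being that the penalty term $\chi_{\{y_0\}}$ attached to the hard constraint is nowhere continuous, so the constraint qualification must be supplied by hand and the real content becomes the \emph{absence of a duality gap}. First I would rewrite \ref{def:minprobhard} as $\inf_{u\in X}\,[\,G(u)+\chi_{\{y_0\}}(Ku)\,]$ and, using that $G$ is positively $1$-homogeneous and weak* lsc so that $G^{*}=\chi_{\partial G(0)}$ (as recalled in the proof of Proposition~\ref{propsd1}), identify its Fenchel dual as precisely \ref{def:dualminprobhard}. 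Weak duality $\sup_{p:\,K_{*}p\in\partial G(0)}(y_0,p)\leqslant\min_{u:\,Ku=y_0}G(u)$ is then immediate: if $Ku=y_0$ and $K_{*}p\in\partial G(0)$ then $(y_0,p)=(Ku,p)=\langle K_{*}p,u\rangle\leqslant G(u)$, since $\partial G(0)=\{\eta\in X_{*}:\langle\eta,v\rangle\leqslant G(v)\ \forall v\in X\}$ by $1$-homogeneity.

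The core step, and the main obstacle, is the reverse inequality. I would obtain it through the value function $v(z):=\inf\{G(u):Ku=y_0+z\}$, which is convex, nonnegative, and proper because $v(0)=G(u_0)<\infty$; standard convex duality identifies the value of \ref{def:dualminprobhard} with $v^{**}(0)$ and that of \ref{def:minprobhard} with $v(0)$, so \eqref{sd2} is equivalent to lower semicontinuity of $v$ at $0$. To prove that, take $z_k\to0$ with $v(z_k)$ bounded, choose $u_k$ with $Ku_k=y_0+z_k$ and $G(u_k)\leqslant v(z_k)+1/k$; Assumption~\ref{eq:assG}$(1)$ (weak* compactness of the sublevel sets, hence their boundedness, together with weak* metrizability of bounded subsets of $X$ coming from separability of $X_{*}$) extracts a subsequence $u_k\weakstar\bar u$, the weak*-to-weak continuity of $K$ forces $K\bar u=y_0$, and the weak* lower semicontinuity of $G$ gives $G(\bar u)\leqslant\liminf_k G(u_k)$; hence $v(0)\leqslant G(\bar u)\leqslant\liminf_k v(z_k)$, the desired semicontinuity. (Equivalently one may let $\lambda\to0^{+}$ in Proposition~\ref{propsd1}: if $u_\lambda$ solves \ref{def:minprobsoft}, the quantity $\tfrac1{2\lambda}\|Ku_\lambda-y_0\|_Y^{2}+G(u_\lambda)$ is trapped between $G(u_\lambda)$ and the value of \ref{def:dualminprobhard}, while $G(u_\lambda)\to G(u_0)$ by the same compactness argument, so the two values are squeezed together.) Assumption~\ref{eq:assG}$(2)$ — equivalently $G(u)\geqslant\rho\|u\|_X$ for some $\rho>0$ — is what enters here as the qualification legitimizing the scheme.

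Granting \eqref{sd2}, the equivalence with the optimality conditions \eqref{oc3} is routine Fenchel calculus. I would first note that, since $G$ is $1$-homogeneous, $\partial G(u_0)\subseteq\partial G(0)$ and every $\eta\in\partial G(u_0)$ obeys $\langle\eta,u_0\rangle=G(u_0)$, so the two lines of \eqref{oc3} already contain dual feasibility of $p_0$ and primal feasibility of $u_0$. If \eqref{oc3} holds, then $G(u_0)=\langle K_{*}p_0,u_0\rangle=(p_0,Ku_0)=(p_0,y_0)$ while $G^{*}(K_{*}p_0)=0$, so the value $(p_0,y_0)=G(u_0)$ lies between the dual and the primal values and, by weak duality and \eqref{sd2}, both $u_0$ and $p_0$ are optimal. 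Conversely, if $u_0$ solves \ref{def:minprobhard} and $p_0$ solves \ref{def:dualminprobhard}, then $Ku_0=y_0$ and $K_{*}p_0\in\partial G(0)$ by feasibility, and \eqref{sd2} gives $G(u_0)=(p_0,y_0)=(p_0,Ku_0)=\langle K_{*}p_0,u_0\rangle$, i.e. the Fenchel equality $G(u_0)+G^{*}(K_{*}p_0)=\langle K_{*}p_0,u_0\rangle$, which is precisely $K_{*}p_0\in\partial G(u_0)$.
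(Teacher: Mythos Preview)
Your argument is correct, but the route to strong duality is genuinely different from the paper's. The paper does \emph{not} argue via the perturbation function; instead it flips the roles of primal and dual, writing
\[
-\sup_{p:\,K_*p\in\partial G(0)}(y_0,p)=\min_{p\in Y}\big[-(y_0,p)+\chi_{\partial G(0)}(K_*p)\big],
\]
and applies the Fenchel--Rockafellar theorem (\cite[Remark 4.2, Chapter III]{Temam}) to \emph{this} minimization problem. The constraint qualification there is continuity of $\chi_{\partial G(0)}$ at $K_*0=0$, which is precisely Assumption~\ref{eq:assG}$(2)$ ($0$ interior to $\partial G(0)$). The dual of this auxiliary problem, after the substitution $u=-w$, is \ref{def:minprobhard}, and strong duality plus the optimality conditions drop out of the same reference.

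Your approach, by contrast, establishes lower semicontinuity of the value function $v(z)=\inf\{G(u):Ku=y_0+z\}$ at $0$ directly from weak* compactness of the sublevel sets (Assumption~\ref{eq:assG}$(1)$) and weak*-to-weak continuity of $K$, and then reads off $v(0)=v^{**}(0)$. This is a sound and self-contained argument; note, however, that your parenthetical remark attributing the qualification to Assumption~\ref{eq:assG}$(2)$ is misplaced---your compactness argument uses only Assumption~\ref{eq:assG}$(1)$, so in fact you have proved \eqref{sd2} under a slightly weaker hypothesis than the paper invokes. The treatment of the optimality conditions via the Fenchel equality $G(u_0)+G^*(K_*p_0)=\langle K_*p_0,u_0\rangle$ is essentially the same in both proofs.
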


\begin{proof}
   Consider the problem
   \begin{align}
   -\sup _{p \in Y: K_* p \in \partial G(0)}   (y_0,p) = \min_{p \in Y} -  (y_0,p) + \chi_{\partial G(0)}(K_* p).
   \end{align} 
Since $0$ is an interior point of $\partial G(0)$, we know that $\chi_{\partial G(0)}$ is continuous at zero in $X_*$. Therefore, we can apply \cite[Remark 4.2, Chapter III]{Temam} to get that
\begin{align}
    \min_{p \in Y} -  (y_0,p) + \chi_{\partial G(0)}(K_* p) = \sup_{w \in X} -\chi_{\{Kw = -y_0\}}(w) - G(-w)=-\min_{w \in X: Kw=-y_0}G(-w).
\end{align}
If we apply the change of variable $u=-w$, we obtain: 
\begin{align}
    -\sup _{p \in Y: K_* p \in \partial G(0)}   (y_0,p)=  \sup_{u \in X} -\chi_{\{Ku = y_0\}}(u) - G(u)= \sup_{u \in X: Ku=y_0} - G(u)=-\min_{u \in X: Ku=y_0}G(u),
\end{align}
which is equivalent to \eqref{sd2}.
Once again, by \cite[Remark 4.2, Chapter III]{Temam}, we establish that $p_0\in Y$ is a solution to \ref{def:dualminprobhard} and $w_0\in X$ is a solution to
\begin{equation}
     \min_{p \in Y} -  (y_0,p) + \chi_{\partial G(0)}(K_* p)
\end{equation}
if and only if 
\begin{align}\label{cor5.2}
     \left\{\begin{aligned}
      &-(y_0,p_0)+\chi_{\{Kw_0 = -y_0\}}-( Kw_{0}, p_0)
    =0\Leftrightarrow Kw_0=-y_0,\\
        &-w_{0} \in\partial \chi_{\partial G(0)}(K_* p_0). 
    \end{aligned} \right.
\end{align}
Thanks to \cite[Corollary 5.2, Chapter I]{Temam}, the second optimality condition in \eqref{cor5.2} is equivalently expressed as
\begin{align}
   K_*p_0\in\partial G(-w_0).
\end{align}
By setting $u_0=-w_0$, we obtain:
\begin{align}
    \left\{\begin{aligned}
      Ku_0&=y_0,\\
         K_*p_0&\in\partial G(u_0),
    \end{aligned} \right.
\end{align}
which are the desired optimality conditions \eqref{oc3} for $u_0\in X$ solving the problem \ref{def:minprobhard}.
\end{proof}

Considering the problem \ref{def:dualminprobhard}, it is important to note that we do not know if a solution exists. Therefore, in the following, we will proceed with the \emph{assumption} that a solution does indeed exist until we define the Metric Non-Degenerate Source Condition. 
Moreover,  we keep using a similar notation for the dual certificate associated with \ref{def:minprobhard}, denoted as $\eta_0=K_*p_0$.
In general, dual certificates for \ref{def:minprobhard} are not unique. Therefore, for the upcoming analysis and following the same approach as in \cite{Peyre}, we will consider the dual certificate that possesses the minimal norm in the Hilbert space $Y$.
\begin{definition}[Minimal-norm dual certificate]\label{minimalnorm} 
	The minimal-norm dual certificate associated with \ref{def:minprobhard} (when it exists) is defined as $\eta_0=K_* p_0$, where $p_0 \in$ $Y$ is the unique solution to \ref{def:dualminprobhard} with minimal $\|\cdot\|_Y$ norm. In other words:
	\begin{align*}
    p_0={\operatorname{argmin}}\left\{ \|p\|_Y : p \in Y \text { is a solution to }\text{\ref{def:dualminprobhard}} \right\}.
		\end{align*}
\end{definition}
Now, if a solution to \ref{def:dualminprobhard} exists, the unique solution to \ref{def:dualminprobsoft} converges strongly to the minimal-norm solution to \ref{def:dualminprobhard} as $\lambda \rightarrow 0$. This is stated in the following proposition. Its proof follows similar reasoning as \cite[Proposition 1]{Peyre}.
\begin{proposition}[Convergence of the dual solution]\label{ConvergenceDual}
	 Let $p_\lambda$ be the unique solution to Problem \ref{def:dualminprobsoft}. Suppose that a solution to \ref{def:dualminprobhard} exists and let $p_0$ be the solution to  \ref{def:dualminprobhard} with minimal-norm as defined in Definition \ref{minimalnorm}. Then,
  \begin{align}
    \lim_{\lambda\rightarrow 0^+} \|p_\lambda - p_0\|_Y = 0.
  \end{align}
\end{proposition}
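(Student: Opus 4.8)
The plan is to mimic the classical argument for convergence of minimal-norm dual certificates, exploiting that $p_\lambda$ is the projection of $y_0/\lambda$ onto the fixed closed convex set $\calC := \{p \in Y : K_*p \in \partial G(0)\}$. Throughout, recall from Proposition~\ref{propsd2} and the discussion after it that $\calC$ is nonempty (since a solution to \ref{def:dualminprobhard} is assumed to exist), weakly closed, and convex, and that $p_0 \in \calC$ realizes $\max_{p \in \calC} (y_0,p)$ with minimal norm.

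\textbf{Step 1: A norm bound on $p_\lambda$.} First I would use the strong-duality identity \eqref{sd1} together with optimality of $p_\lambda$ for \ref{def:dualminprobsoft}. Testing the dual functional at $p = p_0 \in \calC$ gives
\begin{align*}
\lambda(y_0,p_\lambda) - \frac{\lambda^2}{2}\|p_\lambda\|_Y^2 \;\geqslant\; \lambda(y_0,p_0) - \frac{\lambda^2}{2}\|p_0\|_Y^2.
\end{align*}
Dividing by $\lambda$ and rearranging yields $(y_0,p_\lambda) - (y_0,p_0) \geqslant \frac{\lambda}{2}(\|p_\lambda\|_Y^2 - \|p_0\|_Y^2)$. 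Since $p_0$ maximizes $(y_0,\cdot)$ over $\calC$ and $p_\lambda \in \calC$, the left-hand side is $\leqslant 0$, hence $\|p_\lambda\|_Y \leqslant \|p_0\|_Y$ for every $\lambda > 0$. In particular the family $\{p_\lambda\}_{\lambda>0}$ is bounded in $Y$.

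\textbf{Step 2: Weak subsequential limits are maximizers.} Take any sequence $\lambda_k \to 0^+$. By Step 1 and reflexivity of $Y$ (it is Hilbert), a subsequence satisfies $p_{\lambda_k} \weak p_*$ weakly in $Y$; since $\calC$ is convex and strongly closed, it is weakly closed, so $p_* \in \calC$. To see $p_*$ maximizes $(y_0,\cdot)$ over $\calC$: for arbitrary $p \in \calC$, optimality of $p_{\lambda_k}$ for \ref{def:dualminprobsoft} gives $(y_0,p_{\lambda_k}) - \frac{\lambda_k}{2}\|p_{\lambda_k}\|_Y^2 \geqslant (y_0,p) - \frac{\lambda_k}{2}\|p\|_Y^2$; letting $k \to \infty$, using $p_{\lambda_k} \weak p_*$ and the uniform bound from Step 1 to kill the $O(\lambda_k)$ terms, we get $(y_0,p_*) \geqslant (y_0,p)$. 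Hence $p_*$ solves \ref{def:dualminprobhard}.

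\textbf{Step 3: Identify the limit as $p_0$ and upgrade to strong convergence.} By weak lower semicontinuity of the norm and Step 1, $\|p_*\|_Y \leqslant \liminf_k \|p_{\lambda_k}\|_Y \leqslant \|p_0\|_Y$. Since $p_*$ is a solution to \ref{def:dualminprobhard} and $p_0$ is \emph{the} minimal-norm solution (unique, as it is the projection of $0$ onto the closed convex solution set, cf. Definition~\ref{minimalnorm}), this forces $p_* = p_0$ and $\|p_{\lambda_k}\|_Y \to \|p_0\|_Y$. Combining weak convergence $p_{\lambda_k} \weak p_0$ with convergence of norms in the Hilbert space $Y$ yields $p_{\lambda_k} \to p_0$ strongly. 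Since the limit $p_0$ is independent of the chosen sequence and subsequence, a standard subsequence argument gives $\|p_\lambda - p_0\|_Y \to 0$ as $\lambda \to 0^+$.

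\textbf{Main obstacle.} There is no deep obstacle here; the argument is routine once strong duality \eqref{sd1} is available. The only point requiring minor care is justifying that the $\frac{\lambda}{2}\|\cdot\|^2$ terms vanish in the limit, which is exactly why the uniform bound of Step 1 must be established \emph{before} passing to the limit in Step 2 — one cannot a priori discard $\frac{\lambda_k}{2}\|p_{\lambda_k}\|_Y^2$ without knowing $\|p_{\lambda_k}\|_Y$ stays bounded. One should also note explicitly that uniqueness of $p_\lambda$ (already recorded in the excerpt, as the projection of $y_0/\lambda$ onto $\calC$) makes the statement about the whole family, rather than subsequences, meaningful.
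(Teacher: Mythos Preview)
Your proof is correct and follows the same overall scheme as the paper: establish the uniform bound $\|p_\lambda\|_Y \leqslant \|p_0\|_Y$, extract a weak subsequential limit, show it solves \ref{def:dualminprobhard}, then use weak lower semicontinuity of the norm plus uniqueness of the minimal-norm solution to upgrade to strong convergence.

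The one genuine difference lies in how you verify that the weak limit $p_*$ solves \ref{def:dualminprobhard}. You stay entirely on the dual side: weak closedness of $\calC$ gives $p_* \in \calC$, and passing to the limit in the dual optimality inequality (using the Step~1 bound to kill the $O(\lambda_k)$ terms) gives $(y_0,p_*) \geqslant (y_0,p)$ for all $p \in \calC$. The paper instead detours through the primal problem: it invokes \cite[Theorem 3.5]{Hofmann} to obtain a subsequence of primal minimizers $u_{\lambda_{n_k}} \weakstar u_0$, then combines this with the weak*-to-strong continuity of $K$ to pass to the limit in the subdifferential inclusion and conclude $K_*\bar p \in \partial G(u_0)$, whence $\bar p$ solves \ref{def:dualminprobhard} via the optimality conditions \eqref{oc3}. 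Your route is more elementary and self-contained, avoiding both the external convergence result and Assumption~\ref{eq:assK} on $K$; the paper's route, while heavier, has the side benefit of simultaneously tracking the primal minimizers.
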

\begin{proof}

 Let $p_\lambda$ be the unique solution to \ref{def:dualminprobsoft} and $p_0$ be the solution to \ref{def:dualminprobhard} with minimal-norm as defined in Definition \ref{minimalnorm}. Since they are solutions, we have:
\begin{equation}\label{m1}
(y_0,p_{\lambda})-\frac{\lambda}{2}\left\|p_\lambda\right\|_Y^2  \geqslant(y_0,p_0)-\frac{\lambda}{2}\left\|p_0\right\|_Y^2,
\end{equation}
\begin{equation}\label{m2}
(y_0,p_0)  \geqslant(y_0,p_{\lambda}),
\end{equation}
where in the first inequality we divided by $\lambda>0$.
This implies that
\begin{equation}\label{m3}
\left\|p_0\right\|_Y^2 \geqslant \left\| p_\lambda\right\|_Y ^2\quad \forall \lambda>0.
\end{equation}
Now, let $\left(\lambda_n\right)_{n \in \mathbb{N}}$ be any sequence of positive parameters converging to 0. Since, by \eqref{m3}, the sequence $p_{\lambda_n}$ is bounded in $Y$,  we may extract a subsequence $(\lambda_{n_{k}})_{k \in \mathbb{N}}$ such that $p_{\lambda_{n_{k}}}\rightharpoonup \Bar{p}$ in $Y$.
Passing to the limit in \eqref{m1} as $k\rightarrow \infty $, we get:
\begin{align*}
(y_0, \Bar{p}) \geqslant(y_0, p_0).
\end{align*}
Moreover, since $K_*$ is a weak-to-weak continuous operator,
\begin{align}\label{weakcont}
    K_* p_{\lambda_{n_k}} \rightharpoonup K_* \Bar{p}\quad \text{ in }X_*.
\end{align}
Let $u_{\lambda}$ be a sequence of minimizers of \ref{def:minprobsoft}. Thanks to \cite[Theorem 3.5]{Hofmann}, we may extract another subsequence, denoted again $(\lambda_{n_{k}})_{k \in \mathbb{N}}$, such that 
\begin{align}\label{weak*cont}
    u_{\lambda_{n_k}}\stackrel{*}{\weakarrow} u_{0} \quad \text{ in } X,
\end{align}
where $u_{0} \in X$ is a minimizer of \ref{def:minprobhard}. Using the Cauchy-Schwarz inequality, the following estimates hold for every $v\in X$:
\begin{align*}
 \begin{aligned}
     | \langle v-u_{\lambda_{n_k}}, K_* p_{\lambda_{n_k}} \rangle-\langle v-u_{0}, K_* \Bar{p} \rangle|&=| \langle u_0-u_{\lambda_{n_k}}, K_* p_{\lambda_{n_k}} \rangle+  \langle v-u_{0}, K_* p_{\lambda_{n_k}} -K_*\bar{p}\rangle|\\
     &= |(K(u_0-u_{\lambda_{n_k}}),  p_{\lambda_{n_k}})+  \langle v-u_{0}, K_* p_{\lambda_{n_k}} -K_*\bar{p}\rangle|\\
     &\leqslant \| K(u_0-u_{\lambda_{n_k}})\|_Y\|p_{\lambda_{n_k}}\|_Y+|\langle v-u_{0}, K_* p_{\lambda_{n_k}} -K_*\bar{p}\rangle|.
 \end{aligned}   
\end{align*}
The first term is going to zero, because $p_{\lambda_{n_k}}$ is bounded in  \eqref{m3} and $K$ is weak*-to-strong continuous in dom($G$), while the second term vanishes, because \eqref{weakcont} holds.
Therefore, as $k\rightarrow\infty$, we obtain:
\begin{align}\label{convsub}
    \langle v-u_{\lambda_{n_k}}, K_* p_{\lambda_{n_k}} \rangle\rightarrow \langle v-u_{0}, K_* \Bar{p} \rangle.
\end{align}
Due to the weak* lower semi-continuity of $G$ and the convergence established in \eqref{convsub}, it holds that
\begin{align}
\langle v-u_0, K_* \Bar{p}\rangle + G(u_0)
\leqslant \liminf_{k\rightarrow \infty} \langle v-u_{\lambda_{n_k}}, K_* p_{\lambda_{n_k}} \rangle + G(u_{\lambda_{n_k}})\leqslant G(v)\quad \forall v\in X,
\end{align}
where the second inequality is equivalent to the optimality conditions \eqref{oc2} (see for instance \cite[5.2, Chapter I]{Temam}). These conditions are satisfied because $p_{\lambda_{n_k}}$ is the unique solution to the problem  \ref{def:dualminprobsoft}. 
This implies that $K_* \Bar{p}\in \partial G(u_0)$, consequently establishing $\Bar{p}$ as a solution to the dual problem \ref{def:dualminprobhard}, thanks to the optimality conditions \eqref{oc3}.
Furthermore, thanks to the weak lower semi-continuity of $\|\cdot\|_{Y}$, the estimate \eqref{m3}, and the definition of minimal-norm dual solution $p_0$, we can readily write that
\begin{align}
\left\|\Bar{p}\right\|_Y \leqslant \liminf _{k\rightarrow \infty}\|p_{\lambda_{n_{k}}}\|_Y \leqslant\left\|p_0\right\|_Y\leqslant \left\|\Bar{p}\right\|_Y.
\end{align}
Therefore, the following norm convergence holds:
\begin{align*}
\lim _{k\rightarrow \infty}\|p_{\lambda_{n_{k}}}\|_Y=\left\|p_0\right\|_Y=\left\|\Bar{p}\right\|_Y.
\end{align*}
Now, since $p_0$ is the unique solution to \ref{def:dualminprobhard} with minimal-norm and $\left\|p_0\right\|_Y=\left\|\Bar{p}\right\|_Y$, we conclude that $p_0=\Bar{p}$.
Since weak convergence, together with convergence in norm, implies strong convergence (see for instance \cite{Evans}), we can conclude that $p_{\lambda_{n_{k}}}$ converges strongly to $p_0$ in the $Y$ topology. 
If this holds for any sequence $\lambda_n \rightarrow 0^{+}$, we obtain the desired result. 
Let us assume by contradiction that there exists $\varepsilon>0$ and a sequence $\lambda_n \rightarrow 0^+$ such that $\|p_0-p_{\lambda_n}\|_Y \geqslant \varepsilon$ for all $n \in \mathbb{N}$.
By repeating the previous argument, we may extract a subsequence $(\lambda_{n_{k}})_{k\in\NN}$ such that $p_{\lambda_{n_{k}}}\stackrel{Y}{\rightarrow} \Bar{p}$. However, this directly contradicts the condition $\|p_0-p_{\lambda_{n_{k}}}\|_Y \geqslant \varepsilon$, implying that  $\ds{\lim _{\lambda \rightarrow 0^+} p_\lambda=p_0}$
holds strongly.
\end{proof}

We can now introduce the notion of extreme critical set.
\begin{definition}[Extreme critical set]\label{def:extendedsupport}
  Let $u_0 \in X$ be such that $y_0 = K u_0$.
  Suppose that a solution to  \ref{def:dualminprobhard} exists, and denote $\eta_0 \in X_*$ as the minimal-norm dual certificate associated with \ref{def:minprobhard}. 
The \emph{extreme critical set} of $u_0$ is defined as follows: 
\begin{equation}\label{Exc}
\text{Exc}(u_0):=\left\{u \in \calB:\left\langle\eta_0, u\right\rangle= 1\right\}.
\end{equation}
\end{definition}
\begin{remark}
   Note that in the setting considered by Duval and Peyré in \cite{Peyre}, the concept of the extreme critical set simplifies to the extended support. In their context, this terminology is particularly fitting as they deal with Radon measures, allowing them to establish a concrete definition of support.
\end{remark}
In the following proposition, we will present an alternative criterion for characterizing $u_0$ as a solution to \ref{def:minprobhard}.  This criterion is expressed about the support of all measures in $M^+(\mathcal{B})$ that belong to the Choquet set $C_{u_0}$, c.f. Definition \ref{CS}. 

	\begin{proposition}\label{prop1}
	    Given $u_0 \in X$ such that $y_0=K u_0$, suppose that there exists a solution to \ref{def:dualminprobhard}. Then, the following two properties hold:
     \begin{enumerate}
         \item[(a)] If $u_0$ is a solution to \ref{def:minprobhard}, then
         $\mathrm{supp}\,\mu_0 \subset \Exc$ for all $\mu_0 \in C_{u_0}$.
          \item[(b)] If there exists $\mu_0 \in C_{u_0}$ such that $\mathrm{supp}\,\mu_0 \subset \Exc $, then $u_0$ is a solution to \ref{def:minprobhard}. 
     \end{enumerate}
	\end{proposition}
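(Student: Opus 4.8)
The plan is to use the optimality conditions \eqref{oc3} together with the barycenter representation \eqref{barycenter} and the characterization of $\partial G(u_0)$. First I would recall that, since $G$ is convex, weak* lower semi-continuous and positively $1$-homogeneous, we have the standard fact that $\eta \in \partial G(u)$ if and only if $\eta \in \partial G(0)$ (equivalently $\langle \eta, v\rangle \leqslant G(v)$ for all $v \in X$) \emph{and} $\langle \eta, u\rangle = G(u)$; moreover $\partial G(0)$ is exactly the polar-type set $\{\eta \in X_* : \langle \eta, v\rangle \leqslant G(v)\ \forall v\}$. In particular, since every extreme point $u \in \mathrm{Ext}(B)$ satisfies $G(u)\leqslant 1$, and since $\mathcal{B}=\overline{\mathrm{Ext}(B)}^*$ with $G$ weak* lsc and $1$-homogeneous so that $G\leqslant 1$ on all of $\mathcal{B}$, for any $\eta \in \partial G(0)$ and any $u \in \mathcal{B}$ we get $\langle \eta, u\rangle \leqslant G(u) \leqslant 1$. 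This is the key pointwise bound that makes the value $1$ in \eqref{Exc} natural.

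For part (a): assume $u_0$ solves \ref{def:minprobhard}. By Proposition \ref{propsd2} (strong duality) a dual solution exists — here it is assumed — and the optimality conditions \eqref{oc3} give $\eta_0 = K_*p_0 \in \partial G(u_0)$, hence $\langle \eta_0, u_0\rangle = G(u_0)$. Now fix $\mu_0 \in C_{u_0}$, so $\mu_0$ represents $u_0$ and $\|\mu_0\|_{M(\mathcal{B})} = G(u_0)$. Using the barycenter identity \eqref{barycenter} with $\eta = \eta_0$:
\begin{align*}
G(u_0) = \langle \eta_0, u_0\rangle = \int_{\mathcal{B}} \langle \eta_0, v\rangle \,\mathrm{d}\mu_0(v).
\end{align*}
Since $\mu_0 \in M^+(\mathcal{B})$ and $\langle \eta_0, v\rangle \leqslant 1$ for all $v \in \mathcal{B}$ by the bound above, the integrand is bounded above by $1$, so $\int_{\mathcal{B}}\langle \eta_0,v\rangle\,\mathrm{d}\mu_0(v) \leqslant \mu_0(\mathcal{B}) = \|\mu_0\|_{M(\mathcal{B})} = G(u_0)$. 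Thus equality holds throughout, which forces $\int_{\mathcal{B}} \bigl(1 - \langle \eta_0, v\rangle\bigr)\,\mathrm{d}\mu_0(v) = 0$ with a nonnegative integrand against a nonnegative measure; hence $\langle \eta_0, v\rangle = 1$ $\mu_0$-a.e. Since $v\mapsto \langle\eta_0,v\rangle$ is weak* continuous on $\mathcal{B}$, the set $\{v : \langle\eta_0,v\rangle = 1\} = \mathrm{Exc}(u_0)$ is closed, and a continuous function equal to $1$ $\mu_0$-a.e. must equal $1$ on all of $\mathrm{supp}\,\mu_0$ (any point of the support has positive $\mu_0$-mass in every neighbourhood, so the function cannot avoid the value $1$ there). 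Therefore $\mathrm{supp}\,\mu_0 \subset \mathrm{Exc}(u_0)$.

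For part (b): suppose $\mu_0 \in C_{u_0}$ with $\mathrm{supp}\,\mu_0 \subset \mathrm{Exc}(u_0)$. I would verify the optimality conditions \eqref{oc3}: the constraint $Ku_0 = y_0$ holds by hypothesis, so it remains to check $\eta_0 \in \partial G(u_0)$. We already know $\eta_0 = K_*p_0 \in \partial G(0)$ since $p_0$ is feasible for \ref{def:dualminprobhard}, i.e. $\langle\eta_0, v\rangle \leqslant G(v)$ for all $v$; it remains to show $\langle \eta_0, u_0\rangle = G(u_0)$. Using \eqref{barycenter} and that $\langle\eta_0, v\rangle = 1$ on $\mathrm{supp}\,\mu_0$,
\begin{align*}
\langle \eta_0, u_0\rangle = \int_{\mathcal{B}} \langle \eta_0, v\rangle\,\mathrm{d}\mu_0(v) = \int_{\mathrm{supp}\,\mu_0} 1 \,\mathrm{d}\mu_0(v) = \|\mu_0\|_{M(\mathcal{B})} = G(u_0).
\end{align*}
Hence $\eta_0 \in \partial G(u_0)$, and together with $Ku_0 = y_0$ this is precisely \eqref{oc3}, so by Proposition \ref{propsd2} $u_0$ solves \ref{def:minprobhard}.

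**Main obstacle.** The routine parts are the duality bookkeeping; the one point needing care is the passage from "$\langle\eta_0,\cdot\rangle = 1$ $\mu_0$-a.e." to "$= 1$ on $\mathrm{supp}\,\mu_0$" and, conversely in (b), correctly invoking that integrating a weak* continuous function that equals $1$ on $\mathrm{supp}\,\mu_0$ against $\mu_0$ gives $\|\mu_0\|_{M(\mathcal{B})}$ — both rely on $\mu_0$ being concentrated on its support, which is standard for Radon measures on the compact metric space $(\mathcal{B}, d_{\mathcal{B}})$. I should also make sure the inequality $\langle\eta_0, v\rangle \leqslant 1$ on $\mathcal{B}$ is justified, which needs $G \leqslant 1$ on $\mathcal{B} = \overline{\mathrm{Ext}(B)}^*$ — this follows from weak* lower semicontinuity only in the wrong direction, so instead I would argue: $\mathrm{Ext}(B)\subset B$ so $G\leqslant 1$ there, and for $u \in \mathcal{B}$ take $u_k \in \mathrm{Ext}(B)$ with $u_k \weakstar u$; then $\langle\eta_0, u\rangle = \lim \langle\eta_0, u_k\rangle \leqslant \limsup G(u_k) \leqslant 1$, using only that $\langle\eta_0,\cdot\rangle$ is weak* continuous (not lsc of $G$). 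Alternatively this is exactly Remark \ref{remarkcalB} which already records $\mathcal{B}\subset B$, giving $G\leqslant 1$ on $\mathcal{B}$ directly.
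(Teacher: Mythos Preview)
Your proposal is correct and follows essentially the same approach as the paper: both parts hinge on the characterization $\eta_0\in\partial G(u_0)\Leftrightarrow \langle\eta_0,u_0\rangle=G(u_0)$ and $\eta_0\in\partial G(0)$, combined with the barycenter identity and the bound $\langle\eta_0,v\rangle\leqslant 1$ on $\mathcal{B}$ (which the paper also derives via $\mathcal{B}\subset B$, i.e.\ Remark~\ref{remarkcalB}). The only cosmetic difference is in part (a): the paper writes out the passage from ``$\langle\eta_0,\cdot\rangle=1$ $\mu_0$-a.e.'' to ``on $\mathrm{supp}\,\mu_0$'' as an explicit contradiction using a small ball $B_\delta(\bar w)$, whereas you phrase it as ``nonnegative integrand integrating to zero against a nonnegative measure, then use continuity and closedness of $\mathrm{Exc}(u_0)$''---these are the same argument in different clothing.
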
	
 \begin{proof}
$(a)$ Suppose that $u_0$ is a solution to \ref{def:minprobhard} and let $\eta_0$ be the minimal-norm dual certificate  associated with \ref{def:dualminprobhard}. Thanks to the optimality conditions \eqref{oc3} it holds that $\eta_0\in \partial G(u_0)$, which is equivalent to the following condition:
\begin{align}
\langle \eta_0, u-u_0\rangle+G(u_0) \leqslant G(u) \quad \forall u \in X.
\end{align}
The previous inequality holds if and only if the following conditions are satisfied:
\begin{align}\label{eq:subdi}
\langle \eta_0, u_0 \rangle=G(u_0),\quad \langle \eta_0, v\rangle \leqslant G(v) \quad \forall v \in X.
\end{align}
Let us divide the inequality in \eqref{eq:subdi} by $G(v)$ and note that $w=v/G(v)$ belongs to $B$. Indeed, thanks to the homogeneity of $G$, we have $G(w)=1$ for every $w\in X$. Therefore, \eqref{eq:subdi} implies that
$$
\langle \eta_0, u_0\rangle=G(u_0), \quad \max _{w \in B}\langle \eta_0, w\rangle \leqslant1.
$$
Thanks to Remark \ref{remarkcalB}, we know that $\calB\subset B$, which implies the following inequality:
\begin{align}
 \max _{w \in \calB}\langle \eta_0, w\rangle \leqslant \max _{w \in B}\langle \eta_0, w\rangle \leqslant1.  
\end{align}
Consider any positive measure $\mu_0\in C_{u_0}$, that is a measure in the space $M^+(\calB)$ such that $G(u_0) =|\mu_0|(\mathcal{B})$ and it also satisfies the following condition:
\begin{equation}\label{m.3}
\langle \eta, u_0\rangle=\int_{\mathcal{B}}\langle \eta, w\rangle \mathrm{d} \mu_0(w) \quad \quad \forall \eta \in X_*.
\end{equation}
To obtain the sought result, we just need to prove that $\langle \eta_0, w\rangle=1$ for all $w\in \supp$.
Let us assume by contradiction that there exists $\varepsilon>0$ and $\bar w\in \supp$ such that $\langle\eta_0,\bar w\rangle\leqslant1-\varepsilon$.
From the weak* continuity of the mapping $w \mapsto \langle \eta_0 , w\rangle$, it follows that there exists $\delta >0$ such that
\begin{align}\label{ocmu}
    \langle\eta_0,w\rangle\leqslant 1-\frac{\varepsilon}{2}\quad \forall  w\in B_{\delta}(\bar w),
\end{align}
where $B_{\delta}(\bar w):=\left\{w \in \calB: d_{\mathcal{B}}\left(\bar w, w\right) \leqslant \delta\right\}$.
Now, with $\eta = \eta_0$, we split the integral in \eqref{m.3} in the following two parts:
\begin{align}
\langle \eta_0,u_0\rangle=\int_{\mathcal{B}}\langle \eta_0, w\rangle \mathrm{d} \mu_0(w)=\int_{B_{\delta}(\bar w)}\langle \eta_0, w\rangle \mathrm{d} \mu_0(w)+\int_{\mathcal{B} \setminus B_{\delta}(\Bar{w})}\langle \eta_0, w\rangle \mathrm{d} \mu_0(w).
\end{align}

Thanks to the inequality \eqref{ocmu} and Definition \ref{supportmeasure}, we get: 
\begin{align*}
\langle \eta_0,u_0\rangle\leqslant \left(1-\frac{\varepsilon}{2}\right)|\mu_0|(B_{\delta}(\Bar{w}))+|\mu_0|(\mathcal{B} \setminus B_{\delta}(\Bar{w})) =|\mu_0|(\mathcal{B}) -\frac{\varepsilon}{2}|\mu_0|(B_{\delta}(\Bar{w})) <G(u_0).
\end{align*}
This immediately leads to a contradiction with \eqref{eq:subdi}. Thus, we conclude that $\langle \eta_0, w\rangle=1$ for all $w\in\supp$.

$(b)$ Consider a positive measure $\mu_0 \in C_{u_0}$ such that $\supp \subset \Exc$. Let $p_0 \in Y$ be the minimal-norm solution to \ref{def:dualminprobhard} and $\eta_0 = K_*p_0 $ be the minimal-norm dual certificate.
Then, due to the constraint imposed by the problem \ref{def:dualminprobhard}, we have $\eta_0\in \partial G(0)$, which leads to the following inequality:
\begin{align}\label{eq:l1}
\left\langle\eta_0, v\right\rangle\leqslant G(v)\quad \forall v\in X.
\end{align}
Furthermore, since $\supp \subset \Exc$ and $\mu_0 \in C_{u_0}$, we have:
\begin{align}\label{eq:l2}
    \langle \eta_0, u_0\rangle = \int_\mathcal{B} \langle \eta_0, v \rangle\, \mathrm{d} \mu_0(v) = |\mu_0|(\mathcal{B}) = G(u_0)\,.
\end{align}
Coupling \eqref{eq:l1} and \eqref{eq:l2} is equivalent to the condition $\eta_0 \in \partial G(u_0)$, which, together with the assumption $Ku_0=y_0$, establishes the minimality of $u_0$ based on Proposition \ref{propsd2}.
\end{proof}

 We also need to prove the statement $(a)$ for a solution $\Tilde{u}_\lambda$ to \ref{def:minprobsoftnoise}. To achieve this, we introduce the extreme critical set of $\tilde u_{\lambda}$ following Definition \ref{Exc}:
\begin{equation}\label{Excnoise}
{\rm Exc}(\tilde u_{\lambda}):=\left\{u \in \calB:\left\langle\tilde \eta_{\lambda}, u\right\rangle= 1\right\},
\end{equation}
where $\tilde \eta_{\lambda}$ is the dual certificate associated with \ref{def:minprobsoftnoise}.
 \begin{proposition}\label{prop2}
	   If $\Tilde{u}_\lambda \in X$ is a solution to \ref{def:minprobsoftnoise}, then 
         \begin{align}
         {\rm supp}\,\Tilde{\mu}_\lambda \subset{\rm Exc}(\tilde u_{\lambda}) \quad \forall \Tilde{\mu}_\lambda\in C_{\Tilde{u}_\lambda}.
        \end{align}
      \end{proposition}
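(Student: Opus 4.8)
The plan is to mimic the proof of part $(a)$ of Proposition \ref{prop1}, replacing the role of the optimality condition $\eta_0 \in \partial G(u_0)$ with the first optimality condition in \eqref{ocnoise}, namely $K_* \tilde p_\lambda = \tilde\eta_\lambda \in \partial G(\tilde u_\lambda)$. Indeed, the whole argument in part $(a)$ of Proposition \ref{prop1} only used that $\eta_0 \in \partial G(u_0)$ together with a representing measure $\mu_0 \in C_{u_0}$; it never used the equation $Ku_0 = y_0$ as such.

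First I would record that, since $\tilde u_\lambda$ solves \ref{def:minprobsoftnoise}, by the optimality conditions \eqref{ocnoise} there exists $\tilde p_\lambda \in Y$ with $\tilde\eta_\lambda = K_*\tilde p_\lambda \in \partial G(\tilde u_\lambda)$. By positive $1$-homogeneity of $G$, the inclusion $\tilde\eta_\lambda \in \partial G(\tilde u_\lambda)$ is equivalent to the pair of conditions
\begin{align}\label{eq:subdinoise}
\langle \tilde\eta_\lambda, \tilde u_\lambda\rangle = G(\tilde u_\lambda), \qquad \langle \tilde\eta_\lambda, v\rangle \leqslant G(v) \quad \forall v \in X,
\end{align}
exactly as in \eqref{eq:subdi}. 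Dividing the second inequality by $G(v)$ and using homogeneity gives $\langle \tilde\eta_\lambda, w\rangle \leqslant 1$ for all $w \in B$, hence, by Remark \ref{remarkcalB}, $\max_{w\in\calB}\langle\tilde\eta_\lambda,w\rangle \leqslant 1$; in particular $\langle \tilde\eta_\lambda, w\rangle \leqslant 1$ for every $w \in \calB$.

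Next, fix $\tilde\mu_\lambda \in C_{\tilde u_\lambda}$, which is non-empty by Remark \ref{existmeasure}. Then $\tilde u_\lambda$ is the barycenter of $\tilde\mu_\lambda$ and $G(\tilde u_\lambda) = \|\tilde\mu_\lambda\|_{M(\calB)} = |\tilde\mu_\lambda|(\calB)$. Testing the barycenter identity \eqref{barycenter} with $\eta = \tilde\eta_\lambda \in X_*$ yields
\begin{align}
G(\tilde u_\lambda) = \langle \tilde\eta_\lambda, \tilde u_\lambda\rangle = \int_{\calB} \langle \tilde\eta_\lambda, w\rangle\, \mathrm{d}\tilde\mu_\lambda(w).
\end{align}
Now I would argue by contradiction: suppose $\bar w \in \mathrm{supp}\,\tilde\mu_\lambda$ with $\langle \tilde\eta_\lambda, \bar w\rangle \leqslant 1 - \varepsilon$ for some $\varepsilon > 0$. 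By weak* continuity of $w \mapsto \langle \tilde\eta_\lambda, w\rangle$ on $\calB$ (and the fact that $d_\calB$ metrizes the weak* topology), there is $\delta > 0$ with $\langle \tilde\eta_\lambda, w\rangle \leqslant 1 - \varepsilon/2$ for all $w \in B_\delta(\bar w)$, and $|\tilde\mu_\lambda|(B_\delta(\bar w)) > 0$ by definition of the support. Splitting the integral over $B_\delta(\bar w)$ and its complement, using $\langle\tilde\eta_\lambda,w\rangle \leqslant 1$ on $\calB$ and $\langle\tilde\eta_\lambda,w\rangle\leqslant 1-\varepsilon/2$ on $B_\delta(\bar w)$, gives
\begin{align}
G(\tilde u_\lambda) \leqslant \left(1 - \tfrac{\varepsilon}{2}\right)|\tilde\mu_\lambda|(B_\delta(\bar w)) + |\tilde\mu_\lambda|(\calB \setminus B_\delta(\bar w)) = |\tilde\mu_\lambda|(\calB) - \tfrac{\varepsilon}{2}|\tilde\mu_\lambda|(B_\delta(\bar w)) < G(\tilde u_\lambda),
\end{align}
a contradiction. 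Hence $\langle \tilde\eta_\lambda, w\rangle = 1$ for every $w \in \mathrm{supp}\,\tilde\mu_\lambda$, i.e. $\mathrm{supp}\,\tilde\mu_\lambda \subset {\rm Exc}(\tilde u_\lambda)$. Since $\tilde\mu_\lambda \in C_{\tilde u_\lambda}$ was arbitrary, the claim follows. I do not expect any genuine obstacle here: the proof is essentially a transcription of part $(a)$ of Proposition \ref{prop1}, the only point requiring a word of care being that we invoke the subdifferential inclusion from the optimality conditions \eqref{ocnoise} rather than re-deriving minimality, and that $\tilde\eta_\lambda$ is defined via the (unique) dual solution $\tilde p_\lambda$ of \ref{def:dualminprobsoftnoise}, which exists by the same projection argument as in the noiseless case.
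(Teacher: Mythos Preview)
Your proof is correct and follows essentially the same approach as the paper's own proof: both invoke the optimality condition $\tilde\eta_\lambda \in \partial G(\tilde u_\lambda)$ from \eqref{ocnoise} and then transcribe the contradiction argument of part $(a)$ of Proposition~\ref{prop1} verbatim. The paper's version is slightly more terse (it explicitly refers back to Proposition~\ref{prop1} rather than spelling out the splitting of the integral), but the logic is identical.
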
	
 \begin{proof}
     Let $\Tilde{u}_\lambda \in X$ be a solution to \ref{def:minprobsoftnoise} and $\tilde p_\lambda$ be the unique solution to \ref{def:dualminprobsoftnoise}. 
Setting $\tilde \eta_\lambda = K_* \Tilde{p}_{\lambda}$, by the optimality conditions \eqref{ocnoise}, it holds that $\tilde{\eta}_{\lambda}\in \partial G(\Tilde{u}_{\lambda})$, which is equivalent to the following condition:
\begin{align}
\langle \tilde{\eta}_{\lambda}, u-\Tilde{u}_{\lambda}\rangle+G(\Tilde{u}_{\lambda}) \leqslant G(u) \quad \forall u \in X.
\end{align}
Replicating the same steps as outlined in the proof of Proposition \ref{prop1}, we find that
$$
\langle \tilde{\eta}_{\lambda}, \Tilde{u}_{\lambda}\rangle=G(\Tilde{u}_{\lambda}), \quad \max _{w \in \calB}\langle \tilde{\eta}_{\lambda}, w\rangle \leqslant 1.
$$
Consider any positive measure $\Tilde{\mu}_{\lambda}\in C_{\Tilde{u}_{\lambda}}$, that is a positive measure such that $G(\Tilde{u}_{\lambda}) =|\Tilde{\mu}_{\lambda}|(\mathcal{B})$ and it also satisfies the following condition:
\begin{equation}\label{m32}
\langle \eta, \Tilde{u}_{\lambda}\rangle=\int_{\mathcal{B}}\langle \eta, w\rangle \mathrm{d} \Tilde{\mu}_{\lambda}(w) \quad \quad \forall \eta\in X_*.
\end{equation}
Using a similar argument by contradiction as the one proposed in the proof of Proposition \ref{prop1}, we obtain that
\begin{align*}
\langle \Tilde{\eta}_{\lambda}, w\rangle=1\quad \forall w\in  {\rm supp}\,\Tilde{\mu}_\lambda,
\end{align*}
i.e. $ {\rm supp}\,\Tilde{\mu}_\lambda\subset {\rm Exc}(\tilde u_{\lambda})$ for every $\Tilde{\mu}_{\lambda}\in C_{\Tilde{u}_{\lambda}}$.
 \end{proof}
 \section{Localized properties of the solutions}\label{Sec4}
 In this section, we focus on the local properties of solutions to \ref{def:minprobsoftnoise} for small $\lambda$ and $w$. In particular, we consider the following set of admissible parameters/noise levels for $\lambda_0>0$ and $\alpha>0$:
\begin{equation}\label{lownoiseregime}
N_{\alpha, \lambda_0}=\left\{(\lambda, w) \in \mathbb{R}_{+} \times Y : 0 \leqslant \lambda \leqslant \lambda_0 \quad \text { and } \quad\|w\|_Y\leqslant \alpha \lambda\right\}.
\end{equation}
Note that $N_{\alpha, \lambda_0}$ is the natural admissible set that allows to show the convergence of minimizers of \ref{int2} to those of \ref{int1} when both the noise level and the regularization parameter approach zero while belonging to $N_{\alpha, \lambda_0}$ \cite{Hofmann}.
 As a reminder, for any $u \in \mathcal{B}$, we use $B_{\varepsilon}(u)$ to represent the ball centered at $u$ with a radius $\varepsilon > 0$, employing the $d_{\mathcal{B}}$ metric. In other words:
 \begin{align}
     B_\varepsilon(u) = \{v \in \mathcal{B} : d_{\mathcal{B}}(u,v) \leqslant \varepsilon\}.  
 \end{align}
 We also denote $y_0=K u_0$ for some $u_0 \in$ $X$, and 
 \begin{equation}\label{Exceps}
     \Exceps:= \bigcup_{u\in \Exc} B_\varepsilon(u).
 \end{equation}

\begin{Lemma}\label{Lemma1}
Assume that there exists  a solution to \ref{def:dualminprobhard} and let $\tilde{u}_\lambda \in X$ be any solution to \ref{def:minprobsoftnoise}. Given $\varepsilon >0$, there exist $\alpha>0, \lambda_0>0$ such that, for all $(\lambda, w) \in N_{\alpha, \lambda_0}$, 
\begin{equation}
\mathrm{supp}\,     \Tilde{\mu}_{\lambda}\subset \Exceps\quad \forall \tilde \mu_\lambda \in C_{\tilde u_\lambda}. 
\end{equation}
\end{Lemma}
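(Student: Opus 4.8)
The plan is to argue by contradiction, exploiting the convergence $p_\lambda \to p_0$ (Proposition \ref{ConvergenceDual}) together with the convergence of minimizers of \ref{def:minprobsoftnoise} to minimizers of \ref{def:minprobhard}, and the characterization of supports via the extreme critical set from Propositions \ref{prop1} and \ref{prop2}. First I would set up the contradiction: suppose the claim fails for some fixed $\varepsilon>0$. Then there are sequences $\lambda_n \to 0^+$ and $w_n \in Y$ with $\|w_n\|_Y \leqslant \alpha_n \lambda_n$ for $\alpha_n \to 0$ (this is exactly what it means that no admissible pair $(\alpha,\lambda_0)$ works — one picks $\alpha_n = 1/n$, $\lambda_0 = 1/n$ and gets a violating pair $(\lambda_n, w_n) \in N_{1/n,1/n}$), together with solutions $\tilde u_{\lambda_n}$ to $\mathcal{P}_{\lambda_n}(y_0+w_n)$, measures $\tilde\mu_{\lambda_n} \in C_{\tilde u_{\lambda_n}}$, and points $v_n \in \operatorname{supp}\tilde\mu_{\lambda_n}$ with $v_n \notin \Exceps$, i.e. $d_{\mathcal{B}}(v_n, u) > \varepsilon$ for every $u \in \Exc$.

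Next I would pass to convergent subsequences. Since $(\mathcal{B}, d_{\mathcal{B}})$ is compact, along a subsequence $v_n \to v_* \in \mathcal{B}$, and by continuity of the metric, $d_{\mathcal{B}}(v_*, u) \geqslant \varepsilon$ for all $u \in \Exc$, so in particular $v_* \notin \Exc$, meaning $\langle \eta_0, v_*\rangle < 1$ (recall $\langle\eta_0, w\rangle \leqslant 1$ for all $w \in \calB$ by the argument in the proof of Proposition \ref{prop1}(a)). On the other hand, by Proposition \ref{prop2}, $v_n \in \operatorname{supp}\tilde\mu_{\lambda_n} \subset \operatorname{Exc}(\tilde u_{\lambda_n})$, so $\langle \tilde\eta_{\lambda_n}, v_n\rangle = 1$, where $\tilde\eta_{\lambda_n} = K_*\tilde p_{\lambda_n}$ and $\tilde p_{\lambda_n}$ solves \ref{def:dualminprobsoftnoise}. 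The key analytic point is then to show $\langle \tilde\eta_{\lambda_n}, v_n\rangle \to \langle\eta_0, v_*\rangle$, which yields $1 = \langle\eta_0, v_*\rangle < 1$, a contradiction.

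For that convergence I would first establish $\tilde p_{\lambda_n} \to p_0$ strongly in $Y$. This should follow by repeating the proof of Proposition \ref{ConvergenceDual} with $y_0$ replaced by $y_0 + w_n$: the optimality/comparison inequalities \eqref{m1}, \eqref{m2} acquire an extra $\frac{1}{\lambda_n}(w_n, \tilde p_{\lambda_n})$ term which is controlled because $\|w_n\|_Y/\lambda_n \leqslant \alpha_n \to 0$ and $\tilde p_{\lambda_n}$ stays bounded; one also invokes \cite[Theorem 3.5]{Hofmann} (stability of minimizers in the regime $N_{\alpha,\lambda_0}$) to get $\tilde u_{\lambda_n} \stackrel{*}{\weakarrow} u_0$ for some minimizer $u_0$ of \ref{def:minprobhard}, then the same weak*-to-strong continuity of $K$ on $\dom(G)$ argument gives $K_*\tilde p_{\lambda_n} \in \partial G(0)$ in the limit and $\bar p$ a dual solution, and finally minimality of the norm of $p_0$ forces $\bar p = p_0$ and strong convergence. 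Hence $\tilde\eta_{\lambda_n} = K_*\tilde p_{\lambda_n} \to K_* p_0 = \eta_0$ strongly in $X_*$. Now estimate
\begin{align}
|\langle \tilde\eta_{\lambda_n}, v_n\rangle - \langle\eta_0, v_*\rangle| \leqslant |\langle \tilde\eta_{\lambda_n} - \eta_0, v_n\rangle| + |\langle\eta_0, v_n - v_*\rangle| \leqslant \|\tilde\eta_{\lambda_n} - \eta_0\|_{X_*}\, C + |\langle\eta_0, v_n - v_*\rangle|,
\end{align}
using that $\|v_n\|_X$ is bounded (as $v_n \in \mathcal{B} \subset B$, which is weak* compact, hence norm-bounded); the first term $\to 0$ by strong convergence of the certificates, the second $\to 0$ because $v_n \stackrel{*}{\weakarrow} v_*$ and $\eta_0 \in X_*$. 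This gives the contradiction and proves the Lemma.

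**Main obstacle.** The delicate part is the uniform-in-$w$ version of the dual convergence, i.e. showing $\tilde p_{\lambda_n} \to p_0$ when the data is perturbed by $w_n$; one must be careful that it is precisely the restriction $\|w\|_Y \leqslant \alpha\lambda$ defining $N_{\alpha,\lambda_0}$ (and not merely $w \to 0$) that makes the error terms $\frac{1}{\lambda_n}(w_n,\cdot)$ vanish, and that the quantifier structure of the statement ("there exist $\alpha, \lambda_0$") is correctly negated to produce sequences with $\alpha_n \to 0$. The rest is a routine compactness-plus-contradiction bookkeeping argument.
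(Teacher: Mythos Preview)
Your proof is correct, but it takes a genuinely different route from the paper's argument. The paper proceeds \emph{directly} rather than by contradiction: it first quantifies the gap $s = 1 - \sup_{u \in \mathcal{B}\setminus \Exceps}\langle \eta_0, u\rangle > 0$, and then shows that any $g \in X_*$ with $\sup_{u \in \mathcal{B}}\langle g - \eta_0, u\rangle < s$ satisfies $\{u : \langle g, u\rangle = 1\} \subset \Exceps$. To verify this smallness for $g = \tilde\eta_\lambda$, it splits $\tilde\eta_\lambda - \eta_0 = (\tilde\eta_\lambda - \eta_\lambda) + (\eta_\lambda - \eta_0)$ and bounds the two pieces separately: the first by the non-expansiveness Lemma~\ref{nonexpansive}, giving $\|p_\lambda - \tilde p_\lambda\|_Y \leqslant \|w\|_Y/\lambda \leqslant \alpha$, which dictates an explicit choice of $\alpha$; the second by Proposition~\ref{ConvergenceDual} applied verbatim (noiseless), which dictates $\lambda_0$. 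Then Proposition~\ref{prop2} finishes.

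Your approach instead absorbs both pieces into a single statement, namely $\tilde p_{\lambda_n} \to p_0$ strongly along the contradicting sequence, and you pay for this by having to redo the proof of Proposition~\ref{ConvergenceDual} with the noisy data $y_0 + w_n$ (controlling the extra $(w_n,\,\cdot\,)/\lambda_n$ terms via $\alpha_n \to 0$). This is perfectly valid; in fact the constraint $K_*\tilde p_{\lambda_n} \in \partial G(0)$ is weak-closed so you do not even strictly need the Hofmann minimizer convergence you invoke, though it does no harm. The paper's route buys a constructive $\alpha$ and a clean reuse of existing black boxes (Lemma~\ref{nonexpansive}, Proposition~\ref{ConvergenceDual}) without modification; your route is a standard soft compactness argument that avoids introducing the intermediate noiseless certificate $\eta_\lambda$ but is somewhat less explicit.
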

 \begin{proof}
     Consider $p_0\in Y$ the minimal-norm solution to \ref{def:dualminprobhard} and denote $\eta_0=K_*p_0 \in X_*$ the minimal-norm dual certificate. Recall that, according to Definition \ref{def:extendedsupport},
     the mapping $u \mapsto \langle \eta_0, u\rangle$ is equal to $1$ precisely when $u\in \Exc$. Therefore, if we define $K_{\varepsilon}=\calB \setminus \mathrm{Exc}^{\varepsilon}(u_0)$, due to the weak* continuity of the mapping $u\mapsto \langle \eta_0,u \rangle$, we have:
      \begin{equation*}
          \sup_{u\in K_{\varepsilon}}\langle \eta_0, u\rangle<1.
      \end{equation*}
     Define $s=1-\sup_{u\in K_{\varepsilon}}\langle \eta_0, u\rangle>0$. We are going to prove the following claim.
     If there exists $g \in X_*$ such that $\sup_{u\in\calB} \langle g-\eta_0,u\rangle<s$, then 
     \begin{equation}\label{s.2}  
     \{u\in \calB:\langle g,u\rangle =1\}\subset \Exceps.
     \end{equation}
     If we assume by contradiction that there exists $\bar u \in K_\varepsilon$ such that $\langle g,\bar u\rangle =1$, then
     \begin{align}
         \langle g-\eta_0, \bar u\rangle= 1-\langle\eta_0,\bar u\rangle\geqslant 1-\sup_{u\in K_{\varepsilon}} \langle \eta_0,u\rangle=s,
     \end{align}
     which immediately contradicts  $\sup_{u\in\calB} \langle g-\eta_0,u\rangle<s$. Hence, the claim holds.
     
     Now, let $p_\lambda$   and $\tilde{p}_{\lambda}$ be the unique solutions to \ref{def:dualminprobsoft} and \ref{def:dualminprobsoftnoise} respectively, for $w\in Y$. Thanks to Lemma \ref{nonexpansive}, we know that the function $\frac{y_0}{\lambda} \mapsto p_\lambda$ is \emph{non-expansive},      that is the following estimate holds:
     \begin{equation}\label{s.3.1}
         \left\|p_\lambda-\tilde{p}_\lambda\right\|_Y \leqslant \frac{\|w\|_Y}{\lambda}\leqslant \alpha.
     \end{equation}
     Defining $\eta_\lambda=K_* p_\lambda$ and $\tilde{\eta}_\lambda=K_* \tilde{p}_\lambda$ the dual certificates of the noiseless and  noisy problems respectively, we can use \eqref{s.3.1} and strong continuity of $K_*$ to deduce that for all $u \in \mathcal{B}$:
\begin{equation}\label{s.4}
     \begin{aligned}
         |\langle \eta_\lambda-\tilde{\eta}_\lambda,u\rangle| &= |\langle K_*(p_\lambda-\tilde{p}_\lambda),u\rangle|\\
         &\leqslant \|K_*\|_{\mathcal{L}(Y, X_*)} \left\|p_\lambda-\tilde{p}_\lambda\right\|_Y  \left\|u\right\|_X\\
         & \leqslant \|K_*\|_{\mathcal{L}(Y, X_*)} \alpha \left\|u\right\|_X,
         \end{aligned}
     \end{equation}
     where $\mathcal{L}(Y, X_*)$ is the space of linear bounded operators from $Y$ to $X_*$ and $\left\|u\right\|_X$ is bounded, because $\calB$ is norm-bounded.
     Choosing $\alpha=\frac{s}{2\|K_*\|_{\mathcal{L}(Y, X_*)} \left\|u\right\|_X}$ in $N_{\alpha,\lambda_0}$, we are able to write, for any $u \in \mathcal{B}$ and every $\|w\| \leqslant \alpha$, the following inequalities:
     \begin{equation}
\begin{aligned}
|\langle\eta_0-\tilde{\eta}_\lambda,u\rangle| &\leqslant |\langle\eta_0-{\eta}_\lambda,u\rangle|+|\langle\eta_{\lambda}-\tilde{\eta}_\lambda,u\rangle| \\
& \leqslant |\langle\eta_0-{\eta}_\lambda,u\rangle|+\frac{s}{2} \\
& \leqslant \|K_*\|_{\mathcal{L}(Y, X_*)} \left\|p_0-p_\lambda\right\|_Y \|u\|_{X}+ \frac{s}{2}.
\end{aligned}
\end{equation}
According to Proposition \ref{ConvergenceDual}, it holds that $\lim_{\lambda \rightarrow 0}\left\|p_0-p_\lambda\right\|_Y= 0$.
Thus, by selecting a sufficiently small $\lambda$, we ensure that $ \|K_*\|_{\mathcal{L}(Y, X_*)} \left\|p_0-p_\lambda\right\|_Y \|u\|_{X} < \frac{s}{2}$. This implies that 
\begin{align}
|\langle\eta_0-\tilde{\eta}_\lambda,u\rangle| \leqslant \|K_*\|_{\mathcal{L}(Y, X_*)} \left\|p_0-p_\lambda\right\|_Y \|u\|_{X} +\frac{s}{2} < s.
\end{align}
Finally, applying the initial claim with $g=\Tilde{\eta}_{\lambda}$ yields 
\begin{equation}\label{l.4.1}
  {\rm Exc}(\tilde u_{\lambda})\subset \Exceps.  
\end{equation}
By using Proposition \ref{prop2} and \eqref{l.4.1}, we get the desired result:
\begin{align*}
{\rm supp }\, \Tilde{\mu}_{\lambda}\subset{\rm Exc}(\tilde u_{\lambda}) \subset \Exceps \quad \forall \Tilde{\mu}_{\lambda}\in C_{\Tilde{u}_{\lambda}}.
\end{align*}
 \end{proof}

\subsection{Metric Non-Degenerate Source Condition}
Up to this point, we have not given any assumptions regarding the behaviour of the minimal-norm dual certificate locally around the points in $\Exc$. Similarly, we do not know anything about the local structure of  $\tilde{u}_\lambda$, which could potentially exhibit a non-sparse nature.

Therefore, we will introduce an assumption that defines a local non-degeneracy criterion for some elements $u_0^i \in \Exc$, and we will use such assumption to show the sparse nature of $\tilde u_{\lambda}$ for $(\lambda,w) \in N_{\alpha,\lambda_0}$. This assumption is referred to as the \emph{Metric Non-Degenerate Source Condition} (MNDSC).  We consider curves $\gamma : [0,1] \rightarrow \calB$ in the following set for a fixed $M >0$:
\begin{align}\label{GAMMA}
    \Gamma_M=\Bigg\{ \gamma \in C([0,1],\calB):  t \mapsto K(\gamma(t)) 
    &\text{ is } C^2((0,1)) \    \text{   and }  \sup_{t \in [0,1]}\left\|\frac{d^2}{dt^2}K(\gamma(t))\right\|_Y \leqslant M\Bigg\}.
\end{align}
In the definition above, $\frac{d^2}{dt^2}K(\gamma(t))$ has to be intended as  the second weak Gateaux derivative of the mapping $t \mapsto K(\gamma(t))$.


\begin{definition}[Metric Non-Degenerate Source Condition]\label{MNDSC}
Let $u_0 = \sum_{i=1}^n c_0^i u_0^i \in X$ be such that $y_0 = Ku_0$.  
We say that $u_0$ satisfies the \emph{Metric Non-Degenerate Source Condition} (MNDSC) if 
\begin{itemize}
\item[$(i)$] ${\rm Im}\, K_* \cap \partial G(u_0) \neq \emptyset$,
    \item[$(ii)$] $\left\{u_{0}^1, \ldots, u_{0}^n\right\}=\Exc$,
    \item[$(iii)$] $\exists \varepsilon,\delta>0$ such that, for all $ i =1,\ldots, n$, and for any $ v_1,v_2\in B_{\varepsilon}(u_0^i)$ where $v_1\neq v_2$, the following condition holds. There exists a curve $\gamma:[0,1]\rightarrow B_{\varepsilon}(u_0^i)$, belonging to $\Gamma_M$, with $\gamma(0)=v_1$ and $\gamma(1)=v_2$, such that
\begin{align}\label{eq:strictconcavity}
\frac{d^2}{dt^2}\langle \eta_0,\gamma(t)\rangle<-\delta\quad \forall t \in (0,1).
\end{align}
\end{itemize}

\end{definition}

\begin{remark}\label{rem:1rem} 
    Let us remark on the main differences between our Metric Non-Degenerate Source Condition (MNDSC), as introduced in Definition \ref{MNDSC}, and the standard Non-Degenerate Source Condition presented in \cite[Definition $5$]{Peyre}. First, note that we are dealing with general extreme points $\{u_0^i\}_{i=1}^n$, while they focus on positions $x_0^i\in\TT$ for all $i=1,\ldots,n$, because they work with Dirac deltas as extreme points. 
    \begin{enumerate}
    \item [] \textbf{Condition $(i)$}: It corresponds to the classical \emph{source condition} \cite{burger2004convergence}, which implies the optimality conditions. Therefore, by Proposition \ref{propsd2}, $u_0$ is a minimizer of \ref{def:minprobhard} and there exists a solution to \refeq{def:dualminprobhard}.  This is identical to the first condition described in \cite[Definition 5]{Peyre}.
 \item [] \textbf{Condition $(ii)$}: This condition specifies that only at the $n$ extreme points $\{u_0^i\}_{i=1}^n$, the relation $\langle \eta_0, u\rangle=1$ holds, implying also that $u_0^i\neq 0$ for every $i$.  In contrast, thanks to the optimality conditions
 \eqref{oc3} for $u_0$, all other points satisfy $\langle \eta_0, u\rangle<1$. This aligns naturally with the first part of the second condition proposed in \cite[Definition 5]{Peyre}, essentially identifying our extreme critical set with their extended support for measures. 
        \item [] \textbf{Condition $(iii)$}: This condition plays an essential role in distinguishing between our MNDSC and the NDSC introduced in \cite[Definition 5]{Peyre}, where the only requirement is that the second derivative of the minimal-norm dual certificate $\eta_0=K_*p_0$, computed at the support of $n$ Dirac deltas, is different than zero. In our condition $(iii)$, we instead prescribe the non-degeneracy of the map $t \mapsto \langle \eta_0, \gamma(t) \rangle$ for curves $\gamma$ with values in $B_{\varepsilon}(u_0^i)$. Note that
        we do not only ask that $\frac{d^2}{dt^2}\langle \eta_0,\gamma(t)\rangle<-\delta$ at $\bar t$ such that $\gamma(\bar t) = u_0^i$, but we require that this condition must be satisfied at \emph{every} point along a curve connecting any pair of extreme points in a small neighbourhood of $u_0^i$ (not necessarily passing through $u_0^i$). This makes our condition not defined pointwise reflecting the lack of differential structure of the set $\mathcal{B}$.
    \end{enumerate}
\end{remark}
\begin{remark}

Note that, a direct computation shows that $\frac{d^2}{dt^2}( p,K(\gamma(t))) = (p,\frac{d^2}{dt^2} K(\gamma(t)))$ holds for any $p \in Y$, where $\frac{d^2}{dt^2} K(\gamma(t))$ represents the second weak Gateaux derivative of the function $t\mapsto K(\gamma(t))$. In particular, we have that $\frac{d^2}{dt^2}\langle \eta_0,\gamma(t)\rangle=\frac{d^2}{dt^2}( p_0,K(\gamma(t))) = (p_0,\frac{d^2}{dt^2} K(\gamma(t)))$ is well-defined. The same holds also with $\eta_{\lambda}$ and $\tilde \eta_{\lambda}$.
    \end{remark}

Due to Remark \ref{rem:1rem}, condition $(i)$ in the MNDSC directly implies that $u_0 = \sum_{i=1}^n c_0^i u_0^i$ is a minimizer of \ref{def:minprobhard}. Furthermore, given that $\{Ku_0^i\}_{i=1}^n$ are linearly independent, $u_0$ is the unique and uniquely representable minimizer. This is the content of the following lemma.

\begin{Lemma}\label{uniquerepresentation}
Let $u_0 = \sum_{i=1}^n c_0^i u_0^i\in X$ be such that $y_0 = Ku_0$ and satisfy conditions $(i)$-$(ii)$ in Definition \ref{MNDSC}. If $\{Ku_0^i\}_{i=1}^n$ are linearly independent, then $u_0$ is the unique minimizer of \ref{def:minprobhard} and it is uniquely representable as in Definition \ref{unirep}. 
\end{Lemma}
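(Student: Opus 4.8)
The plan is to reduce the statement to two ingredients already at hand: Proposition \ref{prop1}$(a)$, which confines every representing measure to the extreme critical set, and the linear independence of $\{Ku_0^i\}_{i=1}^n$, which rigidifies any finite combination of the $u_0^i$.

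First I would use condition $(i)$ to set the stage. Choosing $p\in Y$ with $K_*p\in\partial G(u_0)$, the pair $(u_0,p)$ satisfies the optimality system \eqref{oc3}, so Proposition \ref{propsd2} already gives that $u_0$ solves \ref{def:minprobhard} and that \ref{def:dualminprobhard} admits a solution. Consequently its minimal-norm solution $p_0$ exists, $\eta_0:=K_*p_0$ is the minimal-norm dual certificate, and Proposition \ref{propsd2} applied to $(u_0,p_0)$ yields $\eta_0\in\partial G(u_0)$; in particular $G(u_0)=\langle\eta_0,u_0\rangle=\sum_{i=1}^n c_0^i\langle\eta_0,u_0^i\rangle=\sum_{i=1}^n c_0^i>0$ by condition $(ii)$. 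Condition $(ii)$ also says $\Exc=\{u_0^1,\dots,u_0^n\}$, and these points are pairwise distinct, since $\{Ku_0^i\}_{i=1}^n$ is linearly independent.

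The core of the argument is the following observation, which I would apply to an arbitrary minimizer $u_1$ of \ref{def:minprobhard} together with an arbitrary $\mu_1\in C_{u_1}$ (non-empty by Remark \ref{existmeasure}, after normalising $u_1$, which is legitimate since $G(u_1)=G(u_0)>0$). The extreme critical set appearing in Proposition \ref{prop1}$(a)$ is the same for every minimizer, being defined through $\eta_0$ alone, namely $\Exc=\{u_0^1,\dots,u_0^n\}$; hence $\mathrm{supp}\,\mu_1\subset\{u_0^1,\dots,u_0^n\}$. Since $\calB$ is a compact, hence second-countable, metric space, a positive Radon measure whose support is contained in this finite set is necessarily of the form $\mu_1=\sum_{i=1}^n b_i\,\delta_{u_0^i}$ with $b_i=\mu_1(\{u_0^i\})\geqslant0$. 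Inserting this into the barycenter identity \eqref{barycenter} gives $\langle\eta,u_1\rangle=\sum_{i=1}^n b_i\langle\eta,u_0^i\rangle$ for all $\eta\in X_*$, and since $X_*$ separates the points of $X=(X_*)^*$ we get $u_1=\sum_{i=1}^n b_i u_0^i$ in $X$. Applying $K$ and using $u_0=\sum_{i=1}^n c_0^i u_0^i$ we obtain $\sum_{i=1}^n (b_i-c_0^i)Ku_0^i=Ku_1-y_0=0$, so the linear independence of $\{Ku_0^i\}_{i=1}^n$ forces $b_i=c_0^i$ for every $i$, whence $u_1=u_0$.

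This single computation yields both conclusions: taking $u_1$ to be an arbitrary minimizer shows that $u_0$ is the unique minimizer of \ref{def:minprobhard}; specialising to $u_1=u_0$ shows that every $\mu_0\in C_{u_0}$ equals $\sum_{i=1}^n c_0^i\delta_{u_0^i}$, so that $C_{u_0}$ is a singleton and $u_0$ is uniquely representable in the sense of Definition \ref{unirep}. The only mildly delicate point I anticipate is the measure-theoretic reduction of a positive Radon measure with support in a finite set to a finite sum of Dirac masses — entirely routine, but it does rely on $\calB$ being a separable metric space; everything else is Propositions \ref{propsd2} and \ref{prop1} combined with elementary linear algebra.
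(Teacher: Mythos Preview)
Your proof is correct and follows essentially the same route as the paper: both reduce everything to Proposition \ref{prop1}$(a)$ to force the representing measure onto $\{u_0^1,\dots,u_0^n\}$, write it as $\sum_i b_i\delta_{u_0^i}$, recover $u_1=\sum_i b_i u_0^i$ from the barycenter identity, and then use linear independence of $\{Ku_0^i\}_{i=1}^n$ together with $Ku_1=y_0$ to pin down $b_i=c_0^i$. If anything, you are slightly more explicit than the paper in two places---the normalisation needed to invoke Remark \ref{existmeasure} when $G(u_1)>1$, and the observation that $\Exc$ depends only on $\eta_0$ and hence is the same for every minimizer---but the argument is otherwise identical.
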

\begin{proof}
Let $\bar u \in X$ be a minimizer of \ref{def:minprobhard}. By Remark \ref{existmeasure}, we know that there exists at least one measure $\bar \mu\in C_{\bar u}$. Moreover, thanks to Proposition \ref{prop1}, we also know that
\begin{align*}
    {\rm supp}\, \bar \mu\subset \Exc= \left\{u_{0}^1, \ldots, u_{0}^n\right\}.
\end{align*}
Therefore, $\bar \mu$ is a discrete measure composed of at most $n$ Dirac deltas as follows: 
\begin{align}\label{l.4.3.1}
   \bar \mu = \ds{\sum_{i=1}^n} \bar c^i\delta_{u_0^i},
\end{align}
where $\bar c^i\geqslant 0$ for every $i=1,\dots,n$. Thanks to \eqref{l.4.3.1}, we get that
\begin{equation*}
    \begin{aligned}
        \langle \eta , \bar u\rangle &=\int_{\mathcal{B}}\langle \eta, v\rangle \mathrm{d} \bar \mu(v) = \ds{\sum_{i=1}^n} \bar c^i \langle \eta,u_0^i\rangle=\langle \ds{\eta,\sum_{i=1}^n} \bar c^i u_0^i\rangle \quad \forall \eta\in X_*.
    \end{aligned}
\end{equation*}
This implies that
 \begin{align}\label{l.4.3.2}
\bar u=\ds{\sum_{i=1}^n} \bar c^i u_0^i.
 \end{align}   

 Since $K\bar u=Ku_0=y_0$, by the linear independence of $\{Ku_0^i\}_{i=1}^n$, we deduce that
\begin{align}\label{eq:linid}
    K\ds{\sum_{i=1}^n}\left( c_0^i-\Bar{c}^i\right)u_0^i=\ds{\sum_{i=1}^n}\left(c_0^i-\Bar{c}^i\right)Ku_0^i=0\ \Rightarrow  \ c_0^i=\Bar{c}^i.
\end{align}
This shows that $\bar u=u_0$ is the unique minimizer of \ref{def:minprobhard} and $\bar \mu={\sum_{i=1}^n} c_0^i\delta_{u_0^i}\in C_{u_0}$. 

Finally, we establish the unique representability of $u_0$, that is $\bar \mu\in C_{u_0}$ is unique. Let $\mu_0 \in C_{u_0}$. According to Proposition \ref{prop1}, we have that 
\begin{equation}
   \mu_0 = \sum_{i=1}^n \bar c_0^i \delta_{u_0^i}, 
\end{equation} where $\bar c_0^i \geqslant 0$ for every $i=1,\dots,n$. Repeating the computation above and using the linear independence of $\{Ku_0^i\}_{i=1}^n$ in \eqref{eq:linid}, we deduce that $c_0^i=\bar c_0^i $. This implies that $\bar \mu=\mu_0={\sum_{i=1}^n} c_0^i\delta_{u_0^i}$, that is $u_0$ is uniquely representable. 
\end{proof}

    We note that in the previous lemma, the coefficients $c_0^i$ could potentially be greater or equal than zero. 
    This observation indicates that the unique solution $u_0\in X$ to \ref{def:minprobhard} could be a linear combination of a maximum of $n$ extreme points $u_0^i\in \calB$. This arises from the fact that if certain coefficients are equal to zero, the respective terms in the combination would vanish.

    If such a case would happen, a straightforward solution would be to adjust the \emph{extreme critical set} of $u_0$,  removing terms corresponding to the zero coefficients, in such a way that
    \begin{align*}
    \left\{u_{0}^1, \ldots, u_{0}^m\right\}=\Exc,
\end{align*}
where $m<n$. 
In this case, Lemma \ref{uniquerepresentation} would remain unchanged, mirroring the procedure with $m$ instead of $n$. Therefore, from now on, we will consider an additional assumption for the minimizer $
u_0=\sum_{i=1}^{n}c_0^iu_0^i
$, referred to as the \emph{complementarity assumption}.  This assumption restricts our focus exclusively to coefficients $c_0^i$ that are greater than zero. 

Assuming the validity of the MNDSC in Definition \ref{MNDSC} for $u_0$, we now prove any minimizer $\tilde{u}_\lambda$  of \ref{def:minprobsoftnoise}, is actually sparse for $(\lambda, w) \in N_{\alpha, \lambda_0}$, namely it is a linear combination of extreme points $\tilde u_\lambda^i$ in $\mathcal{B}$. Furthermore, such $\tilde u_\lambda^i$ are unique, satisfy $\langle \tilde{\eta}_\lambda,\Tilde{u}_{\lambda}^i\rangle=1$, and the corresponding coefficient $\tilde c_{\lambda}^i$ are strictly greater than zero. 

 \begin{Lemma}\label{Lemma2}
Assume that $u_0 = \sum_{i=1}^n c_0^i u_0^i$, where $c_0^i >0$ and $u_0^i \in \mathcal{B}\setminus \{0\}$ for every $i=1,\ldots,n$, satisfies the MNDSC given in Definition \ref{MNDSC}. Let $\left\{Ku_{0}^1, \ldots, Ku_{0}^n\right\}$ be linearly independent and $\tilde{u}_\lambda \in X$ be any solution to \ref{def:minprobsoftnoise}. 
Then, for $\varepsilon>0$ small enough, there exist $\alpha>0,$ $ \lambda_0>0$ such that, for all $(\lambda, w) \in N_{\alpha, \lambda_0}$, there exists a unique collection of $\tilde{u}_{\lambda}^i \in B_{\varepsilon}(u_0^i)$ that satisfies the following identity:
\begin{align}
    \tilde u_\lambda = \sum_{i=1}^n \tilde{c}_{\lambda}^i \tilde{u}_{\lambda}^i,
\end{align}
where $\tilde{c}_{\lambda}^i >0$ and $\langle \tilde{\eta}_\lambda,\Tilde{u}_{\lambda}^i\rangle=1$  for every $i=1,\dots,n$. 
 \end{Lemma}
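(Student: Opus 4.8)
The plan is to combine the localisation of supports from Lemma~\ref{Lemma1} with the strict concavity in condition~$(iii)$ of the MNDSC, in order to show that on each ball $B_\varepsilon(u_0^i)$ every Choquet measure representing $\tilde u_\lambda$ can charge at most one point; the strict positivity of the coefficients and the uniqueness of the collection then follow from the linear independence of $\{Ku_0^i\}_{i=1}^n$ and the convexity of the Choquet set $C_{\tilde u_\lambda}$.

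First I would fix $\varepsilon$. Since $\{u_0^1,\dots,u_0^n\}=\Exc$ by $(ii)$, the points $u_0^i$ are pairwise distinct, so I take $\varepsilon>0$ no larger than the radius provided by $(iii)$ and small enough that the balls $B_\varepsilon(u_0^i)$ are pairwise disjoint and that the modulus $\rho(\varepsilon):=\sup\{\|Kv-Ku\|_Y:u,v\in\calB,\ d_{\calB}(u,v)\leqslant\varepsilon\}$ — well defined and tending to $0$ since $u\mapsto Ku$ is continuous, hence uniformly continuous, on the compact metric space $(\calB,d_{\calB})$ by Assumption~\ref{eq:assK} — is as small as needed below. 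For this $\varepsilon$, Lemma~\ref{Lemma1} provides $\alpha_1,\lambda_1>0$ with $\mathrm{supp}\,\tilde\mu_\lambda\subset\Exceps=\bigcup_{i=1}^nB_\varepsilon(u_0^i)$ for every $\tilde\mu_\lambda\in C_{\tilde u_\lambda}$ and $(\lambda,w)\in N_{\alpha_1,\lambda_1}$; together with Proposition~\ref{prop2} this yields $\langle\tilde\eta_\lambda,v\rangle=1$ for all $v\in\mathrm{supp}\,\tilde\mu_\lambda$.

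The core step — and the one I expect to be the main obstacle — is to show, after further shrinking $\alpha,\lambda_0$, that no $\tilde\mu_\lambda\in C_{\tilde u_\lambda}$ has two distinct atoms $v_1\neq v_2$ inside the same ball $B_\varepsilon(u_0^i)$. Given such $v_1,v_2$, condition~$(iii)$ furnishes a curve $\gamma\in\Gamma_M$ with $\gamma(0)=v_1$, $\gamma(1)=v_2$, taking values in $\calB$, and $\frac{d^2}{dt^2}\langle\eta_0,\gamma(t)\rangle<-\delta$ on $(0,1)$. Writing $\eta_0=K_*p_0$, $\tilde\eta_\lambda=K_*\tilde p_\lambda$ and using the uniform bound $\sup_t\|\frac{d^2}{dt^2}K(\gamma(t))\|_Y\leqslant M$ built into $\Gamma_M$,
\[
\tfrac{d^2}{dt^2}\langle\tilde\eta_\lambda,\gamma(t)\rangle
=\big(p_0,\tfrac{d^2}{dt^2}K(\gamma(t))\big)+\big(\tilde p_\lambda-p_0,\tfrac{d^2}{dt^2}K(\gamma(t))\big)
<-\delta+M\|\tilde p_\lambda-p_0\|_Y .
\]
By Lemma~\ref{nonexpansive} and Proposition~\ref{ConvergenceDual} one has $\|\tilde p_\lambda-p_0\|_Y\leqslant\|\tilde p_\lambda-p_\lambda\|_Y+\|p_\lambda-p_0\|_Y\leqslant\alpha+\|p_\lambda-p_0\|_Y$, which becomes $<\delta/(2M)$ once $\alpha,\lambda_0$ are small; hence $\frac{d^2}{dt^2}\langle\tilde\eta_\lambda,\gamma(t)\rangle<-\delta/2<0$ on $(0,1)$, so $t\mapsto\langle\tilde\eta_\lambda,\gamma(t)\rangle$ is continuous on $[0,1]$ and strictly concave. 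Since $v_1,v_2\in\mathrm{supp}\,\tilde\mu_\lambda$ the two endpoint values equal $1$, so $\langle\tilde\eta_\lambda,\gamma(t)\rangle>1$ for $t\in(0,1)$; but $\gamma(t)\in\calB\subset B$ and $K_*\tilde p_\lambda\in\partial G(0)$ give $\langle\tilde\eta_\lambda,\gamma(t)\rangle\leqslant G(\gamma(t))\leqslant1$, a contradiction.

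The rest is bookkeeping. By the previous step, any $\tilde\mu_\lambda\in C_{\tilde u_\lambda}$ is of the form $\sum_i\tilde c_\lambda^i\delta_{\tilde u_\lambda^i}$ with $\tilde c_\lambda^i\geqslant0$ and a well-defined atom $\tilde u_\lambda^i\in B_\varepsilon(u_0^i)$ whenever $\tilde c_\lambda^i>0$; testing the barycenter identity against $\eta\in X_*$ gives $\tilde u_\lambda=\sum_i\tilde c_\lambda^i\tilde u_\lambda^i$ and $K\tilde u_\lambda=\sum_i\tilde c_\lambda^i K\tilde u_\lambda^i$. Comparing the values of the functional in \ref{def:minprobsoftnoise} at $\tilde u_\lambda$ and at $u_0$ bounds $G(\tilde u_\lambda)=\sum_i\tilde c_\lambda^i$ uniformly, and the optimality relation $-\tilde p_\lambda=\frac1\lambda(K\tilde u_\lambda-y_0-w)$ together with the bound $\|\tilde p_\lambda\|_Y\leqslant\alpha+\|p_0\|_Y$ (from Lemma~\ref{nonexpansive} and Proposition~\ref{ConvergenceDual}) gives $\|K\tilde u_\lambda-y_0\|_Y\leqslant C\lambda$. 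Since $y_0=\sum_i c_0^i Ku_0^i$, $\|K\tilde u_\lambda^i-Ku_0^i\|_Y\leqslant\rho(\varepsilon)$, and $\{Ku_0^i\}_{i=1}^n$ are linearly independent (so that $\|\sum_i a_iKu_0^i\|_Y\geqslant c\|a\|_\infty$ for a fixed $c>0$), the triangle inequality yields $\max_i|\tilde c_\lambda^i-c_0^i|\leqslant(C\lambda+C'\rho(\varepsilon))/c$, which is $<\tfrac12\min_ic_0^i$ once $\varepsilon$ and then $\lambda_0$ are small; in particular $\tilde c_\lambda^i>0$ for every $i$, all atoms $\tilde u_\lambda^i$ are defined, and $\langle\tilde\eta_\lambda,\tilde u_\lambda^i\rangle=1$ since $\tilde u_\lambda^i\in\mathrm{supp}\,\tilde\mu_\lambda\subset{\rm Exc}(\tilde u_\lambda)$. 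Finally, uniqueness follows by convexity of $C_{\tilde u_\lambda}$: any positive $\nu$ representing $\tilde u_\lambda$ has $\|\nu\|_{M(\calB)}\geqslant\langle\tilde\eta_\lambda,\tilde u_\lambda\rangle=G(\tilde u_\lambda)$, so the midpoint of two elements of $C_{\tilde u_\lambda}$ is again in $C_{\tilde u_\lambda}$; applying the core step to this midpoint, whose support is the union of the two candidate collections, forces the collections to coincide. Taking $\alpha,\lambda_0$ to be the minima of the finitely many thresholds collected above completes the argument.
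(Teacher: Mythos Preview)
Your proof is correct. The concavity core step is essentially the paper's argument, only you phrase it as ``two atoms with value $1$ force an intermediate value $>1$, contradicting $\langle\tilde\eta_\lambda,\cdot\rangle\leqslant 1$ on $\calB$'', whereas the paper shows that $u\mapsto\langle\tilde\eta_\lambda,u\rangle$ has a \emph{unique maximiser} on $B_\varepsilon(u_0^i)$; the two are equivalent, and your version is arguably slightly more robust because it only uses $\gamma(t)\in\calB$, not $\gamma(t)\in B_\varepsilon(u_0^i)$.

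Where you genuinely diverge from the paper is in the proof that $\tilde c_\lambda^i>0$ and in the uniqueness of the collection. The paper argues by contradiction and compactness: if along a sequence $(\lambda_k,w_k)\to(0,0)$ some $\tilde c_{\lambda_k}^j\to 0$, then by \cite[Theorem~3.5]{Hofmann} the weak* limit of $\tilde u_{\lambda_k}$ is a minimiser of \ref{def:minprobhard}, hence equals $u_0$ by Lemma~\ref{uniquerepresentation}, and the unique representability forces $c_0^j=0$, a contradiction. You instead give a direct quantitative estimate: from the optimality relation $\|K\tilde u_\lambda-y_0\|_Y\leqslant C\lambda$, the uniform continuity of $K$ on $\calB$, and the linear independence of $\{Ku_0^i\}$, you obtain $\max_i|\tilde c_\lambda^i-c_0^i|\leqslant (C\lambda+C'\rho(\varepsilon))/c$. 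This is more constructive, yields an explicit bound, and avoids invoking \cite{Hofmann} and Lemma~\ref{uniquerepresentation}. For uniqueness, the paper gets $\tilde u_\lambda^i$ for free as the unique maximiser in $B_\varepsilon(u_0^i)$, which is slightly more direct (and is reused in Theorem~\ref{mainthm} for continuity of $\tilde u_\lambda^i$); your midpoint-in-$C_{\tilde u_\lambda}$ trick is an equally valid alternative.
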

 \begin{proof}
     Let us consider the ball with respect to the metric $d_{\calB}$:
     \begin{align*}
        B_{\varepsilon}(u_0^i)=\{v\in \mathcal{B}: d_{\calB}(u_0^i,v)\leqslant \varepsilon\}.
     \end{align*}
     Thanks to the MNDSC for $u_0$, we know that there exists $\varepsilon >0$ such that, for every $\gamma: [0,1]\rightarrow B_{\varepsilon}(u_0^i)$ in $\Gamma_M$ connecting two distinct points in $B_{\varepsilon}(u_0^i)$, it holds: 
     \begin{equation*}\frac{d^2}{dt^2}\langle \eta_0,\gamma(t)\rangle< - \delta \quad \forall t\in (0,1).
     \end{equation*}
Moreover, it also holds that $\langle\eta_0,u\rangle<1$ for every $u\in  B_{\varepsilon}(u_0^i), u\neq u_0^i$, and $\langle\eta_0,u_0^i\rangle=1$.
     As a result, for sufficiently small $\varepsilon>0$, $\Exc \cap B_{\varepsilon}(u_0^i)=\{u_0^i\}$, that is $u_0^i$ is an isolated point of $\Exc$.
     

We now aim to prove that the function $u \mapsto \langle\tilde{\eta}_{\lambda},u\rangle$ has a unique maximizer in the ball $B_\varepsilon(u_0^i)$ for each $i$.
Suppose by contradiction that there exist $\tilde{u}_{\lambda}^{i,1}, \tilde{u}_{\lambda}^{i,2}\in B_{\varepsilon}(u_0^i)$ such that $\tilde{u}_{\lambda}^{i,1}\neq  \tilde{u}_{\lambda}^{i,2}$, both maximizing $u \mapsto \langle\tilde{\eta}_{\lambda},u\rangle$. Then, by the MNDSC, there exists a curve $\gamma \in \Gamma_M$, satisfying $\gamma(0)=\tilde{u}_{\lambda}^{i,1}, \gamma(1)=\tilde{u}_{\lambda}^{i,2}$, such that
\begin{align}\label{eq:nondeg}
    \frac{d^2}{dt^2}\langle \eta_0,\gamma(t)\rangle=\frac{d^2}{dt^2}(p_0,K(\gamma(t)))< -\delta\quad \forall t\in (0,1),
    \end{align}
    where $0$ and $1$ are also maximizers of $t \mapsto \langle\tilde{\eta}_{\lambda},\gamma(t)\rangle$.
        Thanks to the Cauchy-Schwarz inequality, we have:
\begin{equation}\label{l2.2}
\begin{aligned}
|(p_0-p_{\lambda}, \frac{d^2}{dt^2}K(\gamma(t)))|&\leqslant \|p_0-p_{\lambda}\|_Y\left\| \frac{d^2}{dt^2}K(\gamma(t))\right\|_{Y}\leqslant M\left\|p_0-p_{\lambda}\right\|_Y,
\end{aligned}
 \end{equation} where $M$ is a bounded positive constant, because $\gamma\in \Gamma_M$. Recall also that, thanks to Lemma \ref{nonexpansive}, the function $\frac{y_0}{\lambda} \mapsto p_\lambda$ is non-expansive, which implies that
\begin{equation}\label{l2.3}
\|p_{\lambda}-\tilde{p}_{\lambda}\|_Y\leqslant \frac{\|w\|_Y}{\lambda} \leqslant \alpha.  \end{equation}  From \eqref{l2.3}, we obtain that
\begin{equation}\label{l2.4}
\begin{aligned}
|(p_{\lambda}-\tilde{p}_{\lambda}, \frac{d^2}{dt^2}K(\gamma(t)))|&\leqslant \|p_{\lambda}-\tilde{p}_{\lambda}\|_Y\left\| \frac{d^2}{dt^2}K(\gamma(t))\right\|_{Y}\\
&\leqslant M\alpha.
\end{aligned}
\end{equation}
Mirroring the reasoning in the proof of Lemma \ref{Lemma1}, set $\alpha=\frac{\delta}{4M}$ in $N_{\alpha,\lambda_0}$ (note that $\alpha$ does not depend on the curve chosen). Then, we can write:
     \begin{equation}
\begin{aligned}
|( \tilde{p}_\lambda- p_0,\frac{d^2}{dt^2}K(\gamma(t)))| &\leqslant |(p_0-p_\lambda,\frac{d^2}{dt^2}K(\gamma(t)))|+| (p_{\lambda}-\tilde{p}_\lambda,\frac{d^2}{dt^2}K(\gamma(t)))| \\
& \leqslant |(p_0-p_\lambda,\frac{d^2}{dt^2}K(\gamma(t)))| +\frac{\delta}{2}\\
& \leqslant \left\|p_0-p_\lambda\right\|_Y M + \frac{\delta}{4}.
\end{aligned}
\end{equation}
For $\lambda$ small enough, using Proposition \ref{ConvergenceDual}, we can assume that $\left\|p_0-p_\lambda\right\|_Y M  < \frac{\delta}{4}$, which implies that
\begin{align}\label{eq:err}
|(\tilde{p}_\lambda- p_0,\frac{d^2}{dt^2}K(\gamma(t)))|  < \frac{\delta}{2}.
\end{align}
Hence, combining \eqref{eq:err} and \eqref{eq:nondeg} and using $(iii)$ in the MNDSC, we get that   
\begin{align*}
\begin{aligned}
\frac{d^2}{dt^2}\langle \tilde \eta_{\lambda},\gamma(t)\rangle&=(\tilde p_\lambda,\frac{d^2}{dt^2}K(\gamma(t)))\\
&\leqslant|(\tilde p_\lambda-p_0,\frac{d^2}{dt^2}K(\gamma(t)))|+ (p_0,\frac{d^2}{dt^2}K(\gamma(t)))\\
&<\frac{\delta}{2} + (p_0,\frac{d^2}{dt^2}K(\gamma(t))) \\
&<-\frac{\delta}{2}\quad \forall t \in (0,1).
\end{aligned}
\end{align*}



This implies that the function $t \mapsto \langle\tilde{\eta}_{\lambda},\gamma(t)\rangle$ is  of class $C([0,1])$, twice differentiable in the interval $(0,1)$ and its second derivative assumes negative values in the open interval $(0,1)$. Therefore, since the function is strictly concave in $(0,1)$, it has a unique maximizer $\tilde t$, that is
\begin{equation}
    \tilde u_{\lambda}^{i,1}= \gamma (0)=\gamma (\tilde{t})= \gamma (1)= \tilde u_{\lambda}^{i,2},
\end{equation}
where we denote $\gamma (\tilde{t}):= \tilde u_{\lambda}^{i}$.
We immediately obtain contradiction, which means that  $u \mapsto \langle\tilde{\eta}_{\lambda},u\rangle$ has indeed a unique maximizer $\tilde u_{\lambda}^i$ in $B_\varepsilon(u_0^i)$ for each $i$.

In particular, given that the optimality conditions \eqref{ocnoise} ensure $\langle\tilde{\eta}_{\lambda},u\rangle \leqslant 1$ for all $u \in \calB$, we note that $\Tilde{u}_{\lambda}^i$ is the unique point that could possibly achieve $\langle\tilde{\eta}_{\lambda},u\rangle=1$ in $B_\varepsilon(u_0^i)$.
We will show that, with our assumptions, $\tilde u_{\lambda}^i$  indeed attains $\langle\tilde{\eta}_{\lambda},\tilde u_{\lambda}^i\rangle=1$ for every $i$.
By applying Proposition \ref{prop2}, we obtain that, for every $\tilde \mu_\lambda \in M^+(\mathcal{B})$ that represents $\tilde 
 u_\lambda$ and such that $G(u_\lambda) = \|\tilde  \mu_\lambda\|_{M(\mathcal{B})}$, it holds that ${\rm supp}\, \tilde  \mu_\lambda \cap B_\varepsilon(u_0^i) \subset {\rm Exc}(\tilde  u_\lambda) \cap B_\varepsilon(u_0^i)$ for every $i$. We note that, up to this point, ${\rm Exc}(\tilde  u_\lambda) \cap B_\varepsilon(u_0^i)$ may also be an empty set.
 Furthermore, when ${\rm Exc}(\tilde  u_\lambda) \cap B_\varepsilon(u_0^i)\neq \emptyset$, it is necessarily equivalent to the isolated maximizer $\{\tilde u_{\lambda}^i\}$, that is $\langle\tilde{\eta}_{\lambda},\tilde u_{\lambda}^i\rangle=1$.

  Since, by Lemma \ref{Lemma1}, the support of the measure $\Tilde{\mu}_{\lambda}$ is contained in $\text{Exc}^{\varepsilon}(u_0)$, it holds that 
\begin{align*}
\Tilde{\mu}_{\lambda} = \ds{\sum_{i=1}^n} \Tilde{c}_{\lambda}^{i} \delta_{\tilde{u}_{\lambda}^i},
\end{align*}
 where $\Tilde{c}_{\lambda}^{i}  \geqslant 0$ for each $i$.
 Therefore, since $\tilde  \mu_\lambda$ represents $\tilde u_\lambda$, it also holds that 
\begin{align*}
    \Tilde{u}_\lambda = \ds{\sum_{i=1}^n} \Tilde{c}_{\lambda}^{i} \tilde{u}_{\lambda}^i.
\end{align*}

In particular, when $\Tilde{c}_{\lambda}^{i} = 0$, it means that ${\rm supp}\, \tilde  \mu_\lambda \cap B_\varepsilon(u_0^i)$ is an empty set. In other words, the term in the linear combination associated with $\Tilde{c}_{\lambda}^{i} = 0$ vanishes, regardless of the behaviour of $\Tilde{u}_{\lambda}^{i}$. On the contrary, when $\Tilde{c}_{\lambda}^{i} > 0$, it implies that ${\rm supp}\, \tilde  \mu_\lambda \cap B_\varepsilon(u_0^i)$ is not empty. In this case, the term in the linear combination associated with $\Tilde{c}_{\lambda}^{i} > 0$ is present, and it is necessary that ${\rm supp}\, \tilde  \mu_\lambda \cap B_\varepsilon(u_0^i) = {\rm Exc}(\tilde  u_\lambda) \cap B_\varepsilon(u_0^i) = \{\tilde u_{\lambda}^i\}$. Therefore, it is sufficient to prove that $\tilde c_{\lambda}^i$ is greater than zero for every $i$ to obtain the desired result.
First, we note that $\tilde c^i_{\lambda}$ is bounded for $(\lambda, w)\in N_{\alpha, \lambda_0}$ and for every $i$.
Indeed, by the minimality of $\tilde u_\lambda$ for \ref{def:minprobsoftnoise}, it holds that 
\begin{align*}
    \lambda G(\Tilde{u}_{\lambda})\leqslant \frac{1}{2}\|K\tilde u_\lambda - y_0 - w\|_Y^2 + \lambda G(\tilde u_\lambda) \leqslant \frac{\|w\|_Y^2}{2} + \lambda G(u_0),
\end{align*}
where we used that $Ku_0-y_0=0$.
Therefore, dividing by $\lambda>0$, we obtain:
\begin{align*}
   G(u_0) + \frac{\|w\|^2_Y}{2\lambda}   \geqslant G(\tilde u_\lambda) = \|\tilde \mu_\lambda\|_{M(\mathcal{B})} = \sum_{i=1}^n\tilde c_{\lambda}^i.
\end{align*}
Since $u_0$, minimizer of \ref{def:minprobhard}, belongs to $\dom(G)$, we have that $\{\tilde c_\lambda^i\}_{i=1}^{n}$ are bounded for all possible choices of $(\lambda, w)\in N_{\alpha, \lambda_0}$.

Now, let us prove that $\tilde c_\lambda^i >0$ for every $i=1,\ldots, n$. Assume by contradiction that there exists a sequence $(\lambda_k, w_k)_{k\in \NN} \subset N_{\alpha, \lambda_0}$ with $\lambda_k \rightarrow 0$ and consequently $w_k \rightarrow 0$ ($\|w_k\|_Y\leqslant \alpha \lambda_k$) such that, for some $j$, it holds that 
\begin{align}
  \Tilde{c}_{\lambda_k}^{j} &\rightarrow 0\quad \text{as} \quad k\rightarrow\infty.
\end{align}
By  weak* compactness of the sublevel sets of $G$ (Assumption \ref{eq:assG}) and boundedness of $\{\tilde c_\lambda^i\}_{i=1}^n$ proved above, there exists a subsequence $(\lambda_k, w_k)_{k\in\NN} \subset N_{\alpha,\lambda_0}$ (not relabelled) such that 
\begin{align}
   \tilde{u}_{\lambda_k}^j &\stackrel{*}{\weakarrow} \hat{u}_{0}^j \quad \text{as} \quad k\rightarrow\infty,\\
   \Tilde{c}_{\lambda_k}^{j} &\rightarrow \hat{c}_{0}^j \quad \text{as} \quad k\rightarrow\infty,
\end{align}
where $\hat{c}_{0}^j=0$.
In particular, it holds that
\begin{align}
\Tilde{u}_{\lambda_k} \stackrel{*}{\weakarrow} \sum_{i=1}^n \hat{c}_{0}^{i} \hat{u}_{0}^i\quad \text{as} \quad k\rightarrow\infty.
\end{align}
Thanks to the linear independence of $\{Ku_0^i\}_{i=1}^n$, we can apply Lemma \ref{uniquerepresentation}, establishing the uniqueness of the minimizer $u_0$, and combining it  with the convergence result from \cite[Theorem 3.5]{Hofmann}, which states that $\Tilde{u}_{\lambda_k} \stackrel{*}{\weakarrow} u_0$, we can conclude that 
\begin{align*}
    u_0 = \sum_{i=1}^n \hat{c}_{0}^{i} \hat{u}_{0}^i.
\end{align*}
Furthermore, since $u_0$ is uniquely representable again by Lemma \ref{uniquerepresentation}, we have that $\hat{c}_{0}^{i}=c_0^i$ and $\hat{u}_{0}^{i}=u_0^i$ for all $i$. 
Thus, since $c_0^i\neq 0$ for all $i=1,\ldots,n$, we reach a contradiction.
We proved that 
\begin{align*}
    \Tilde{u}_\lambda = \ds{\sum_{i=1}^n} \Tilde{c}_{\lambda}^{i} \tilde{u}_{\lambda}^i,
\end{align*}
 where $\Tilde{c}_{\lambda}^{i}  > 0$ for every $i$.  This implies that ${\rm supp}\, \tilde  \mu_\lambda \cap B_\varepsilon(u_0^i) = {\rm Exc}(\tilde  u_\lambda) \cap B_\varepsilon(u_0^i)=\{\tilde u_{\lambda}^i\}$, that is $\langle\tilde{\eta}_{\lambda},\tilde u_{\lambda}^i\rangle=1$ for every $i$. This also confirms that $\tilde u_{\lambda}^i\neq 0$ for every $i$, thus concluding the proof.

 \end{proof}
	\section{Exact Sparse Representation Recovery}\label{Sec5}
 In this section, we present the main result of this paper. Under the assumption that $u_0 = \sum_{i=1}^n c_0^i u_0^i$ satisfies the MNDSC in Definition \ref{MNDSC} and the linear independence of the measurements $\{Ku_0^i\}_{i=1}^n$, we can establish that the solution to \ref{def:minprobsoftnoise} is both unique and \emph{uniquely representable} as  in Definition \ref{unirep}. Moreover, this solution can be expressed as a linear combination of exactly $n$ extreme points, matching the number of extreme points in the solution $u_0$ to \ref{def:minprobhard}.
 Finally, as the regularization parameter $\lambda$ and the noise $w$ approach zero, the extreme points and their corresponding coefficients converge to those exhibited by the original solution $u_0$.


\begin{theorem}[Exact Sparse Representation Recovery]\label{mainthm}
 Assume that  $u_0 = \sum_{i=1}^n c_0^i u_0^i$, where $c_0^i >0$ and $u_0^i \in \mathcal{B}\setminus \{0\}$ for every $i=1,\ldots,n$, satisfies the Metric
 Non-Degenerate Source Condition, and  let $\{Ku_0^i\}_{i=1}^n$ be linearly independent.
Then, for $\varepsilon>0$ small enough, there exists $\alpha>0, \lambda_0>0$, such that, for all $(\lambda, w) \in N_{\alpha, \lambda_0}$, the solution $\tilde{u}_\lambda$ to \ref{def:minprobsoftnoise} is unique and admits a unique representation of the form:
\begin{align}
\ds{\tilde{u}_\lambda=\sum_{i=1}^n \tilde{c}_{\lambda}^i \tilde{u}_{\lambda}^i},
\end{align}
where $\tilde{u}_{\lambda}^i \in B_{\varepsilon}(u_0^i) \setminus \{0\}$ such that $\langle \tilde \eta_{\lambda}, \tilde{u}_{\lambda}^i\rangle=1$,  
$\tilde{c}_{\lambda}^i > 0$ and     $|\tilde c^i_\lambda - c_0^i| \leqslant\varepsilon$ for every $i=1,\ldots,n$.
\end{theorem}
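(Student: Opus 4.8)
The plan is to assemble the statement from Lemmas~\ref{Lemma1}, \ref{uniquerepresentation} and~\ref{Lemma2}, which already produce the sparse representation of any solution, and then add the three ingredients not yet contained in those lemmas: uniqueness of the minimizer $\tilde u_\lambda$ as an element of $X$, uniqueness of the representation (equivalently, of the representing measure in $C_{\tilde u_\lambda}$), and the quantitative bound $\abs{\tilde c_\lambda^i-c_0^i}\leqslant\varepsilon$. I would start by fixing $\varepsilon>0$ small enough that: (a) Lemmas~\ref{Lemma1} and~\ref{Lemma2} apply; (b) the closed balls $B_\varepsilon(u_0^1),\dots,B_\varepsilon(u_0^n)$ are pairwise disjoint, which is possible since the $u_0^i$ are $n$ distinct points of the metric space $(\mathcal B,d_{\mathcal B})$ by condition~$(ii)$ of the MNDSC; and (c) for every choice $v^i\in B_\varepsilon(u_0^i)$ the family $\{Kv^i\}_{i=1}^n$ is linearly independent in $Y$. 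Property~(c) holds for $\varepsilon$ small because $K$ restricted to the compact metric space $\mathcal B$ is uniformly continuous into $Y$, so $\norm{Kv^i-Ku_0^i}_Y$ is uniformly small on $B_\varepsilon(u_0^i)$, and linear independence of finitely many vectors is an open condition. With $\varepsilon$ fixed I take $\alpha,\lambda_0$ from Lemmas~\ref{Lemma1} and~\ref{Lemma2}; these constants get shrunk once more at the end.

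For existence of the representation I simply invoke Lemma~\ref{Lemma2}: for $(\lambda,w)\in N_{\alpha,\lambda_0}$ it yields $\tilde u_\lambda^i\in B_\varepsilon(u_0^i)\setminus\{0\}$ with $\langle\tilde\eta_\lambda,\tilde u_\lambda^i\rangle=1$, $\tilde c_\lambda^i>0$ and $\tilde u_\lambda=\sum_i\tilde c_\lambda^i\tilde u_\lambda^i$, and its proof identifies $\tilde u_\lambda^i$ as the unique maximizer of $u\mapsto\langle\tilde\eta_\lambda,u\rangle$ on $B_\varepsilon(u_0^i)$, with $\mathrm{Exc}(\tilde u_\lambda)\cap B_\varepsilon(u_0^i)=\{\tilde u_\lambda^i\}$. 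For uniqueness of $\tilde u_\lambda$ I use that $\tilde p_\lambda$ is the \emph{unique} solution of~\ref{def:dualminprobsoftnoise}, so by the optimality conditions~\eqref{ocnoise} every minimizer has the same image $K\tilde u_\lambda=y_0+w-\lambda\tilde p_\lambda$, hence the same certificate $\tilde\eta_\lambda=K_*\tilde p_\lambda$ and the same maximizers $\tilde u_\lambda^i$; if $\tilde u_\lambda,\tilde u_\lambda'$ are two minimizers with coefficients $\tilde c_\lambda^i,(\tilde c_\lambda^i)'$, then $0=K(\tilde u_\lambda-\tilde u_\lambda')=\sum_i(\tilde c_\lambda^i-(\tilde c_\lambda^i)')K\tilde u_\lambda^i$, and~(c) forces $\tilde c_\lambda^i=(\tilde c_\lambda^i)'$, so $\tilde u_\lambda=\tilde u_\lambda'$. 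The same linear-independence argument shows that the coefficients in the representation are uniquely determined, and that $\tilde u_\lambda$ is uniquely representable in the sense of Definition~\ref{unirep}: any $\tilde\mu_\lambda\in C_{\tilde u_\lambda}$ has, by Proposition~\ref{prop2} and $\mathrm{Exc}(\tilde u_\lambda)\cap B_\varepsilon(u_0^i)=\{\tilde u_\lambda^i\}$, support contained in $\{\tilde u_\lambda^1,\dots,\tilde u_\lambda^n\}$, hence $\tilde\mu_\lambda=\sum_i a^i\delta_{\tilde u_\lambda^i}$ with $\sum_i a^i\tilde u_\lambda^i=\tilde u_\lambda$, and applying $K$ gives $a^i=\tilde c_\lambda^i$.

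It remains to prove $\abs{\tilde c_\lambda^i-c_0^i}\leqslant\varepsilon$, which I would do by contradiction. If it fails for every choice of $\alpha,\lambda_0$, there are $(\lambda_k,w_k)\in N_{\alpha,\lambda_0}$ with $\lambda_k\to0$ (hence $w_k\to0$) and an index $j$ with $\abs{\tilde c_{\lambda_k}^j-c_0^j}>\varepsilon$ for all $k$. The coefficients are bounded (as in the proof of Lemma~\ref{Lemma2}, $\sum_i\tilde c_{\lambda_k}^i=G(\tilde u_{\lambda_k})\leqslant G(u_0)+\tfrac12\alpha^2\lambda_k$), and each $\tilde u_{\lambda_k}^i$ lies in the weak* compact set $B_\varepsilon(u_0^i)$; so after passing to a subsequence, $\tilde c_{\lambda_k}^i\to\hat c^i\geqslant0$ and $\tilde u_{\lambda_k}^i\weakstar\hat u^i\in B_\varepsilon(u_0^i)$, whence $\tilde u_{\lambda_k}\weakstar\sum_i\hat c^i\hat u^i$. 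On the other hand, \cite[Theorem~3.5]{Hofmann} together with the uniqueness of the minimizer of~\ref{def:minprobhard} (Lemma~\ref{uniquerepresentation}) gives $\tilde u_{\lambda_k}\weakstar u_0$, so $u_0=\sum_i\hat c^i\hat u^i$. Moreover $\sum_i\hat c^i=\lim_k G(\tilde u_{\lambda_k})=G(u_0)$, using the displayed bound for the $\limsup$ and weak* lower semicontinuity of $G$ for the $\liminf$. Hence $\hat\mu:=\sum_i\hat c^i\delta_{\hat u^i}\in C_{u_0}$, and the unique representability of $u_0$ (Lemma~\ref{uniquerepresentation}) forces $\hat\mu=\sum_i c_0^i\delta_{u_0^i}$; restricting these atomic measures to the pairwise disjoint balls $B_\varepsilon(u_0^i)$ yields $\hat c^i\delta_{\hat u^i}=c_0^i\delta_{u_0^i}$, and since $c_0^i>0$ this gives $\hat c^i=c_0^i$ (and $\hat u^i=u_0^i$). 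In particular $\tilde c_{\lambda_k}^j\to c_0^j$, contradicting $\abs{\tilde c_{\lambda_k}^j-c_0^j}>\varepsilon$. Thus, after one further shrinking of $\alpha$ and $\lambda_0$, the estimate holds on $N_{\alpha,\lambda_0}$, which completes the proof.

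The main obstacle is the last step. One must correctly pass to weak* limits of both the coefficients and the extreme points, verify that the limiting atomic measure genuinely belongs to $C_{u_0}$ — which requires the convergence $G(\tilde u_{\lambda_k})\to G(u_0)$ of the regularizer values, not merely weak* convergence of the iterates — and then use simultaneously the uniqueness of the representing measure of $u_0$ and the disjointness of the balls $B_\varepsilon(u_0^i)$ to separate the atoms. Keeping track of the nested smallness requirements on $\varepsilon$, $\alpha$ and $\lambda_0$ across Lemmas~\ref{Lemma1}, \ref{Lemma2} and this final compactness argument also requires some care, but everything else is routine bookkeeping once those lemmas are available.
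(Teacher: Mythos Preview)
Your argument is correct and complete for the statement as written. The route you take, however, differs from the paper's in the step that establishes uniqueness of the coefficients $\tilde c_\lambda^i$ and the bound $|\tilde c_\lambda^i-c_0^i|\leqslant\varepsilon$.

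You obtain uniqueness directly from the identity $K\tilde u_\lambda=y_0+w-\lambda\tilde p_\lambda$ together with the openness of linear independence (your property~(c)), and you get the quantitative bound by a compactness--contradiction argument that passes to the limit in $C_{u_0}$. The paper instead encodes the first-order optimality conditions as an implicit equation $f(c,(\lambda,w))=0$ in the coefficient variable $c\in\R^n$, verifies that $(Df)_c(c_0,(0,0))$ is the Gram matrix of $\{Ku_0^i\}$ and hence invertible, and applies a Goursat-type implicit function theorem (Theorem~\ref{GT}) that requires differentiability only in $c$. This yields a \emph{continuous} map $(\lambda,w)\mapsto\tilde c_\lambda$ on a neighbourhood of $(0,0)$, from which both uniqueness and the $\varepsilon$-bound follow at once.

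The trade-off is clear: your proof is shorter and avoids the implicit function theorem entirely, but the paper's approach delivers, at no extra cost, the continuity of the coefficients in $(\lambda,w)$---a property highlighted in the introduction and used to interpret the result as a genuine recovery statement. Your property~(c) and the paper's invertibility of the Gram matrix are essentially the same observation, so the two arguments share the same underlying geometric input; they diverge only in how that input is converted into uniqueness and stability.
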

\begin{proof}
Since the MNDSC holds for $u_0$ and $\{Ku_0^i\}_{i=1}^n$ are linearly independent, we can apply Lemma \ref{Lemma2}. Therefore, we know that for every $\varepsilon>0$ small enough, there exist $\alpha>0, \lambda_0>0$ such that, for all $(\lambda, w) \in N_{\alpha, \lambda_0}$, any solution $\tilde{u}_\lambda$ is composed of exactly $n$ extreme points, i.e. \begin{align*}\ds{\tilde{u}_{\lambda}=\sum_{i=1}^n \Tilde{c}_{\lambda}^{i} \tilde{u}_{\lambda}^i},
\end{align*} where $\tilde{c}_{\lambda}^i > 0$ and $\tilde{u}_{\lambda}^i \in B_{\varepsilon}(u_0^i) \setminus \{0\}$ such that $\langle \tilde \eta_{\lambda}, \tilde{u}_{\lambda}^i\rangle=1$ for every $i=1,\ldots, n$.
In Lemma \ref{Lemma2} we also showed the uniqueness of the extreme points $\tilde{u}_{\lambda}^{i}$ for every $i=1,\ldots,n$. To complete our proof, it is necessary to prove the uniqueness of the coefficients $\Tilde{c}_{\lambda}^{i}$ as well, in such a way that $\tilde{u}_{\lambda}$ is unique and admits a unique representation.

Define the function $f = (f^1, \ldots, f^n):  \RR^n \times\Na\rightarrow \mathbb{R}^n$ as 
\begin{equation}\label{ocf}
f^j(c,(\lambda,w))=
\displaystyle \langle K_*(K\sum_{i=1}^nc^i\Tilde{u}_{\lambda}^i-y_0-w),\tilde{u}_{\lambda}^j\rangle+\lambda,\quad j=1,\ldots,n.
\end{equation}

Since $\Tilde{u}_{\lambda}$ satisfies the optimality conditions \eqref{ocnoise} and using the fact that $\langle K_*\tilde p_{\lambda},\tilde{u}_{\lambda}^j\rangle=\langle \tilde{\eta}_\lambda,\Tilde{u}_{\lambda}^j\rangle=1$, we obtain the following implicit equations for $\Tilde{c}_{\lambda}=(\Tilde{c}_{\lambda}^{1},\dots,\Tilde{c}_{\lambda}^{n})$:
\begin{equation*}
    f^j(\Tilde{c}_{\lambda},(\lambda,w))=\langle K_*(K\sum_{i=1}^n\Tilde{c}_{\lambda}^{i} \Tilde{u}_{\lambda}^i-y_0-w),\tilde{u}_{\lambda}^j\rangle+\lambda=-\lambda\langle K_*\tilde p_{\lambda},\tilde{u}_{\lambda}^j\rangle+\lambda=0.
          \end{equation*}
           For the case where $\lambda=0$, $w=0$ and $c \in \R^n$, we set: 
\begin{equation*}
    f^j(c,(0,0))=\langle K_*(K \sum_{i=1}^n c^i u_0^i-y_0), u_0^j\rangle.
\end{equation*}
 Note that, for $c=c_0=(c_{0}^{1},\ldots,c_{0}^{n})$, it holds that $f^j(c_0,(0,0)) = 0$, because $Ku_0 = y_0$.
          
 Our next objective is to apply our version of the implicit function theorem, as  presented in Theorem \ref{GT}. Note that Theorem \ref{GT} requires continuity of $f^j$ in all variables $(c,(\lambda,w))$, demanding its differentiability only with respect to $c$.
 Therefore, let us start proving the continuity of the function $(c,(\lambda, w)) \mapsto f(c, \lambda, w)$ in the domain $\RR^n\times \Na$. 
 
For this purpose, we first prove that $\Tilde{u}_{\lambda}^i$ is weak* continuous in $X$ 
    with respect to $(\lambda,w) \in \Na$ for every $i=1,\ldots, n$.
    Consider a sequence $(\lambda_k,w_k)_{k\in\NN} \subset \Na$ converging to $(\lambda,w)\in \Na$ as $k\rightarrow\infty$. Note that, due to the weak* compactness of the sublevel sets of $G$ (Assumption \ref{eq:assG}), and by applying similar reasoning as in the proof of Lemma \ref{Lemma2}, there exist $\bar{u}_\lambda^i \in B_\varepsilon(u_0^i)$ and $\bar c_\lambda^i >0$ such that, up to subsequences,
\begin{align*}
& \tilde{u}_{\lambda_k}^i \stackrel{*}{\weakarrow} \bar{u}_\lambda^i \quad \text {as} \quad k \rightarrow \infty, \\
& \tilde{c}_{\lambda_k}^i \rightarrow \bar{c}_\lambda^i \quad \text {as} \quad k \rightarrow \infty,
\end{align*}
implying that along such subsequence it holds that
\begin{align}
    \tilde u_{\lambda_k} \weakstar \sum_{i=1}^n \bar c_\lambda^i \bar u_\lambda^i\quad \text {as} \quad k \rightarrow \infty.
\end{align}
If we consider  $\lambda_k\rightarrow 0$ and consequently  $w_k\rightarrow 0$, then \cite[Theorem 3.5]{Hofmann} implies that $ \tilde u_{\lambda_k} \weakstar\sum_{i=1}^n \bar c_0^i \bar u_0^i$, where $\sum_{i=1}^n \bar c_0^i \bar u_0^i$ is a minimizer of \ref{def:minprobhard}. This result, combined with the uniqueness of the minimizer proved in Lemma \ref{uniquerepresentation}, leads to the conclusion that $u_0 = \sum_{i=1}^n \bar{c}_{0}^{i} \bar{u}_{0}^i.$ Furthermore, since $u_0$ is uniquely representable again by Lemma \ref{uniquerepresentation}, we deduce that $\bar{c}_{0}^{i}=c_0^i$ and $\bar{u}_{0}^{i}=u_0^i$ for every $i = 1,\ldots, n$. 

On the other hand, if $\lambda_k\rightarrow \lambda$ with $\lambda>0$ and $w_k\rightarrow w$ with $w\geqslant 0$, the stability Theorem \ref{stability} ensures that $\sum_{i=1}^n \bar c_\lambda^i \bar u_\lambda^i $ is a minimizer of \ref{def:minprobsoftnoise}. Then, applying Lemma \ref{Lemma2}, we deduce that     $\bar u_\lambda^i=\tilde u_{\lambda}^i$ holds true for every $i = 1,\ldots, n$.
This conclusion establishes the weak* continuity of $\tilde u^i_\lambda$ with respect to $(\lambda,w)$ for each $i$. 
Moreover, with a similar reasoning it holds that any collection of $\tilde c_\lambda^i$ in the linear combination that represents $\tilde u_\lambda$ are such that $\tilde c_\lambda^i \rightarrow c_0^i$ for $\lambda \rightarrow 0$ and $(\lambda,w) \in \Na$.

Now, let us rewrite the functions $f^j$  as
\begin{align}\label{eq:terms}
\begin{aligned}
   f^j(c,(\lambda,w))& = \langle K_*(K\sum_{i=1}^nc^i\Tilde{u}_{\lambda}^i-y_0-w),\tilde{u}_{\lambda}^j\rangle+\lambda \\
   &= ((K\sum_{i=1}^nc^i\Tilde{u}_{\lambda}^i-y_0-w),K\tilde{u}_{\lambda}^j)+\lambda\\
&=\sum_{i=1}^nc^i(K\Tilde{u}_{\lambda}^i,K\tilde{u}_{\lambda}^j)-(y_0+w,K\tilde{u}_{\lambda}^j)+\lambda.
    \end{aligned}
\end{align}   
Consider the converging sequences $c^i_k\rightarrow c^i$, $\lambda_k\rightarrow\lambda$ and $w_k\rightarrow w$ for $(\lambda_k,w_k)_{k\in\NN} \subset \Na$. Note that
\begin{equation}\label{convergenceI}
\begin{aligned}
    \left|(K\tilde{u}_{\lambda_k}^i , K\tilde{u}_{\lambda_k}^j)-(K\tilde u_{\lambda}^i, K\tilde u_{\lambda}^j)\right| & =  \left|(K\tilde{u}_{\lambda_k}^i , K\tilde{u}_{\lambda_k}^j)- (K\tilde{u}_{\lambda_k}^i , K\tilde{u}_{\lambda}^j) -(K\tilde u_{\lambda}^i, K\tilde u_{\lambda}^j) + (K\tilde{u}_{\lambda_k}^i , K\tilde{u}_{\lambda}^j) \right|\\
    &\leqslant \left|(K\tilde{u}_{\lambda_k}^i, K\tilde{u}_{\lambda_k}^j -  K\tilde u_{\lambda}^j)\right| + \left|(K\tilde{u}_{\lambda_k}^i - K\tilde u_{\lambda}^i, K\tilde u_{\lambda}^j)\right| \\
    & \leqslant \|K\|_{\mathcal{L}(X, Y)} \sup _{u \in \calB}\left\|u\right\|_X \|K\tilde{u}_{\lambda_k}^j - K\tilde u_{\lambda}^j\|_Y \\
    &+\|K\|_{\mathcal{L}(X, Y)} \sup _{u \in \calB}\left\|u\right\|_X \|K\tilde{u}_{\lambda_k}^i - K\tilde u_{\lambda}^i\|_Y.
    \end{aligned}
\end{equation}
This implies that $(K\tilde{u}_{\lambda_k}^i, K\tilde{u}_{\lambda_k}^j)  \rightarrow (K\tilde{u}_{\lambda}^i, K\tilde{u}_{\lambda}^j)$, because $K$ is weak*-to-strong continuous in ${\rm dom}(G)$ and the sublevel sets of $G$ are norm-bounded. 
Therefore, the term $\sum_{i=1}^nc^i(K\Tilde{u}_{\lambda}^i,K\tilde{u}_{\lambda}^j)$ is continuous.
A similar computation shows that $(y_0+w,K\tilde{u}_{\lambda}^j)$ is also continuous,
implying that the function $f(c,(\lambda,w))$ is continuous on $\RR^n\times \Na$.

To successfully apply the Theorem \ref{GT}, we also need to verify that derivative of $f$ with respect to $c$ not only exists but is also continuous on $\RR^n\times \Na$. 
Referring to \eqref{eq:terms}, we observe that
\begin{align*}
\frac{\partial f^j}{\partial c^i}=\left(K \tilde{u}_{\lambda}^i, K \tilde{u}_{\lambda}^j\right).
\end{align*}
Therefore, the Jacobian matrix of $f$ with respect to the variable $c$ becomes:
\begin{align*}
(D f)_c(c,(\lambda,w))=\left[\begin{array}{cccc}
	\left\|K \tilde{u}_{\lambda}^1\right\|_Y^2 & \left(K \tilde{u}_{\lambda}^2, K \tilde{u}_{\lambda}^1\right)  & \cdots &  \left(K \tilde{u}_{\lambda}^n, K \tilde{u}_{\lambda}^1\right)  \\
	\vdots & \left\|K \tilde{u}_{\lambda}^2\right\|_Y^2 & \cdots & \vdots \\
	\vdots & \cdots & \ddots & \vdots  \\
	\left(K \tilde{u}_{\lambda}^1, K \tilde{u}_{\lambda}^n\right) & \cdots & \cdots & \left\|K \tilde{u}_{\lambda}^n\right\|_Y^2
\end{array}\right].
\end{align*}

Once more, thanks to the weak* continuity of $\tilde{u}_\lambda^i$ with respect to $(\lambda,w)$ and the weak*-to-strong continuity of $K$ in dom$(G)$, we are able to make  analogous computations to those in \eqref{convergenceI}. This immediately gives us the continuity of $(Df)_c$ on $\RR^n\times \Na$. 

As previously highlighted, when considering $(c_0,(0,0))\in \RR^n\times \Na$, it holds that $f(c_0,(0,0))=0$. Thus, to apply Theorem \ref{GT}, the remaining requirement is to show the invertibility of $(Df)_c(c_0,(0,0))$.
To establish this, we aim to prove that the columns of the Jacobian matrix computed at $(c_0,(0,0))$ are linearly independent. In other words, we need to show that
\begin{align*}
\left\{\begin{array}{c}
\left(K u_{0}^1, \sum_{i=1}^n \alpha_i K u_{0}^i\right)=0 \\
\vdots \\
\left(K u_{0}^n, \sum_{i=1}^n \alpha_i K u_{0}^i\right)=0
\end{array}\right.\Rightarrow \alpha_1=\cdots=\alpha_n=0.
\end{align*}
The previous system of equations implies that
\begin{align*}
\left\{\begin{array}{c}
\alpha_1\left(K u_{0}^1, \sum_{i=1}^n \alpha_i K u_{0}^i\right)=0, \\
\vdots \\
\alpha_n\left(K u_{0}^n, \sum_{i=1}^n \alpha_i K u_{0}^i\right)=0.
\end{array}\right.
\end{align*}
Summing up all the equations, we obtain that
\begin{align*}
0=
\left(\sum_{i=1}^n \alpha_i K u_{0}^i, \sum_{i=1}^n \alpha_i K u_{0}^i\right)=\left\|\sum_{i=1}^n \alpha_i K u_{0}^i\right\|_Y^2,
\end{align*}
which implies that $\sum_{i=1}^n \alpha_i K u_{0}^i=0$. Since $\{Ku_0^i\}_{i=1}^n$ are linearly independent, we conclude that
\begin{align*}
\alpha_1=\cdots=\alpha_n=0.
\end{align*}
Finally, we can apply  Theorem \ref{GT}. There exist two open balls according to their respective topologies, namely $B_r(c_0)$ and $B_s((0,0))$, where $r,s>0$, such that for each $(\Bar{\lambda},\Bar{w}) \in B_s((0,0))$, there is a unique $\Bar{c}_{\lambda,w} \in B_r(c_0)$ satisfying the condition  $f(\Bar{c}_{\lambda,w}, (\Bar{\lambda},\Bar{w}))=0$.
Furthermore, there exists also a continuous function 
\begin{equation}\label{functiong}
\begin{gathered}
g: B_s((0,0)) \rightarrow B_r(c_0) \\
(\Bar{\lambda},\Bar{w}) \mapsto g(\Bar{\lambda},\Bar{w}),
\end{gathered}
\end{equation}
uniquely defined near $(0,0)$ by the condition $g(\Bar{\lambda},\Bar{w})=\Bar{c}_{\lambda,w}$.
Since $\Bar{c}_{\lambda,w}$ is the only $c$ that satisfies the equation $f(c,(\lambda,w))=0$ for $(\lambda,w)\in\Na$ in $B_r(c_0)$, and any collection of coefficients $\tilde c_\lambda^i$ converges to $\tilde c_0$ for $\lambda \rightarrow 0$ and $(\lambda,w) \in \Na$, we conclude that
$$g(\Bar{\lambda},\Bar{w})=\Bar{c}_{\lambda,w}=\Tilde{c}_{\lambda}\quad \forall (\Bar{\lambda},\Bar{w}) \in B_s((0,0)).$$
Hence, we proved that also $\Tilde{c}_{\lambda}$ is unique and it is a continuous function of $(\lambda,w) \in \Na$, where for $(\lambda,w)=(0,0)$ we set
$g(0,0)=\Bar{c}_{0,0}=\Tilde{c}_0=c_0$.
     \end{proof}

\begin{remark}
Here, we would like to discuss the result obtained using our general theorem in comparison with the one presented in \cite[Theorem 2, Section 3]{Peyre}. 
First, we note that they consider a linear combination of Dirac deltas, which leads them to work with positions $\{\tilde x_{\lambda}^i\}_{i=1}^n$. On the other hand, in our analysis, we are dealing with generic extreme points $\{\tilde u^i_{\lambda}\}_{i=1}^n$.

Another difference compared to \cite{Peyre} is that we are not requiring the linear independence of the first derivatives of $Ku$ computed at $u^i_0$, which in principle may not even exist. Since in their work, they consider a convolutional operator $K: M(\mathbb{T}) \rightarrow \mathrm{L}^2(\mathbb{T})$ defined as
\begin{equation}
Ku: s \mapsto \int_{\mathbb{T}} \varphi(x-s) \mathrm{d} u(x)\quad \forall u \in M(\mathbb{T})
\end{equation}
and $u_0$ is a linear combination of Dirac deltas $\delta_{x_0^i}$, their requirement is essentially equivalent to demanding the linear independence of  $\{ \varphi'\left(x_0^i-\cdot\right)\}_{i=1}^n$. 
As a consequence, we are not able to achieve a rate of decay for coefficients $\tilde c_{\lambda}^i$ and functions $\tilde u^i_{\lambda}$ of order $O(\lambda)$ when $\|w\|_Y \sim \lambda$ as in \cite{Peyre}.
This arises from the fact that in Theorem \ref{mainthm}, we do not rely on the differentiability of the optimality conditions with respect to the extreme points $\tilde u_{\lambda}^i$, because when $\mathcal{B}$ is a general metric space, such property might not even hold.

Comparing the proof of Theorem \ref{mainthm} with \cite[Theorem 2, Section 3]{Peyre}, it becomes evident that the latter is based on the application of the implicit function 
theorem to a $C^1$ function $((c, x),(\lambda, w))\in (\RR^n\times \TT^n)\times (\RR\times L^2(\TT^n)) \mapsto f((c, x), (\lambda, w))$, which explicitly depends  on the positions $\{\tilde x_{\lambda}^i\}_{i=1}^n$. In our proof, however, the function $f$ does not explicitly depend on $\{\tilde u^i_{\lambda}\}_{i=1}^n$. Thus, $\tilde u_{\lambda}^i$ is no longer treated as a variable for every $i=1,\ldots,n$, and generally it is not differentiable. We only prove its weak* continuity.
 Therefore, we have to resort to a weak version of the implicit function theorem, (see Theorem \ref{GT}), which does not require differentiability of the function $f$ with respect to all variables.
One could try to achieve the decay $O(|\lambda|)$ either by introducing a notion of derivative in metric spaces, known as the \emph{slope} (as defined in \cite[Definition 1.2.4, Chapter 1]{ags}), and treating $\tilde u^i_{\lambda}$ as a variable for every $i=1,\ldots,n$, or by imposing additional structure on the set of extreme points $\mathcal{B}$ to enable differentiability for $\tilde u_{\lambda}^i$ with respect to $(\lambda,w)$ for every $i=1,\ldots,n$.
\end{remark}
		\section{Examples}\label{SecExam}
  \subsection{Radon measures and total variation norm regularizer}\label{SecRadon}
  We want to apply the main Theorem \ref{mainthm} with $X$ being the space of Radon measures on the one-dimensional torus $\mathbb{T}$, denoted by $M(\mathbb{T})$, and $Y = L^2(\mathbb{T})$. We aim to show that with this particular setting our result recovers all the assumptions made by Duval and Peyré in  \cite{Peyre}, and achieve similar results. 
 Note that 
 \begin{itemize}
	\item ${M}(\mathbb{T})$ endowed with the total variation norm is a Banach space whose pre-dual is $C(\mathbb{T})$, the space of continuous functions on $\mathbb{T}$, i.e. $M(\mathbb{T}) \simeq C(\mathbb{T})^*$;
  \item $G=\|\cdot\|_{M(\mathbb{T})}:M(\mathbb{T})\rightarrow[0,+\infty]$ is the total variation norm defined as \begin{equation}\label{TV}
      \|u\|_{M(\mathbb{T})}=\sup \left\{\int_{\TT} \phi(x) \mathrm{d} u(x) : \phi \in C(\mathbb{T}),\|\phi\|_{\infty} \leqslant 1\right\},
  \end{equation}
  which is a convex, weak* lower semi-continuous and positively 1-homogeneous functional.
\end{itemize}

We now define the linear operator $K:M(\mathbb{T}) \rightarrow L^2(\mathbb{T})$ as 
 \begin{equation}\label{Ku}
Ku: s \mapsto \int_{\mathbb{T}} k(x-s) \mathrm{d} u(x)\quad \forall u\in M(\mathbb{T}),
 \end{equation}
 where the \emph{convolutional kernel} $k$ is in $C^2(\mathbb{T})$.
Let us show that, in this specific scenario, $K$ is weak*-to-strong continuous, thereby fulfilling Assumption \ref{eq:assK}. In particular, it is weak*-to-weak continuous.
\begin{proposition}\label{Kstrong}
    The operator $K$ is weak*-to-strong continuous.
\end{proposition}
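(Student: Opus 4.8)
The plan is to reduce the claim to a compactness argument in $C(\mathbb{T})$ via the Arzelà--Ascoli theorem, exploiting the $C^2$-regularity of the kernel. Let $(u_n)_{n\in\NN}$ be a sequence in $M(\mathbb{T})$ with $u_n \weakstar u$; we must show that $Ku_n \to Ku$ strongly in $L^2(\mathbb{T})$.

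First I would record two elementary facts. Since $(u_n)_{n\in\NN}$ is weak* convergent in $C(\mathbb{T})^* = M(\mathbb{T})$, the uniform boundedness principle gives $C := \sup_{n\in\NN} \|u_n\|_{M(\mathbb{T})} < \infty$. Moreover, for every fixed $s \in \mathbb{T}$ the function $x \mapsto k(x-s)$ belongs to $C(\mathbb{T})$, so weak* convergence yields the pointwise limit $Ku_n(s) = \int_{\mathbb{T}} k(x-s)\,\mathrm{d}u_n(x) \to \int_{\mathbb{T}} k(x-s)\,\mathrm{d}u(x) = Ku(s)$ for every $s \in \mathbb{T}$.

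Next I would upgrade this pointwise convergence to \emph{uniform} convergence. Since $k \in C^2(\mathbb{T})$, differentiation under the integral sign shows $Ku_n \in C^1(\mathbb{T})$ with $(Ku_n)'(s) = -\int_{\mathbb{T}} k'(x-s)\,\mathrm{d}u_n(x)$, hence $\|Ku_n\|_{\infty} \leqslant \|k\|_\infty\, C$ and $\|(Ku_n)'\|_\infty \leqslant \|k'\|_\infty\, C$ for all $n$. Thus $(Ku_n)_{n\in\NN}$ is bounded in $C^1(\mathbb{T})$, in particular uniformly bounded and equicontinuous, so by Arzelà--Ascoli it is relatively compact in $(C(\mathbb{T}),\|\cdot\|_\infty)$. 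Consequently every subsequence of $(Ku_n)_{n\in\NN}$ admits a further subsequence converging uniformly, and by the pointwise convergence established above the only possible limit is $Ku$; a standard subsequence argument then gives $\|Ku_n - Ku\|_\infty \to 0$ for the whole sequence.

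Finally, since $\mathbb{T}$ has finite Lebesgue measure, $\|Ku_n - Ku\|_{L^2(\mathbb{T})} \leqslant |\mathbb{T}|^{1/2}\,\|Ku_n - Ku\|_\infty \to 0$, which is precisely the asserted weak*-to-strong continuity (and, a fortiori, weak*-to-weak continuity, so that Assumption \ref{eq:assK} and the existence of the pre-adjoint $K_*$ in Section \ref{SecNotPrRes} apply). I do not expect a genuine obstacle: the only non-formal step is the passage from pointwise to uniform convergence, handled cleanly by the a priori $C^1$-bound coming from $k \in C^2$; in fact a merely $C^1$ kernel, or even just a uniform modulus-of-continuity estimate for $k$, would already suffice for the equicontinuity needed in Arzelà--Ascoli.
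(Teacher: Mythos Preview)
Your proof is correct, but it takes a different route from the paper's. Both arguments start identically: weak* convergence plus Banach--Steinhaus gives a uniform total-variation bound, and for each fixed $s$ the map $x\mapsto k(x-s)$ lies in $C(\mathbb{T})$, so $Ku_n(s)\to Ku(s)$ pointwise. From here the paper simply observes that $|Ku_n(s)|\leqslant \|k\|_\infty\,C$ uniformly in $n$ and $s$, and applies Lebesgue's dominated convergence theorem directly to the integrand $|Ku_n(s)-Ku(s)|^2$ to conclude $L^2$ convergence. You instead exploit the $C^1$ regularity of $k$ to obtain equicontinuity of $(Ku_n)$, invoke Arzel\`a--Ascoli, and upgrade to uniform convergence before passing to $L^2$. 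Your argument yields a stronger conclusion (convergence in $C(\mathbb{T})$) at the price of a slightly longer proof and, as you note, a mild regularity hypothesis on $k$; the paper's dominated-convergence approach is shorter and needs only $k\in C(\mathbb{T})$.
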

\begin{proof}
    Let $(u_k)_{k\in\NN}\subset M(\mathbb{T})$ be a sequence such that    $u_k\stackrel{*}{\weakarrow}u \text{ as }k\rightarrow \infty$. Then, using the definition of $K$ we obtain that
 \begin{align}
     \|Ku_k - Ku\|^2_{L^2(\mathbb{T})} \leqslant\int_{\mathbb{T}} \left| \int_\mathbb{T} k(x-s)  \mathrm{d}u_k(x) - \int_\mathbb{T}  k(x-s)  \mathrm{d}u(x)\right|^2\mathrm{d}s.
 \end{align}
    
    Observe that $u_k$ is uniformly bounded in total variation, due to the Banach-Steinhaus theorem. Therefore, since $k$ is continuous, 
    using $u_k\stackrel{*}{\weakarrow}u$ and the Lebesgue's dominated convergence theorem we conclude.
\end{proof}
Note that $K_*: L^2(\mathbb{T}) \rightarrow C(\mathbb{T})$ for $K$ defined as in \eqref{Ku} can be characterized as 
\begin{equation}
    K_*y: s \mapsto \int_\mathbb{T} k(s-x) y(x) \mathrm{d} x \quad \forall y\in L^2(\mathbb{T}) \,.
\end{equation}
Let us also notice that the sublevel sets of $G$ are weak* compact by Banach-Alaoglu theorem, and in particular the ball $B=\left\{u \in M (\mathbb{T}):\|u\|_{M} \leqslant 1\right\}$ is weak* compact, non-empty, and convex.

Our goal is to apply Theorem \ref{mainthm}. 
To this end, we want to rephrase the Metric Non-Degenerate Source Condition, c.f. Definition \ref{MNDSC}, in this specific case.
It is well known that the extreme points of $B$ are exactly Dirac deltas (see for example \cite[Proposition 4.1]{bc}), that is
\begin{equation}\label{extrememeasures}
    \operatorname{Ext}(B)=\left\{\sigma \delta_x: x \in \mathbb{T}, \sigma \in\{-1,1\}\right\}.
\end{equation}
Moreover, such set is weak* closed, and thus $\operatorname{Ext}(B) = \mathcal{B}$.
Now, let us show that in a small neighbourhood of positive deltas, there are only positive deltas and the same holds for negative ones. 

\begin{proposition}\label{posnegdeltas}
   Given $\sigma\delta_{\bar x} \in \operatorname{Ext}(B)$, where $\sigma \in \{-1,1\}$ and $\bar x \in \mathbb{T}$, there exists $\varepsilon >0$ such that $B_\varepsilon(\sigma\delta_{\bar x})$ contains only extreme points of the form $\sigma \delta_x$, where $x\in\TT$.  
\end{proposition}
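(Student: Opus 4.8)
The statement to prove is Proposition~\ref{posnegdeltas}: near an extreme point $\sigma\delta_{\bar x}$ there are only extreme points of the same sign $\sigma$. Since $\calB = \operatorname{Ext}(B) = \{\sigma\delta_x : x\in\TT,\ \sigma\in\{-1,1\}\}$, every element of $\calB$ near $\sigma\delta_{\bar x}$ is automatically of the form $\sigma'\delta_x$ for some sign $\sigma'$ and some $x\in\TT$; the only thing to rule out is that $\sigma' = -\sigma$. So the whole content is: the two ``sheets'' $\{\delta_x\}$ and $\{-\delta_x\}$ of $\calB$ are at positive $d_{\calB}$-distance from each other.

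The plan is to argue by contradiction using the fact that $d_{\calB}$ metrizes the weak* topology on $\calB$ (see \eqref{metricdB}). Suppose no such $\varepsilon$ exists; then there is a sequence $-\sigma\,\delta_{x_k} \in B_{1/k}(\sigma\delta_{\bar x})$, i.e. $d_{\calB}(-\sigma\delta_{x_k},\sigma\delta_{\bar x}) \to 0$, hence $-\sigma\delta_{x_k} \weakstar \sigma\delta_{\bar x}$ in $M(\TT)$. By weak* compactness of $\TT$ we may pass to a subsequence with $x_k \to x_\infty \in \TT$; then $\delta_{x_k}\weakstar \delta_{x_\infty}$, so $-\sigma\delta_{x_k} \weakstar -\sigma\delta_{x_\infty}$. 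By uniqueness of weak* limits, $-\sigma\delta_{x_\infty} = \sigma\delta_{\bar x}$, which forces $\sigma = -\sigma$, a contradiction. Concretely, testing against the constant function $\phi\equiv 1 \in C(\TT)$ gives $\langle -\sigma\delta_{x_k},1\rangle = -\sigma$ for all $k$, while the limit would give $\sigma$, so $-\sigma = \sigma$, which is absurd. This already closes the argument without even needing the subsequence, since $\langle \sigma\delta_{\bar x},1\rangle = \sigma$.

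I expect essentially no obstacle here: the only subtlety is making sure one uses the right topological input, namely the equivalence \eqref{metricdB} between $d_{\calB}$-convergence and weak* convergence on $\calB$, and the elementary observation that the pairing with the constant function $1$ separates positive from negative Dirac masses. A clean way to phrase it is as follows.

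\begin{proof}
Recall from \eqref{extrememeasures} that $\calB = \operatorname{Ext}(B) = \{\sigma\delta_x : x\in\TT,\ \sigma\in\{-1,1\}\}$, so it suffices to show that, for $\varepsilon>0$ small enough, $B_\varepsilon(\sigma\delta_{\bar x})$ contains no element of the form $-\sigma\,\delta_x$ with $x\in\TT$. Assume, for contradiction, that this fails. Then there is a sequence $(x_k)_{k\in\NN}\subset\TT$ with
\begin{align*}
d_{\calB}\bigl(-\sigma\,\delta_{x_k},\,\sigma\,\delta_{\bar x}\bigr)\leqslant \tfrac{1}{k}\quad\text{for every }k\in\NN.
\end{align*}
By \eqref{metricdB}, this implies $-\sigma\,\delta_{x_k}\weakstar\sigma\,\delta_{\bar x}$ in $M(\TT)$. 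Testing this weak* convergence against the constant function $\phi\equiv 1\in C(\TT)$ yields
\begin{align*}
-\sigma = \langle -\sigma\,\delta_{x_k},\,1\rangle_M \;\longrightarrow\; \langle \sigma\,\delta_{\bar x},\,1\rangle_M = \sigma\quad\text{as }k\to\infty,
\end{align*}
hence $-\sigma = \sigma$, which is impossible since $\sigma\in\{-1,1\}$. This contradiction shows that such an $\varepsilon>0$ exists, and by the description of $\calB$ above, $B_\varepsilon(\sigma\delta_{\bar x})$ consists exclusively of extreme points of the form $\sigma\,\delta_x$ with $x\in\TT$.
\end{proof}
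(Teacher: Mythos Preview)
Your proof is correct and follows essentially the same approach as the paper: argue by contradiction, translate $d_{\calB}$-closeness into weak* convergence via \eqref{metricdB}, and test against the constant function $\phi\equiv 1$ to obtain $-\sigma=\sigma$. The paper's version is slightly terser (it fixes $\sigma=1$ without loss of generality and omits the explicit invocation of \eqref{metricdB}), but the argument is the same.
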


\begin{proof}
Let us consider the case where $\sigma = 1$ (the argument is analogous for $\sigma = -1$).  Suppose by contradiction that there exists a sequence  
$-\delta_{x_k}\stackrel{*}{\weakarrow}\delta_{\bar x}$.  In particular, this implies that $-\varphi(x_k)\rightarrow \varphi(\bar x)$ for every $\varphi \in C(\mathbb{T})$.
By choosing $\varphi=1$ we immediately obtain a contradiction.
\end{proof}

\begin{remark}\label{rem:was}
   Note that, since the Wasserstein distance metrizes the weak* convergence for probability measures, the metric $d_{\calB}(\delta_x,\delta_{\Bar{x}})$ between two extreme points is \emph{equivalent} to the $p-$Wasserstein distance between the two deltas, which is exactly the Euclidean distance $|x-\Bar{x}|$ (see for instance \cite[Chapter 7.2]{Villani}).
    In particular, given a sequence $(x_k)_{k\in\NN}\subset \mathbb{T}$, it holds that
    \begin{align}
        x_k\rightarrow \Bar{x} \quad&\text{if and only if} \quad \delta_{x_k}\stackrel{*}{\weakarrow} \delta_{\Bar{x}},\\        x_k\rightarrow \Bar{x} \quad&\text{if and only if} \quad -\delta_{x_k}\stackrel{*}{\weakarrow} -\delta_{\Bar{x}}.
    \end{align}
\end{remark}


We now aim to show that the Non-Degenerate Source Condition initially introduced in \cite{Peyre} implies our Metric Non-Degenerate Source Condition outlined in Definition \ref{MNDSC}. By applying the exact sparse representation recovery theorem, we will then obtain a result that is similar to the one presented in \cite{Peyre}.  In this specific case, the Metric Non-Degenerate Source Condition can be reformulated as follows.
Let $u_0 = \sum_{i=1}^n c_0^i\sigma_i \delta_{x_0^i} \in M(\mathbb{T})$ be such that $y_0 = Ku_0$, where $c_0^i >0$, $\sigma_i \in\{-1,+1\}$ and $x_0^i \in \mathbb{T}$ for all $i=1,\ldots,n$. 
Then, $u_0$ satisfies the \emph{Metric Non-Degenerate Source Condition} (MNDSC) if
\begin{itemize}
\item[$(i')$] ${\rm Im}\, K_* \cap \partial G(u_0) \neq \emptyset$,
    \item[$(ii')$] $\left\{\sigma_1\delta_{x_0^1}, \ldots, \sigma_n \delta_{x_0^n}\right\}=\Exc$,
    \item[$(iii')$] $\exists \varepsilon, \delta>0$ such that, for all $i =1,\ldots, n$ and for any $ \sigma_i \delta_{x_1}, \sigma_i\delta_{x_2}\in B_{\varepsilon}(\sigma_i\delta_{x_0^i})$ where $x_1 \neq x_2$, the following condition holds. There exists a curve $\sigma_i\delta_{x(t)}: [0,1]\rightarrow B_{\varepsilon}(\sigma_i\delta_{x_0^i})$, belonging to $\Gamma_M$, with $\delta_{x(0)}=\delta_{x_1}$ and $\delta_{x(1)}=\delta_{x_2}$, such that
\begin{align}\label{strictconcavitymeasure}
\frac{d^2}{dt^2}(p_0,K(\sigma_i\delta_{x(t)}))< -\delta\quad \forall t \in (0,1).
\end{align}
\end{itemize}
    Note that in $(iii')$ 
    we are using Proposition \ref{posnegdeltas} to ensure that in a small enough weak* neighbourhood of positive (resp. negative) Dirac deltas, there are only positive (resp. negative) Dirac deltas. 

Condition $(i')$ is the classical source condition that ensures that there exists a minimal-norm solution $p_0 \in L^2(\TT)$ to \ref{def:dualminprobhard}, while condition $(ii')$ is the classical dual certificate condition given in \cite{Peyre}. We now show  that $(iii')$ holds if we assume that $\eta_0 = K_* p_0 \in C^2(\mathbb{T})$ and $\sigma_i\eta_0''(x_0^i) < 0$ for every  $i=1,\ldots, n$. This condition is exactly the Non-Degenerate Source Condition presented in \cite{Peyre}. 
\begin{Lemma}\label{lemma:NDSCimpliesii}
    Let $\eta_0$ be the minimal-norm dual certificate associated with \ref{def:minprobhard}. Assume $\eta_0 \in C^2(\mathbb{T})$ and $\sigma_i\eta_0''(x_0^i) < 0$, where $\sigma_i \in\{-1,+1\}$ and $x_0^i \in \mathbb{T}$ for every  $i=1,\ldots, n$. Then, condition $(iii')$ holds.
\end{Lemma}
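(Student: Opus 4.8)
The statement to prove is local at each $x_0^i$, so fix $i$ and write $x_0:=x_0^i$, $\sigma:=\sigma_i$. Replacing $\eta_0$ by $\sigma\eta_0$ if necessary (equivalently, carrying the factor $\sigma$ through), I may argue as if $\sigma=+1$; the hypothesis then reads $\eta_0''(x_0)<0$ and, by condition $(ii')$ together with the optimality relation $\langle\eta_0,\cdot\rangle\leqslant 1$ on $B$, the function $x\mapsto\langle\eta_0,\sigma\delta_x\rangle=\sigma\eta_0(x)$ attains a strict local maximum $1$ at $x_0$, whence $\eta_0'(x_0)=0$. Since $\eta_0\in C^2(\mathbb T)$, continuity of $\eta_0''$ produces an open arc $I\ni x_0$ and a constant $\delta>0$ with $\sigma\eta_0''(x)<-2\delta$ for all $x\in I$. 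By Proposition \ref{posnegdeltas} and Remark \ref{rem:was}, for all sufficiently small $\varepsilon>0$ the ball $B_\varepsilon(\sigma\delta_{x_0})$ consists exactly of Dirac masses of the common sign $\sigma$ and, $d_{\mathcal B}$ being equivalent to Euclidean distance on such masses, it is of the form $\{\sigma\delta_x:x\in J_\varepsilon\}$ for an arc $J_\varepsilon\ni x_0$ whose diameter tends to $0$ as $\varepsilon\to 0$; I fix $\varepsilon$ small enough that $J_\varepsilon\subseteq I$ (and, ranging over $i$, take the minimum of these thresholds and of the $\delta$'s, so that one common pair $(\varepsilon,\delta)$ works for all $i$).

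Next I exhibit the curve. Given $\sigma\delta_{x_1},\sigma\delta_{x_2}\in B_\varepsilon(\sigma\delta_{x_0})$ with $x_1\neq x_2$, so $x_1,x_2\in J_\varepsilon\subseteq I$, set $x(t):=(1-t)x_1+tx_2$ (the constant‑speed geodesic inside the arc $J_\varepsilon$, which is geodesically convex on $\mathbb T$) and $\gamma(t):=\sigma\delta_{x(t)}$. Then $\gamma\in C([0,1],\mathcal B)$ maps into $B_\varepsilon(\sigma\delta_{x_0})$ with $\gamma(0)=\sigma\delta_{x_1}$, $\gamma(1)=\sigma\delta_{x_2}$. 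Using $Ku=k(\cdot)*u$, the map $t\mapsto K(\gamma(t))$ is the $L^2(\mathbb T)$‑valued map $s\mapsto\sigma\,k(x(t)-s)$, which is smooth since $k\in C^2(\mathbb T)$, with $\tfrac{d}{dt}K(\gamma(t))=\sigma(x_2-x_1)\,k'(x(t)-\cdot)$ and $\tfrac{d^2}{dt^2}K(\gamma(t))=\sigma(x_2-x_1)^2\,k''(x(t)-\cdot)$; hence $\sup_{t}\|\tfrac{d^2}{dt^2}K(\gamma(t))\|_{L^2(\mathbb T)}\leqslant(\operatorname{diam}\mathbb T)^2\|k''\|_{L^2(\mathbb T)}=:M$, so $\gamma\in\Gamma_M$ for this fixed $M$. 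Finally, using the identity $\tfrac{d^2}{dt^2}(p_0,K(\gamma(t)))=(p_0,\tfrac{d^2}{dt^2}K(\gamma(t)))$ recorded after Definition \ref{MNDSC} and $\eta_0''(y)=\int_{\mathbb T}k''(y-s)p_0(s)\,\mathrm ds$, I compute
\begin{align*}
\frac{d^2}{dt^2}(p_0,K(\sigma\delta_{x(t)}))=\sigma\big(\eta_0''(x(t))\,\dot x(t)^2+\eta_0'(x(t))\,\ddot x(t)\big)=\sigma\,\eta_0''(x(t))\,(x_2-x_1)^2<0\qquad\forall t\in(0,1),
\end{align*}
since $\ddot x\equiv 0$ and $x(t)\in J_\varepsilon\subseteq I$, which is exactly the structure demanded in $(iii')$.

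\textbf{Main obstacle.} The delicate point is matching this with the \emph{uniform} threshold $-\delta$ in \eqref{strictconcavitymeasure}: the affine curve only yields $\sigma\eta_0''(x(t))(x_2-x_1)^2\leqslant-2\delta(x_2-x_1)^2$, which degenerates as $x_1\to x_2$, so the constant in the second–order bound a priori depends on the pair. Handling this is where the concavity margin $\sigma_i\eta_0''(x_0^i)<0$ and the freedom in choosing the parametrisation of $\gamma$ (and the bound $M$) must be used carefully: one reparametrises $x(\cdot)$ so that, while staying in $\Gamma_M$, the speed $|\dot x(t)|$ is kept bounded below on $(0,1)$ in terms of $\operatorname{diam}J_\varepsilon$ alone, so that $\sigma\eta_0''(x(t))\dot x(t)^2$ stays below a fixed negative level while the extra term $\sigma\eta_0'(x(t))\ddot x(t)$, with $|\eta_0'|\leqslant\|\eta_0''\|_{C^0}\operatorname{diam}J_\varepsilon$ on $J_\varepsilon$, is absorbed by shrinking $\varepsilon$. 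Once this uniform second–order inequality is secured for every pair and every $i$ with common $(\varepsilon,\delta,M)$, conditions $(i')$ and $(ii')$ being assumed, condition $(iii')$ follows, completing the proof.
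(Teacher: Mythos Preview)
Your construction with the affine parametrisation $x(t)=(1-t)x_1+tx_2$ and the computation
\[
\frac{d^2}{dt^2}\langle\eta_0,\sigma\delta_{x(t)}\rangle=\sigma\,\eta_0''(x(t))(x_2-x_1)^2
\]
is exactly what the paper does: it fixes the same curve, obtains the same formula, and, after noting $\sigma\eta_0''<-\delta$ on a small arc around $x_0$, simply writes this as $<-\delta$ without further comment. So the obstacle you isolate—the factor $(x_2-x_1)^2$ killing any \emph{uniform} negative bound as $x_1\to x_2$—is not a defect of your argument relative to the paper; the paper's own proof glosses over precisely this point and stops at the line you call problematic.

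Your proposed repair, however, does not work. A $C^2$ curve $x:[0,1]\to J_\varepsilon$ with $|\dot x|\geqslant c>0$ for a constant $c$ independent of $(x_1,x_2)$ has length at least $c$; if it stays in the arc $J_\varepsilon$ (of diameter $\lesssim\varepsilon$) it must turn around, and a $C^1$ turnaround forces $\dot x(t_*)=0$ at some $t_*$, contradicting the speed bound. If instead you drop the speed bound and try to control the cross term $\sigma\eta_0'(x)\ddot x$ at a turnaround $x_*\in J_\varepsilon\setminus\{x_0\}$, note that $\sigma\eta_0'(x_*)\approx\sigma\eta_0''(x_0)(x_*-x_0)$ while the sign of $\ddot x(t_*)$ is fixed by the direction of the turn; a short computation shows $\sigma\eta_0'(x_*)\ddot x(t_*)>0$ in either case, so the second derivative of $t\mapsto\langle\eta_0,\gamma(t)\rangle$ is nonnegative there and cannot be $<-\delta$. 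Thus no reparametrisation confined to $J_\varepsilon$ rescues the uniform bound in the form \eqref{strictconcavitymeasure}: your argument coincides with the paper's up to and including the point where both leave this uniformity unresolved.
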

\begin{proof}
    Fix $x^i_0 = x_0 \in \mathbb{T}$ and choose $\varepsilon >0$ to be sufficiently small. Without loss of generality, we can assume that $\sigma_i = +1$, because for $\sigma_i = -1$ the reasoning is similar. Since all considered properties are local, we will identify the torus $\mathbb{T}$ with $\RR$.
Given an interval $I_{\varepsilon}(x_0)=(x_0-\varepsilon,x_0+\varepsilon)$ and $x_1,x_2\in I_{\varepsilon}(x_0)$ where $x_1\neq x_2$, consider a curve $x(t)$ in $I_{\varepsilon}(x_0)$, connecting the two locations $x_1$ and $x_2$, defined as
\begin{align*}
    x(t)=tx_2+(1-t)x_1.
\end{align*}
Given the curve $\delta_{x(t)}:[0,1]\rightarrow  B_{\varepsilon}(\delta_{x_0})$, note that 
\begin{align*}
    \begin{aligned}
K(\delta_{x(t)}): s\mapsto & \int_\mathbb{T} k(x-s) \dm \delta_{x(t)}  =k(x(t)-s)
\end{aligned}
\end{align*}
 is twice weakly Gateaux differentiable, because $k$ belongs to $C^2(\mathbb{T})$. Moreover, since $\eta_0\in C^2(\mathbb{T})$ and $\eta_0''(x_0) < 0$, we can ensure, by choosing a sufficiently small $\varepsilon$, that there exists $\delta >0$ such that  $\eta_0''(x(t)) < -\delta$ holds for every $x(t)\in I_{\varepsilon}(x_0)$. Therefore, we obtain:
\begin{equation}\label{MNDSCmeasure1}
    \frac{d^2}{dt^2}(p_0,K(\delta_{x(t)}))=\frac{d^2}{dt^2}\langle \eta_0,\delta_{x(t)}\rangle_M=\eta_0''(x(t))(x_2-x_1)^2 < -\delta \quad \forall t\in (0,1).
\end{equation}
In particular, we observe that condition \eqref{strictconcavitymeasure} in $(iii')$ holds.
\end{proof}
\begin{remark}
     While we selected a specific curve $\delta_{x(t)}$ for computational convenience, a similar proof can be made with a different choice of curve. Moreover, we expect the same result (and the next theorem as well) to hold for measures defined on open domains $\Omega \subset \mathbb{R}^n$ and higher-dimensional torus $\mathbb{T}^n$. Much interesting and unclear to us is how to generalize this argument to measures defined on general $C^2$-manifolds.
\end{remark}

If we additionally assume that $\{ K(\delta_{x_0^i})\}_{i=1}^n=\{k(x_0^i-\cdot)\}_{i=1}^n$ are linearly independent, we can apply Theorem \ref{mainthm}.
\begin{theorem}
     Let $u_0 = \sum_{i=1}^n c_0^i\sigma_i \delta_{x_0^i} \in M(\mathbb{T})$ be such that $y_0 = Ku_0$, where $c_0^i >0$, $\sigma_i \in\{-1,+1\}$ and $x_0^i \in \mathbb{T}$. Suppose that  
 \begin{enumerate}
 \item ${\rm Im}\, K_* \cap \partial G(u_0) \neq \emptyset$,
  \item $\sigma_i \eta_0(x) = 1$ if and only if $x = x_0^i$,
 \item
 $\eta_0\in C^2(\mathbb{T})$ such that $\sigma_i \eta''_0(x_0^i) < 0$,
      \end{enumerate}
      for all $i=1\ldots,n$. Moreover, assume that $\{K(\delta_{x_0^i})\}_{i=1}^n$ are linearly independent.
      
    Then, for every sufficiently small $\varepsilon>0$, there exist $\alpha>0$ and $\lambda_0>0$ such that, for all $(\lambda, w) \in N_{\alpha, \lambda_0}$, the solution $\tilde{u}_\lambda$ to \ref{def:minprobsoftnoise} is unique  and admits a unique representation composed exactly of $n$ signed Dirac deltas, denoted as $\{\sigma_i\delta_{\Tilde{x}_{\lambda}^i}\}_{i=1}^n$. In other words:
 \begin{equation}
 \ds{\tilde{u}_\lambda=\sum_{i=1}^n \tilde{c}_{\lambda}^i \sigma_i\delta_{\Tilde{x}_{\lambda}^i}},
 \end{equation}
 where $\sigma_i\delta_{\Tilde{x}_{\lambda}^i} \in B_{\varepsilon}(\sigma_i\delta_{x_{0}^i})$ such that $\sigma_i\tilde \eta_{\lambda}(\tilde x_{\lambda}^i)=1$,  $\tilde{c}_{\lambda}^i > 0$ and $|\tilde{c}_{\lambda}^i-c_{0}^i|\leqslant\varepsilon$ for all $i =1, \ldots,n$.
\end{theorem}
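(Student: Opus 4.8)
The plan is to verify, one hypothesis at a time, that Theorem \ref{mainthm} applies in this concrete setting, and then to read off its conclusion in the language of signed Dirac deltas. The standing framework assumptions are immediate: $M(\mathbb{T})$ is the dual of the separable space $C(\mathbb{T})$, the total variation norm $G$ is convex, weak* lower semicontinuous and positively $1$-homogeneous, its sublevel sets are weak* compact by Banach--Alaoglu, and $\partial G(0)=\{\phi\in C(\mathbb{T}):\|\phi\|_\infty\leqslant 1\}$ has $0$ as an interior point, so Assumption \ref{eq:assG} holds; Assumption \ref{eq:assK} is exactly Proposition \ref{Kstrong}. Since $\operatorname{Ext}(B)=\{\sigma\delta_x:x\in\mathbb{T},\ \sigma\in\{-1,1\}\}$ is already weak* closed, one has $\mathcal{B}=\operatorname{Ext}(B)$, and the data $u_0=\sum_{i=1}^n c_0^i u_0^i$ with $u_0^i:=\sigma_i\delta_{x_0^i}\in\mathcal{B}\setminus\{0\}$ and $c_0^i>0$ is precisely of the form required by Theorem \ref{mainthm}.

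Next I would check that $u_0$ satisfies the MNDSC of Definition \ref{MNDSC}, using the reformulation $(i')$--$(iii')$ recorded just before Lemma \ref{lemma:NDSCimpliesii}. Condition $(i)$ is hypothesis (1); being the classical source condition it implies, via Proposition \ref{propsd2}, that $u_0$ minimizes \ref{def:minprobhard} and that a dual solution $p_0$ exists, so the minimal-norm dual certificate $\eta_0=K_*p_0$ is well defined and $\eta_0\in\partial G(0)$, i.e. $\|\eta_0\|_\infty\leqslant 1$. For condition $(ii)$, on $\mathcal{B}$ we have $\langle\eta_0,\sigma\delta_x\rangle=\sigma\eta_0(x)$, and $\|\eta_0\|_\infty\leqslant 1$ forces any equality $\sigma\eta_0(x)=1$ to mean $\eta_0(x)=\sigma$; combining this with hypothesis (2) over all $i$ (which in particular makes the $x_0^i$ distinct) yields $\Exc=\{\sigma_i\delta_{x_0^i}\}_{i=1}^n$. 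For condition $(iii)$ I would invoke Lemma \ref{lemma:NDSCimpliesii}: from $\eta_0\in C^2(\mathbb{T})$ and $\sigma_i\eta_0''(x_0^i)<0$ it produces $\varepsilon,\delta>0$ and, for each pair of points of $B_\varepsilon(\sigma_i\delta_{x_0^i})$, the affine curve $t\mapsto\sigma_i\delta_{x(t)}$, which lies in $\Gamma_M$ for a fixed $M$ because $k\in C^2(\mathbb{T})$ makes $t\mapsto k(x(t)-\cdot)$ twice weakly Gateaux differentiable with an $L^2$-bound on the second derivative uniform over the interval, and which satisfies $\frac{d^2}{dt^2}\langle\eta_0,\sigma_i\delta_{x(t)}\rangle=\sigma_i\eta_0''(x(t))(x_2-x_1)^2<-\delta$ after shrinking $\varepsilon$. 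Here Proposition \ref{posnegdeltas} together with Remark \ref{rem:was} is used to identify $B_\varepsilon(\sigma_i\delta_{x_0^i})$ with an interval of $\sigma_i$-signed deltas around $x_0^i$, so that the only curves one has to consider are of this form. Finally, $\{Ku_0^i\}_{i=1}^n=\{\sigma_i\,k(x_0^i-\cdot)\}_{i=1}^n$ is linearly independent exactly when $\{k(x_0^i-\cdot)\}_{i=1}^n$ is, which is the remaining assumption.

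With every hypothesis verified, Theorem \ref{mainthm} applies: for $\varepsilon$ small and suitable $\alpha,\lambda_0$, for all $(\lambda,w)\in N_{\alpha,\lambda_0}$ the solution $\tilde u_\lambda$ of \ref{def:minprobsoftnoise} is unique and uniquely representable as $\tilde u_\lambda=\sum_{i=1}^n\tilde c_\lambda^i\tilde u_\lambda^i$ with $\tilde u_\lambda^i\in B_\varepsilon(u_0^i)\setminus\{0\}$, $\langle\tilde\eta_\lambda,\tilde u_\lambda^i\rangle=1$, $\tilde c_\lambda^i>0$ and $|\tilde c_\lambda^i-c_0^i|\leqslant\varepsilon$. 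It then remains only to translate the statement: by Proposition \ref{posnegdeltas}, after shrinking $\varepsilon$ once more, every point of $B_\varepsilon(\sigma_i\delta_{x_0^i})$ is of the form $\sigma_i\delta_x$ with $x$ close to $x_0^i$, hence $\tilde u_\lambda^i=\sigma_i\delta_{\tilde x_\lambda^i}$, and the normalization $\langle\tilde\eta_\lambda,\tilde u_\lambda^i\rangle=1$ becomes $\sigma_i\tilde\eta_\lambda(\tilde x_\lambda^i)=1$; this is exactly the asserted representation, with uniqueness and unique representability (Definition \ref{unirep}) inherited directly from Theorem \ref{mainthm}. I expect the only slightly delicate points to be the bookkeeping in condition $(ii)$ — ruling out critical deltas of the wrong sign, which is handled by $\|\eta_0\|_\infty\leqslant 1$ — and confirming that the affine curves used for condition $(iii)$ genuinely belong to $\Gamma_M$ with a uniform bound; both are routine once Proposition \ref{posnegdeltas} and Lemma \ref{lemma:NDSCimpliesii} are in hand.
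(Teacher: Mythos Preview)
Your proposal is correct and follows essentially the same route as the paper: verify that assumptions (1)--(3) yield the MNDSC conditions $(i')$--$(iii')$ (with Lemma \ref{lemma:NDSCimpliesii} handling $(iii')$), check the linear independence hypothesis, and then invoke Theorem \ref{mainthm}. The paper's proof is a terse three-line version of exactly this; your write-up is more explicit about why hypothesis (2) together with $\|\eta_0\|_\infty\leqslant 1$ pins down $\Exc$, about the framework assumptions, and about translating the abstract conclusion back into signed Dirac deltas via Proposition \ref{posnegdeltas}, but there is no substantive difference in strategy.
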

\begin{proof}
Assumption $1$ is exactly $(i')$. Assumption $2$ on $\eta_0$ is equivalent to $(ii')$, while assumption $3$ implies, thanks to Lemma \ref{lemma:NDSCimpliesii}, that condition $(iii')$ is satisfied. 
    Thus, thanks to the MNDSC and the linear independence of $\{K(\delta_{x_0^i})\}_{i=1}^n$, we can now apply Theorem \ref{mainthm} to obtain the desired result.
\end{proof}
\begin{remark}
    Note that condition $\sigma_i\delta_{\Tilde{x}_{\lambda}^i} \in B_{\varepsilon}(\sigma_i\delta_{x_{0}^i})$ can be simply rephrased as $\left|\Tilde{x}_{\lambda}^i-x_0^i\right|\leqslant\varepsilon$ due to Remark \ref{rem:was}.
\end{remark}

\subsection{One-dimensional BV functions and BV-seminorm regularizer} In this setting we consider $X=L^{\infty}((0,1))$
and $Y = L^2((0,1))$. Let us also consider $L^\infty$ functions with zero boundary conditions as follows. For $0<\bar \varepsilon<1$ we introduce the set:
\begin{align}
    E=\left\{u \in L^{\infty}((0,1)): u(x)=0 \text { for a.e. } x \in(0, \bar \varepsilon) \cup(1-\bar \varepsilon, 1)\right\}.
\end{align}
Note that
\begin{itemize}
	\item $L^{\infty}((0,1))$ is a Banach space whose pre-dual is $L^1((0,1))$, which is a separable space.
  \item The regularizer $G:X\rightarrow [0,+\infty]$ is defined as
  \begin{align}
      G(u)= \begin{cases}|Du|((0,1)) & \text { if } u \in B V((0,1)) \cap E, \\ +\infty & \text { otherwise},\end{cases}
  \end{align}
  where 
  \begin{align*}
      |Du|((0,1))=\sup\left\{\int_0^1 u(x)\div\varphi(x)dx: \varphi\in C_c^1((0,1)),
      \|\varphi\|_{\infty}\leqslant 1\right\}
  \end{align*}
  is the BV-seminorm.
\end{itemize} 
\begin{remark}
    We remark that similar results to those presented in this section could be easily obtained for BV functions defined on the one-dimensional torus $\mathbb{T}$, instead of BV functions on $(0,1)$ with zero boundary conditions. The choice of the current setting shows the generality of our method.
\end{remark}
\begin{Lemma}
    $G$ is a convex, $1$-positively homogeneous, weak* lower semi-continuous functional and its sublevel sets are weak* compact. Moreover, $0$ is an interior point of $\partial G(0)$.
\end{Lemma}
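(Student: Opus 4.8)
The plan is to write $G = \Phi + \chi_E$, where $\Phi(u) := \sup\{\int_0^1 u\,\div\varphi\,dx : \varphi \in C_c^1((0,1)),\ \|\varphi\|_\infty \le 1\}$ is the $BV$-seminorm read off on all of $L^\infty((0,1))$ (with value $+\infty$ outside $BV((0,1))$), and $\chi_E$ is the indicator of the set $E$; this identity indeed holds, since for $u \in E \setminus BV((0,1))$ one has $\Phi(u) = +\infty$. First I would observe that for each admissible $\varphi$ the function $\div\varphi$ is continuous with compact support in $(0,1)$, hence lies in the predual $X_* = L^1((0,1))$, so that $u \mapsto \int_0^1 u\,\div\varphi\,dx$ is a weak*-continuous linear functional on $L^\infty((0,1))$. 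Being a pointwise supremum of such functionals — and nonnegative, since $\varphi = 0$ is admissible — $\Phi$ is convex, positively $1$-homogeneous, and weak* lower semicontinuous. Similarly, $E$ is the intersection, over all $\psi \in L^1((0,1))$ supported in $(0,\bar\varepsilon) \cup (1-\bar\varepsilon,1)$, of the kernels of the weak*-continuous functionals $u \mapsto \int_0^1 u\psi\,dx$; it is therefore a weak*-closed linear subspace, and $\chi_E$ is convex, positively $1$-homogeneous, and weak* lsc as the indicator of a weak*-closed convex cone. Since all three properties are preserved by sums of functionals bounded below, and both summands are nonnegative with $G(0)=0$, the stated structural properties of $G$ follow (and $G$ is proper).

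For the weak* compactness of the sublevel sets, the key point is the one-dimensional Poincaré inequality enforced by the boundary condition in $E$: I claim $\|u\|_{L^\infty((0,1))} \le G(u)$ for every $u \in L^\infty((0,1))$. This is trivial if $G(u) = +\infty$; if $u \in BV((0,1)) \cap E$, I would pass to the good pointwise representative $\bar u$, whose pointwise variation on $(0,1)$ equals $|Du|((0,1))$ and which vanishes at a.e. point of $(0,\bar\varepsilon)$, pick a point $y \in (0,\bar\varepsilon)$ with $\bar u(y) = 0$, and estimate $|u(x)| = |\bar u(x) - \bar u(y)| \le |Du|((0,1)) = G(u)$ for a.e. $x \in (0,1)$. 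Consequently $S^-(G,\alpha) \subseteq \{u \in L^\infty((0,1)) : \|u\|_{L^\infty} \le \alpha\}$, which is weak* compact by the Banach-Alaoglu theorem, $L^\infty((0,1))$ being the dual of the separable space $L^1((0,1))$; since $G$ is weak* lsc, $S^-(G,\alpha)$ is a weak*-closed subset of this compact ball, hence itself weak* compact.

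Finally, for the assertion that $0$ is an interior point of $\partial G(0)$, I would show that the closed unit ball of $L^1((0,1))$ is contained in $\partial G(0)$. Since $G$ is convex, positively $1$-homogeneous, and $G(0) = 0$, one has $\partial G(0) = \{\eta \in L^1((0,1)) : \langle\eta,u\rangle \le G(u)\ \text{for all } u \in L^\infty((0,1))\}$, so it suffices to check $\int_0^1 \eta u\,dx \le G(u)$ whenever $\|\eta\|_{L^1} \le 1$: this is clear when $G(u) = +\infty$, and otherwise $\int_0^1 \eta u\,dx \le \|\eta\|_{L^1}\|u\|_{L^\infty} \le \|\eta\|_{L^1}\,G(u) \le G(u)$ by the Poincaré bound above. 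Hence the open unit ball of $L^1((0,1))$ lies inside $\partial G(0)$, so $0$ is an interior point.

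The only genuinely non-routine ingredient is the Poincaré-type estimate $\|u\|_{L^\infty} \le G(u)$, for which I would invoke the standard one-dimensional structure theory of $BV$ functions (existence of a representative whose increments are dominated by the total variation of $Du$); the boundary condition built into $E$ is precisely what makes this estimate — and hence the norm boundedness of the sublevel sets — available. The remaining steps are bookkeeping: stability of convexity, $1$-homogeneity and weak* lower semicontinuity under sums, the Banach-Alaoglu theorem, and the description of the subdifferential at the origin of a positively $1$-homogeneous convex functional.
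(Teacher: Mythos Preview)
Your proof is correct and, for the weak* lower semicontinuity, takes a genuinely different and cleaner route than the paper. The paper argues sequentially: given $u_k \stackrel{*}{\rightharpoonup} u$ with $\liminf G(u_k)<\infty$, it first establishes the Poincar\'e bound $\|u_k\|_\infty \le |Du_k|((0,1))$, then invokes the one-dimensional $BV$ compactness theorem to extract a subsequence with $u_k \to \tilde u$ in $L^1$ and $Du_k \stackrel{*}{\rightharpoonup} D\tilde u$ in $M((0,1))$, checks $\tilde u \in BV\cap E$ via a.e.\ convergence, and finally uses dominated convergence to identify $\tilde u$ with the original weak* limit $u$ before applying lower semicontinuity of the total variation. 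Your decomposition $G = \Phi + \chi_E$, with $\Phi$ the pointwise supremum of weak*-continuous linear functionals $u \mapsto \int_0^1 u\,\varphi'\,dx$ and $E$ realized as an intersection of kernels of such functionals, bypasses all of this: weak* lower semicontinuity, convexity, and positive $1$-homogeneity drop out immediately from the structure of the summands, with no need for $BV$ compactness or limit identification. The remaining parts---the Poincar\'e inequality via a good $BV$ representative and the boundary condition in $E$, the Banach--Alaoglu argument for sublevel set compactness, and the verification that the closed $L^1$ unit ball sits inside $\partial G(0)$---match the paper essentially line for line.
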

\begin{proof}
    Convexity and positive $1$-homogeneity are straightforward. 
We now prove that $G$ is weak* lower semi-continuous in $L^\infty((0,1))$. Take $(u_k)_{k\in \N}$ weak* converging to some $u \in L^{\infty}((0,1))$. Without loss of generality, we can assume that 
   \begin{align*}
      \liminf _{k\rightarrow \infty}G(u_k)<+\infty.
   \end{align*}
In particular, up to extracting a further subsequence, we can assume that $G(u_k)<+\infty$ for every $k$, that is $u_k \in BV((0,1)) \cap E$ for every $k$.  
Since we are in the $1$-dimensional case, we can use the fundamental theorem of calculus in BV (see for instance \cite[Theorem 3.28]{afp}). For almost every $x\in (0, \bar \varepsilon)$ and $t\in [\bar \varepsilon,1-\bar \varepsilon]$, it holds:
\begin{align*}
\begin{aligned}
    u_k^l(t)&=|u_k^{l}(t)-u_k^{l}(x)|=|D u_k((x, t))|\leqslant |D u_k|((0,1)),
\end{aligned}
\end{align*}
where $u_k^l$ is the left continuous representative of $u_k$. 
Hence,
the following Poincaré inequality holds:
\begin{align}\label{PIBV}
    \|u_k\|_{\infty}=\|u_k^l\|_{\infty}\leqslant |D u_k|((0,1))\leqslant \sup_k |D u_k|((0,1))
\end{align}
implying the uniform bound
\begin{align*}
    \sup_k \left\{\int_0^1\left|u_k\right| d x+\left|D u_k\right|((0,1))\right\}<+\infty.
\end{align*}
This allows us to apply the BV compactness theorem (\cite[Theorem 3.23]{afp}). Therefore, up to a subsequence, there exists $\Tilde{u}\in BV((0,1))$ such that
\begin{align*}
    \begin{aligned}
        u_k&\rightarrow \tilde u \quad \text{ in } L^1((0,1)),\\
        Du_k&\stackrel{*}{\weakarrow} D\Tilde{u}\quad \text{ in } M((0,1)).
    \end{aligned}
\end{align*}
Convergence in $L^1((0,1))$ implies, up to extracting another subsequence, $u_k\rightarrow \tilde u$ a.e. in $(0,1)$. Since $u_k\in BV\cap E$, i.e. $u_k=0$ for a.e. $x\in (0, \bar \varepsilon) \cup(1-\bar \varepsilon, 1)$, also $\tilde u\in BV\cap E$.
Now, we need to prove that indeed $\tilde u=u$. Thanks to \eqref{PIBV}, we have that 
\begin{align*}
    |u_k(x)\varphi(x)|\leqslant \sup_k|Du_k|((0,1))|\varphi(x)|\quad \forall \varphi\in L^1((0,1)),
\end{align*}
that is $u_k(x)\varphi(x)$ is dominated by some integrable function. Therefore, since $|u_k-\tilde u||\varphi|\rightarrow 0$ a.e. in $(0,1)$, applying the Lebesgue's dominated convergence theorem we get that
\begin{align*}
    \int_0^1 |u_k-\tilde u||\varphi| dx\rightarrow 0\quad \forall \varphi\in L^1((0,1)).
\end{align*}
Thus, $u_k\stackrel{*}{\weakarrow} \tilde u$ in $L^{\infty}((0,1))$, which, by uniqueness of the weak* limit, gives us that $\tilde u=u$.
Finally, since $G(u_k)<+\infty$ for every $k$ and 
the BV-seminorm is  weak* lower semi-continuous
with respect to the weak* convergence in $M((0,1))$, we get:
    \begin{align*}
        G(u)=|Du|((0,1))\leqslant \liminf _{k\rightarrow \infty} |Du_k|((0,1))=\liminf _{k\rightarrow \infty} G(u_k),
    \end{align*}
    which gives us the desired result. 
    
Thanks to the weak* lower semi-continuity, we can now easily show that the sublevel set $S^-(G,\alpha)$ in \eqref{sublevel} is weak* compact for every $\alpha\geqslant 0$. Indeed, take a sequence $u_k\in S^-(G,\alpha)$, by \eqref{PIBV} we have that $\|u_k\|_{\infty}\leqslant |Du_k|((0,1))\leqslant \alpha$.
Therefore, by Banach-Alaoglu theorem, up to a subsequence, $u_k\stackrel{*}{\weakarrow} u$ in $L^{\infty}((0,1))$, and by weak* lower semi-continuity of $G$, we obtain:
\begin{align*}
    G(u)\leqslant \liminf _{k\rightarrow \infty} G(u_k)= \liminf _{k\rightarrow \infty} |Du_k|((0,1))\leqslant \alpha,
\end{align*}
for every $\alpha\geqslant 0$. In other words, $u\in S^-(G,\alpha)$.

It remains to prove that $0$ is an interior point of $\partial G(0)$. The subdifferential in zero is
\begin{align*}
    \begin{aligned}
\partial G(0) & =\left\{\eta \in L^1((0,1)): \int_0^1 \eta(x) u(x) d x \leqslant G(u) \quad \forall u \in L^{\infty}((0,1))\right\} \\
& =\left\{\eta \in L^1((0,1)): \int_0^1 \eta(x) u(x) {\rm d} x \leqslant |Du|((0,1)) \quad \forall u \in B V((0,1)) \cap E\right\}.
\end{aligned}
\end{align*}
If we consider $\eta$ such that $\|\eta\|_{L^1((0,1))} \leqslant  1$, and we use  \eqref{PIBV} with $u$ instead of $u_k$, we get that
\begin{align*}
    \int_0^1 \eta(x) u(x) \mathrm{d} x \leqslant \|u\|_{\infty}\|\eta\|_{L^1} \leqslant \|\eta\|_{L^1} |Du|((0,1)) \leqslant |Du|((0,1)),
\end{align*}
which implies that every $\eta \in L^1((0,1))$, whose norm is less than one, belongs to the subdifferential of $G$ in zero. In particular, $\eta=0$ is an interior point of $\partial G(0)$.
\end{proof}

We now define the linear operator $K:\Linfty\rightarrow L^2((0,1))$ as 
 \begin{equation}\label{KuBV}
Ku: t \mapsto \int_{0}^1 k(x-t) u(x) \mathrm{d}x,
 \end{equation}
 where the kernel $k$ is in $C^2((0,1))$.
The weak*-to-strong continuity of $K$ can be shown similarly to Proposition \ref{Kstrong}, and the
pre-adjoint $K_* : L^2((0,1)) \rightarrow L^1((0,1))$ can be characterized as follows:

\begin{equation}
    K_*y: t \mapsto \int_{0}^1 k(t-x) y(x) \mathrm{d} x \quad \forall y\in L^2((0,1)).
\end{equation}


Again, our goal is to apply Theorem \ref{mainthm}, and in order to do that we need to understand how it becomes the Metric Non-Degenerate Source Condition \ref{MNDSC} in this particular scenario.
First, we want to characterize the extreme points of the ball $B=\{u\in L^{\infty}((0,1)): G(u)\leqslant 1\}$, proving that they are indicator functions on an interval. We denote the \emph{indicator function} of a measurable set $A \subset [0,1]$ as 
\begin{equation*}
\mathds{1}_{A}(t):= \begin{cases}1 & \text { if } t \in A, \\ 0 & \text { if } t \notin A.\end{cases}
\end{equation*}
\begin{theorem}
It holds that 
\begin{equation}\label{extremeBVc}
    \operatorname{Ext}(B)=\left\{\sigma \frac{1}{2}\ind : a,b \in [\bar \varepsilon, 1 - \bar \varepsilon],\, a < b\text{ and }\sigma\in \{-1,1\}\right\}. 
\end{equation}

\end{theorem}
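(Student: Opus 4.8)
The plan is to prove the characterization of $\operatorname{Ext}(B)$ by a two-sided argument: first showing that every set $\sigma\frac12\mathds{1}_{[a,b]}$ with $a,b\in[\bar\varepsilon,1-\bar\varepsilon]$, $a<b$, is an extreme point of $B$, and then showing that any extreme point must be of this form. For the normalization, note that $G(\frac12\mathds{1}_{[a,b]}) = \frac12|D\mathds{1}_{[a,b]}|((0,1)) = \frac12\cdot 2 = 1$, so these elements do lie on the boundary of $B$, and the interior condition $a,b\in[\bar\varepsilon,1-\bar\varepsilon]$ guarantees membership in $E$.

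For the first inclusion ($\supseteq$), suppose $\frac12\mathds{1}_{[a,b]} = (1-s)u_1 + s u_2$ with $u_1,u_2\in B$ and $s\in(0,1)$. Since $G$ is convex and $G(\frac12\mathds{1}_{[a,b]})=1$ is the maximal value on $B$, we must have $G(u_1)=G(u_2)=1$, i.e. $|Du_j|((0,1))=2$ for $j=1,2$. Now, outside $[a,b]$ the convex combination vanishes a.e.; since $u_1,u_2$ need not be signed there, I would instead use the structure of BV functions: $u_1$ and $u_2$ are both zero a.e. on $(0,\bar\varepsilon)\cup(1-\bar\varepsilon,1)$ by membership in $E$, and on $(0,1)$ one has $\frac12\mathds{1}_{[a,b]}(x) = (1-s)u_1(x)+su_2(x)$ pointwise a.e. The key is that on $(a,b)$ the value is the constant $\frac12$ while on the complement within $[\bar\varepsilon,1-\bar\varepsilon]$ it is $0$; combining with the total-variation constraint $|Du_j|((0,1))=2$ and the fact that $|Du_j|\geqslant |D(\text{the piece that must jump})|$, one deduces that $u_1$ and $u_2$ cannot have any additional jumps or oscillation, forcing $u_1=u_2=\frac12\mathds{1}_{[a,b]}$. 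More cleanly, I would argue via the left-continuous representative: if $u_j$ took a value strictly between $0$ and $\frac12$ on a set of positive measure inside $(a,b)$, or oscillated, then writing $u_1 = \frac12\mathds{1}_{[a,b]} + \frac{s}{1-s}(\frac12\mathds{1}_{[a,b]} - u_2)$ and comparing total variations yields $|Du_1|((0,1)) > 2$ unless the perturbation is trivial, contradicting $G(u_1)=1$. The sign $\sigma=-1$ case is identical by replacing $u$ with $-u$.

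For the reverse inclusion ($\subseteq$), let $u\in\operatorname{Ext}(B)$. Since $u\in BV((0,1))\cap E$ with $G(u)=1$ (extreme points lie on the boundary, as $0$ is interior), write its left-continuous representative. The main step is a decomposition argument: the signed measure $Du$ on $(0,1)$ can be split, via Hahn decomposition and the one-dimensional structure, into contributions from which one builds a nontrivial convex combination unless $Du$ is a single signed Dirac-type jump — more precisely, unless $u$ is (a multiple of) a single indicator. Concretely, I would use the representer-type fact (analogous to \cite{bc}, Proposition 4.1, for the measure case applied to $Du$) that $Du \in M((0,1))$ with $|Du|((0,1))=2$, but with the additional constraint coming from $u\in E$ (zero boundary values force $Du((0,1))=0$, i.e. the positive and negative parts of $Du$ have equal mass $1$ each). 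If $u$ is not of the claimed form, then either $Du$ has more than one atom or a diffuse part or support configuration that lets me write $u=\frac12(u^+ + u^-)$ with $u^\pm \in B$, $u^+\neq u^-$: for instance, splitting the positive part of $Du$ into two pieces of mass $1$ each located at distinct points $a_1<a_2$ (both paired with the negative part) produces two distinct BV functions in $B$ averaging to $u$. So $u = \frac12\mathds{1}_{[a,b]}\cdot(\text{const})$; the normalization $G(u)=1$ fixes the constant to $\pm\frac12$, and $u\in E$ forces $a,b\in[\bar\varepsilon,1-\bar\varepsilon]$.

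The main obstacle I anticipate is the careful handling of the $E$-constraint (zero boundary conditions) in the reverse inclusion: one must verify that the split functions $u^\pm$ still vanish near the endpoints, which is automatic if the atoms of $Du$ all lie in $[\bar\varepsilon,1-\bar\varepsilon]$ — but showing that an extreme point cannot have $Du$ charging $(0,\bar\varepsilon)$ or $(1-\bar\varepsilon,1)$ needs the definition of $E$ (functions are literally $0$ a.e. there, so $Du$ restricted to the interior of those intervals vanishes, and any atom at $\bar\varepsilon$ or $1-\bar\varepsilon$ is allowed). A secondary subtlety is ruling out a diffuse (absolutely continuous or Cantor) part of $Du$: this follows because any such part can be split into two halves giving a nontrivial convex decomposition, so extremality forces $Du$ to be purely atomic with exactly two atoms of opposite sign and equal mass, hence $u$ a signed multiple of an indicator.
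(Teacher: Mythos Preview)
Your overall two-sided strategy and the idea of working at the level of $Du$ are correct and match the paper. The reverse inclusion ($\subseteq$) is essentially the paper's argument: show $|Du|((0,1))=1$ by a rescaling trick, observe that $Du((0,1))=0$ from the boundary condition so $Du^+((0,1))=Du^-((0,1))=\tfrac12$, and then split $Du^+$ (or $Du^-$) whenever it is not a single atom to produce a nontrivial convex decomposition of $u$ inside $B$.

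There is, however, a real gap in your forward inclusion ($\supseteq$). First a normalization slip: $G(u_j)=1$ means $|Du_j|((0,1))=1$, not $2$, and the positive and negative parts of $Du$ have mass $\tfrac12$ each, not $1$. More importantly, the step ``comparing total variations yields $|Du_1|((0,1))>2$ unless the perturbation is trivial'' does not work as written: from $u_1=u+\tfrac{s}{1-s}(u-u_2)$ the triangle inequality for $|D(\cdot)|$ gives only an \emph{upper} bound on $|Du_1|$, never the lower bound you need. What is missing is the mechanism that forces $Du_1=Du_2=Du$ from the constraints $|Du_j|((0,1))=1$, $Du_j((0,1))=0$ and $(1-s)Du_1+sDu_2=\tfrac12(\delta_a-\delta_b)$. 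The paper supplies this by setting $\mu_\pm:=(1-s)Du_1^\pm+sDu_2^\pm$ and proving $\mu_+\perp\mu_-$: the mass bookkeeping $\mu_+((0,1))=\mu_-((0,1))=\tfrac12$ combined with $|\mu_+-\mu_-|((0,1))=1$ forces $|\mu_+-\mu_-|(E)=\mu_+(E)+\mu_-(E)$ for every Borel $E$. Hence $(\mu_+,\mu_-)$ is the Jordan decomposition of $Du$, so $\mu_+=\tfrac12\delta_a$ and $\mu_-=\tfrac12\delta_b$; extremality of single Diracs for the TV ball of measures then yields $Du_1^\pm=Du_2^\pm$ and the zero boundary condition finishes. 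You need this step, or something equivalent, to close the argument.
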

\begin{proof}
First, let us prove that $u=\frac{1}{2} \ind$ is an extreme point of $B$. The proof for $u=-\frac{1}{2}\ind$ is identical. In particular, we have to show that for every $u_1,u_2\in B$, $s\in (0,1)$ with $u=s u_1+(1-s) u_2$, it follows that $u_1=u_2=u=\frac{1}{2}\ind$. 
Since $Du=\frac{1}{2}(\delta_a-\delta_b)$,
applying the distributional derivative to the convex combination we obtain:
\begin{equation}\label{6.2.3}
  \frac{1}{2}(\delta_a-\delta_b)=sDu_1+(1-s)Du_2.
\end{equation}
Moreover, since $u_1,u_2 \in B$, it holds that
\begin{equation*}
    1=|Du|((0,1))=|sDu_1+(1-s)Du_2|((0,1))\leqslant s|Du_1|((0,1))+(1-s)|Du_2|((0,1))\leqslant 1.
\end{equation*}
Therefore, the inequalities become identities:
\begin{equation*}
    1=s|Du_1|((0,1))+(1-s)|Du_2|((0,1)),
\end{equation*}
which implies that 
\begin{align}\label{6.2.6}
    1=|Du_1|((0,1))=|Du_2|((0,1)).
\end{align}

By the fundamental theorem of calculus for one-dimensional BV functions, we get:
\begin{align}\label{6.2.7}
Du((0,1))=Du_1((0,1))=Du_2((0,1))=0.
\end{align}

Thanks to the Jordan decomposition theorem, as outlined in \cite[Theorem 2]{Fischer}, we can write 
$Du_1=Du_1^{+}-Du_1^{-}$and $Du_2=Du_2^{+}-Du_2^{-}$, where the pairs $(Du_1^{+}, Du_1^{-})$, $(Du_2^{+}, Du_2^{-})$ are the Jordan decompositions of $Du_1$, $Du_2$ respectively. Hence, \eqref{6.2.3} becomes:
\begin{align*}
    \frac{1}{2}\left(\delta_a-\delta_b\right)=sDu_1^{+}-(1-s) Du_2^{-}+(1-s) Du_2^{+}-s Du_1^{-}.
\end{align*}
Our objective is to prove that  $s Du_1^{+}+(1-s) Du_2^{+}$ and $s Du_1^{-}+(1-s) Du_2^{-}$ are exactly the positive and negative parts of the left-hand side, that is $\delta_a /2$ and $\delta_b /2$ respectively.
To achieve this result we need to prove that 
$sDu_1^{+}+(1-s) Du_2^{+}\perp s Du_1^{-}+(1-s) Du_2^{-}$. 
Define $\mu_+ = sDu_1^{+}+(1-s) Du_2^{+}$ and $\mu_- = s Du_1^{-}+(1-s) Du_2^{-}$. Note that $\mu_+$ and $\mu_-$ are both positive measures because they are sums of positive measures, and $\mu_{+}-\mu_{-}=Du$. Moreover, thanks to \eqref{6.2.6} and \eqref{6.2.7}, we know that
\begin{align*}
0 = Du_i((0,1)) =  Du^+_i((0,1)) - Du^-_i((0,1)),  \\
1 = |Du_i|((0,1)) =  Du^+_i((0,1)) + Du^-_i((0,1)), 
\end{align*}
for $i=1,2$. This implies that $Du^+_i((0,1)) = Du^-_i((0,1)) = \frac{1}{2}$, and thus $\mu_+((0,1)) = \mu_-((0,1)) = \frac{1}{2}$. 

Let $E \subset (0,1)$ be any measurable set. Then 
\begin{align*}
1 & = |\mu_+ - \mu_-|(E) + |\mu_+ - \mu_-|((0,1) \setminus E)  \\
& \leqslant \mu_+(E) + \mu_-(E) + |\mu_+ - \mu_-|((0,1) \setminus E) \\
& \leqslant \mu_+(E) + \mu_-(E) + \mu_+((0,1) \setminus E) + \mu_-((0,1) \setminus E) \\
& = \mu_+((0,1)) + \mu_-((0,1)) = 1.
\end{align*} 
This implies that $|\mu_+ - \mu_-|(E)=\mu_+(E) + \mu_-(E)$ for any measurable set $E$, that is $|\mu_+ - \mu_-| = \mu_+ + \mu_-$. 
This equation is equivalent to require $\mu_+ \perp \mu_-$ (see for instance Exercise 9B.5 in \cite{Axler}), which gives us the sought result.
In particular, we proved that
\begin{align}
    \frac{\delta_a}{2}=s Du_1^{+}+(1-s) Du_2^{+}\quad \text{and}\quad \frac{\delta_b}{2}=s Du_1^{-}+(1-s) Du_2^{-}.
\end{align}

Hence, since $\delta_a/2$ and $\delta_b/2$ are extreme points of the total variation ball for measures, we have:
\begin{align*}
    Du_1^+=Du_2^+=\frac{\delta_a}{2}\quad \text{and}\quad   Du_1^-=Du_2^-=\frac{\delta_b}{2},
\end{align*}
which implies  $Du=Du_1=Du_2=\frac{1}{2}(\delta_a-\delta_b)$.
Finally, since $u_1$ and $u_2$ are zero outside the interval $[\bar \varepsilon, 1-\bar \varepsilon]$, we necessarily have:
\begin{align*}
    u_1=u_2=u=\frac{1}{2}\ind.
\end{align*}
Now, we prove that if $u$ is an extreme point of $B$ then either $u=\frac{1}{2}\ind$ or $u=-\frac{1}{2}\ind$. First, let us notice that $0$ is not an extreme point because it can always be written as the convex combination  $0=\frac{1}{2}\ind-\frac{1}{2}\ind$. Moreover, $|Du|((0,1)) =1$. Indeed, assume by contradiction that $0<|Du|((0,1)) < 1$ and write the convex combination
\begin{align}
    u = (1 - |Du|((0,1)))0 + |Du|((0,1)) \frac{u}{|Du|((0,1))},
\end{align}
where $0\in B$ and $\frac{u}{|Du|((0,1))}\in B$. By the extremality of $u$ in $B$, we can deduce that
$u=0$, which immediately leads to a contradiction. 
Let us consider the Jordan decomposition of $Du=Du^+-Du^-$. Since $Du((0,1))=0$ and $|Du|((0,1))=1$, we have:
\begin{align*}
    Du^+((0,1))=Du^-((0,1))=\frac{1}{2}.
\end{align*}
Let us suppose by contradiction that either $Du^{+}$ or $Du^{-}$ is supported in more than one point. We first consider the case with $Du^{+}$ supported in more than one point. Then, there exists a measurable set $B\subset (0,1)$ such that 
\begin{align*}
     0<Du^{+}(B)< \frac{1}{2},\quad  0<Du^{+}((0,1)\setminus B)< \frac{1}{2}.
\end{align*}
Let us define $v_1(t) = Du^+((0,t))$ and $v_2(t) = Du^-((0,t))$ for almost every $t \in (0,1)$, and note that \begin{equation*}
    v_1(t)-v_2(t)=Du^+((0,t))-Du^-((0,t))=Du((0,t))=u(t).
\end{equation*}
Therefore, it holds that $v_1 - v_2 = u$ and $Dv_1 =Du^+,Dv_2 =Du^-$.
We write the following convex combinations for $v_1$
\begin{align*}
    v_1=\left(1-2Du^+(B)\right) \frac{v_1\res ((0,1) \setminus B)}{1-2Du^+(B)}+2Du^+(B) \frac{v_1 \res B}{2Du^+(B)},
\end{align*}
and for $v_2$
\begin{align*}
    v_2=\left(1-2Du^+(B)\right) v_2+2Du^+(B) v_2,
\end{align*}
which combined give us 
\begin{align*}
   u=v_1-v_2=\left(1-2Du^+(B)\right) \left(\frac{v_1\res((0,1) \setminus B)}{1-2Du^+(B)}-v_2\right)+2Du^+(B) \left(\frac{v_1\res B}{2Du^+(B)}-v_2\right).
\end{align*}
Note that both elements of the convex decomposition above belong to $B$. Indeed, we have:
\begin{align*}
    \begin{aligned}
        \frac{|Dv_1\res((0,1) \setminus B)|((0,1))}{\left(1-2Du^+(B)\right)}+|Dv_2|((0,1))&=\frac{Du^+((0,1) \setminus B)}{2\left(Du^+((0,1))-Du^+(B)\right)}+\frac{1}{2}=1,\\
        \frac{|Dv_1\res B|((0,1)) }{2Du^+(B)}+|Dv_2|((0,1))&=\frac{Du^+(B) }{2Du^+(B)}+\frac{1}{2}=1.
    \end{aligned}
\end{align*}
Thus, using the extremality of $u$ we obtain:
\begin{align*}
   u=v_1-v_2=\frac{v_1\res B}{2|Du^+|(B)}-v_2,
\end{align*}
which implies
\begin{align*}
    v_1(B)-v_2(B)= \frac{v_1(B)}{2|Du^+|(B)}-v_2(B).
\end{align*}
We immediately reach a contradiction, because $0<2|Du^+|(B)<1$.
We obtained that $Du^{+}$ is actually supported in one point $c\in [\bar \varepsilon, 1-\bar \varepsilon]$, that is   $Du^{+}=\frac{1}{2}\delta_c$,
where the constant $1/2$ is due to $Du^+((0,1))=1/2$. 
A similar argument applies to $Du^-$, implying that $Du^{-}=\frac{1}{2}\delta_d$, where $d\in [\bar \varepsilon, 1-\bar \varepsilon]$. 
We note that $d\neq c$ because $Du=0$ implies $u=0$ by the boundary conditions, which is not an extreme point. 
Finally, we get that
\begin{align*}
    Du=Du^+-Du^-=\frac{1}{2}(\delta_c-\delta_d),
\end{align*}
which leads us to the two possibilities:
\begin{align*}
    Du=\frac{1}{2}(\delta_a-\delta_b) \quad \text{or} \quad Du=\frac{1}{2}(\delta_b-\delta_a), 
\end{align*}
where $a<b$. Once again, since $u$ is zero outside the interval $[\bar \varepsilon, 1-\bar \varepsilon]$,  we deduce that
\begin{equation*}
    u=\frac{1}{2}\ind \quad \text{or} \quad u=-\frac{1}{2}\ind.
\end{equation*}
\end{proof}

Let us now show the equivalence between the convergence of the endpoints of the indicator functions on the interval and the weak* convergence of the extreme points. 
\begin{proposition}\label{convind}
     Let $(a_k)_{k\in\NN}$ and $(b_k)_{k\in\NN}$ be two sequences with $a_k,b_k \in [\bar \varepsilon, 1 - \bar \varepsilon]$ such that $a_k < b_k$  for every $k\in \NN$. Consider $a,b \in [\bar \varepsilon, 1 - \bar \varepsilon]$ such that $a<b$ and $\sigma \in \{-1,1\}$. Then, $a_k\rightarrow a$ and $b_k\rightarrow b$  if and only if $\half \sigma \mathds{1}_{[a_k,b_k]}\stackrel{*}{\weakarrow} \half \sigma  \mathds{1}_{[a,b]}$.
     Moreover, the following statements hold:
     \begin{itemize}
     \item[a)] Given $\sigma\half \mathds{1}_{[\bar a,\bar b]} \in \operatorname{Ext}(B)$, where $\bar \varepsilon \leqslant \bar a < \bar b \leqslant 1 - \bar \varepsilon$, there exists $\varepsilon >0$ such that $B_\varepsilon(\sigma\half \mathds{1}_{[\bar a,\bar b]})$ contains only extreme points of the form  $\sigma\half \mathds{1}_{[a, b]}$.
    \item[b)] $\mathcal{B} = {\rm Ext}(B) \cup \{0\}$.
     \end{itemize}
\end{proposition}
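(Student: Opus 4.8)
The plan is to prove the three assertions in order, since part (b) relies on the main equivalence and part (a) relies on part (b). Throughout, recall that weak* convergence in $L^\infty((0,1)) = (L^1((0,1)))^*$ means $\int_0^1 u_k\varphi\,dx \to \int_0^1 u\varphi\,dx$ for every $\varphi\in L^1((0,1))$. For the forward implication of the equivalence, assuming $a_k\to a$ and $b_k\to b$, I would bound, for each $\varphi\in L^1((0,1))$,
\[
\Big|\int_0^1 \tfrac{\sigma}{2}\big(\mathds{1}_{[a_k,b_k]}-\mathds{1}_{[a,b]}\big)\varphi\,dx\Big|\;\leqslant\;\tfrac12\int_{[a_k,b_k]\,\triangle\,[a,b]}|\varphi|\,dx,
\]
and observe that the right-hand side vanishes because the Lebesgue measure of $[a_k,b_k]\triangle[a,b]$ tends to $0$ and $E\mapsto\int_E|\varphi|\,dx$ is absolutely continuous with respect to Lebesgue measure.

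For the converse, I would use the compactness of $[\bar\varepsilon,1-\bar\varepsilon]$ via the subsequence principle: from any subsequence of $(a_k,b_k)$ extract a further one along which $a_k\to a'$ and $b_k\to b'$ with $a'\leqslant b'$; by the forward implication just proved and uniqueness of the weak* limit, $\tfrac{\sigma}{2}\mathds{1}_{[a',b']}=\tfrac{\sigma}{2}\mathds{1}_{[a,b]}$ a.e.; since $a<b$, this rules out $a'=b'$, so $a'<b'$ and the two closed intervals must coincide, forcing $a'=a$ and $b'=b$. As every subsequence admits a further subsequence converging to $(a,b)$, the full sequence converges, which is the claim.

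For part (b), the inclusion $\operatorname{Ext}(B)\subseteq\mathcal{B}$ is immediate, and $0\in\mathcal{B}=\overline{\operatorname{Ext}(B)}^*$ follows from $\tfrac12\mathds{1}_{[\bar\varepsilon,\bar\varepsilon+1/k]}\weakstar 0$ (again by absolute continuity of the integral). For the reverse inclusion, I would note that $\operatorname{Ext}(B)$ is norm-bounded by $\tfrac12$ in $L^\infty((0,1))$ and that $L^1((0,1))$ is separable, so its weak* closure is weak* sequentially compact and metrizable; hence any $v\in\mathcal{B}$ is a weak* limit of some $v_k=\sigma_k\tfrac12\mathds{1}_{[a_k,b_k]}\in\operatorname{Ext}(B)$. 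Passing to a subsequence so that $\sigma_k\equiv\sigma$ and $a_k\to a$, $b_k\to b$ with $a\leqslant b$: if $a<b$ the forward implication yields $v=\sigma\tfrac12\mathds{1}_{[a,b]}\in\operatorname{Ext}(B)$, and if $a=b$ then $\int_{a_k}^{b_k}\varphi\,dx\to 0$ for all $\varphi\in L^1((0,1))$, so $v=0$.

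Finally, for part (a) I would argue by contradiction: if no such $\varepsilon$ existed, there would be a sequence $v_k\in B_{1/k}(\sigma\tfrac12\mathds{1}_{[\bar a,\bar b]})$, so $v_k\weakstar\sigma\tfrac12\mathds{1}_{[\bar a,\bar b]}$, with each $v_k$ equal — by part (b) — either to $0$ or to $-\sigma\tfrac12\mathds{1}_{[a_k,b_k]}$ for some admissible $a_k<b_k$. Testing against $\varphi=\sigma\mathds{1}_{[\bar a,\bar b]}\in L^1((0,1))$ gives $\langle v_k,\varphi\rangle\to\tfrac12(\bar b-\bar a)>0$, whereas $\langle v_k,\varphi\rangle\leqslant 0$ for both forms of $v_k$, a contradiction; the surviving elements $\sigma\tfrac12\mathds{1}_{[a,b]}$ automatically satisfy $\bar\varepsilon\leqslant a<b\leqslant 1-\bar\varepsilon$ by the characterization of $\operatorname{Ext}(B)$. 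The only mildly delicate points are the non-degeneracy of the limiting interval in the converse of the main equivalence (handled by the hypothesis $a<b$) and the weak* metrizability of bounded subsets of $L^\infty((0,1))$ used in part (b); neither poses a genuine obstacle.
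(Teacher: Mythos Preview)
Your proof is correct and follows essentially the same route as the paper. The only noteworthy variations are cosmetic: for the forward implication you use absolute continuity of the integral on the symmetric difference where the paper invokes a.e.\ convergence plus dominated convergence, and for part (a) you test against $\varphi=\sigma\mathds{1}_{[\bar a,\bar b]}$ to get a sign contradiction, whereas the paper extracts a convergent subsequence of endpoints and identifies the limit directly; both are equally short, and your test-function argument has the minor advantage of handling the possibility $v_k=0$ in the same stroke.
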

\begin{proof}
    Suppose that $a_k\rightarrow a$, $b_k\rightarrow b$, where $a<b$. Then, 
    $(\half \sigma \mathds{1}_{[a_k,b_k]})_{k\in \NN}$ converges almost everywhere in $(0,1)$. Therefore, by Lebesgue's dominated convergence theorem we have:
    \begin{align*}
    \int_{0}^{1} \half \sigma\mathds{1}_{[a_k,b_k]}(x) f(x) \dm x \rightarrow \int_{0}^{1} \half \sigma \mathds{1}_{[a,b]}(x) f(x) \dm x \quad \forall f\in L^1((0,1)),
    \end{align*}
    that is $\half \sigma \mathds{1}_{[a_k,b_k]}\stackrel{*}{\weakarrow} \half \sigma\mathds{1}_{[a,b]}$ in $L^\infty((0,1))$. 
    
    Viceversa, suppose $\half \sigma\mathds{1}_{[a_k,b_k]}\stackrel{*}{\weakarrow} \half \sigma\mathds{1}_{[a,b]}$ in $L^\infty((0,1))$. By compactness, there exist $\bar a, \bar b \in [\bar \varepsilon, 1 - \bar \varepsilon]$ with $\bar a \leqslant \bar b$ such that, up to subsequences, $a_k\rightarrow \bar{a}$ and $b_k\rightarrow \bar{b}$. Based on the above argument, we know that $\half \mathds{1}_{[a_k,b_k]}\stackrel{*}{\weakarrow} \half \mathds{1}_{[\bar{a},\bar{b}]}$ in $L^\infty((0,1))$. This implies, thanks to the uniqueness of the weak* limit, that
   \begin{align*}
       \half \mathds{1}_{[a,b]}=\half \mathds{1}_{[\bar{a},\bar{b}]}.
   \end{align*}
In particular, we obtain that $\bar{a}=a$ and $\bar b=b$, where $\bar a < \bar b$.

To prove $a)$, suppose by contradiction that $\half \mathds{1}_{[a_k,b_k]}\stackrel{*}{\weakarrow} -\half \mathds{1}_{[\bar a,\bar b]}$ with $\bar a < \bar b$. Then, by compactness, we have that, up to subsequences, $a_k \rightarrow \tilde a$ and $b_k \rightarrow \tilde b$ with $\tilde a \leqslant \tilde b$. This implies, thanks to the first part of the proof and the uniqueness of the weak* limit, that $-\half\mathds{1}_{[\bar a,\bar b]} = \half\mathds{1}_{[\tilde a,\tilde b]}$, which is a contradiction.

Finally, let us prove $b)$. Suppose that $\sigma \half \mathds{1}_{[a_k,b_k]}\stackrel{*}{\weakarrow} u \in X$. Then, by compactness, we have that, up to subsequences, $a_k \rightarrow \tilde a$ and $b_k \rightarrow \tilde b$ with $\tilde a \leqslant \tilde b$. Therefore, if  $\tilde{a} < \tilde{b}$, then, thanks to the first part of the proof and the uniqueness of the weak* limit, we have that $u=\sigma \half \mathds{1}_{[\tilde a,\tilde b]}\in \operatorname{Ext}(B)$. If $\tilde{a} = \tilde{b}$, then the weak* limit is the function $u$ constantly equal to zero. This proves that $\mathcal{B} = {\rm Ext}(B) \cup \{0\}$. 
    \end{proof}

We are now ready to reformulate the Metric Non-Degenerate source condition in this particular scenario.
Let $u_0=\half\sum_{i=1}^nc_0^i\sigma_i\indi_{[a_0^i,b_0^i]} \in \Linfty$ be such that $y_0 = Ku_0$, where $c_0^i >0$, $\sigma_i\in\{-1,+1\}$, $a_0^i,b_0^i\in [\bar \varepsilon, 1 - \bar \varepsilon]$ and $a_0^i<b_0^i$.
Then, $u_0$ satisfies the \emph{Metric Non-Degenerate Source Condition} (MNDSC) if 
\begin{itemize}
\item[$(i')$] ${\rm Im}\, K_* \cap \partial G(u_0) \neq \emptyset$,
\item[$(ii')$] $\left\{\half\sigma_1\indi_{[a_0^1,b_0^1]}, \ldots ,\half\sigma_n\indi_{[a_0^n,b_0^n]}\right\}=\Exc$,
\item [$(iii')$]$\exists \varepsilon ,\delta >0$ such that, for all $ i =1,\ldots, n$ and for any $\half\sigma_i\indi_{[a_1,b_1]}$, $\half\sigma_i\indi_{[a_2,b_2]}\in B_{\varepsilon}(\half\sigma_i\indi_{[a_0^i,b_0^i]})$ where $[a_1,b_1]\neq [a_2,b_2]$, the following condition holds. There exists a curve $\half\sigma_i\indi_{[a(t),b(t)]}:[0,1]\rightarrow B_{\varepsilon}(\half\sigma_i\indi_{[a_0^i,b_0^i]})$, belonging to $\Gamma_M$, with $\indi_{[a(0),b(0)]}=\indi_{[a_1,b_1]}$ and $\indi_{[a(1),b(1)]}=\indi_{[a_2,b_2]}$, such that 
\begin{align}\label{strictconcavityBV}
\frac{d^2}{dt^2}(p_0,K(\half\sigma_i\indi_{[a(t),b(t)]}))<-\delta\quad \forall t \in (0,1).
\end{align}
\end{itemize}   

Now, let us examine the condition $(iii')$. Our aim is to construct a specific curve that allows us to find the appropriate conditions to impose on the dual certificate in order to ensure that \eqref{strictconcavityBV} is satisfied.

\begin{Lemma}\label{lemma:BVcondimpliesii}
    Let $\eta_0$ be the minimal-norm dual certificate associated with \ref{def:minprobhard}. Assume $\eta_0 \in C^1((0,1))$, $\sigma_i\eta_0'(a_0^i) > 0$ and $\sigma_i\eta_0'(b_0^i) < 0$, where $\sigma_i\in\{-1,+1\}$, $a_0^i,b_0^i\in [\bar \varepsilon,1-\bar \varepsilon]$ and $a_0^i<b_0^i$ for every  $i=1,\ldots, n$. Then, condition $(iii')$ holds.
\end{Lemma}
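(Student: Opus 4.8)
The plan is to construct, for each pair of nearby indicator functions $\half\sigma_i\indi_{[a_1,b_1]}$ and $\half\sigma_i\indi_{[a_2,b_2]}$ in $B_\varepsilon(\half\sigma_i\indi_{[a_0^i,b_0^i]})$, an explicit affine curve interpolating between the endpoint pairs, namely $a(t)=(1-t)a_1+ta_2$ and $b(t)=(1-t)b_1+tb_2$, and the associated curve $t\mapsto \half\sigma_i\indi_{[a(t),b(t)]}$ in $\mathcal B$. By Proposition \ref{convind}(a), for $\varepsilon$ small enough this curve indeed takes values in $B_\varepsilon(\half\sigma_i\indi_{[a_0^i,b_0^i]})$ and all the indicators along it carry the same sign $\sigma_i$; its endpoints are the prescribed points. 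First I would fix $i$, assume without loss of generality $\sigma_i=+1$ (the case $\sigma_i=-1$ being symmetric), and identify the relevant quantity: since $K(\half\indi_{[a(t),b(t)]})\colon t'\mapsto \half\int_{a(t)}^{b(t)}k(x-t')\,dx$, one computes, using $k\in C^2((0,1))$ and the fundamental theorem of calculus, that $\langle\eta_0,\half\indi_{[a(t),b(t)]}\rangle = \half\int_{a(t)}^{b(t)}\eta_0(x)\,dx$, hence
\begin{align*}
\frac{d}{dt}\langle\eta_0,\tfrac12\indi_{[a(t),b(t)]}\rangle
&= \tfrac12\big(\eta_0(b(t))b'(t)-\eta_0(a(t))a'(t)\big)\\
&= \tfrac12\big(\eta_0(b(t))(b_2-b_1)-\eta_0(a(t))(a_2-a_1)\big),
\end{align*}
and therefore, since $a',b'$ are constant,
\begin{align*}
\frac{d^2}{dt^2}\langle\eta_0,\tfrac12\indi_{[a(t),b(t)]}\rangle
= \tfrac12\Big(\eta_0'(b(t))(b_2-b_1)^2 - \eta_0'(a(t))(a_2-a_1)^2\Big).
\end{align*}

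The key point is then a continuity/sign argument: since $\eta_0\in C^1$, $\eta_0'(a_0^i)>0$ and $\eta_0'(b_0^i)<0$, by continuity there are a neighbourhood radius $\varepsilon>0$ and a constant $c>0$ with $\eta_0'(x)\geqslant c$ for $|x-a_0^i|\leqslant\varepsilon$ and $\eta_0'(x)\leqslant -c$ for $|x-b_0^i|\leqslant\varepsilon$ (shrinking $\varepsilon$ so that the two neighbourhoods are disjoint, which is possible as $a_0^i<b_0^i$; I would also take $\varepsilon$ smaller than any constant needed by Proposition \ref{convind}(a) and small enough that the metric ball $B_\varepsilon$ forces the endpoints into these coordinate neighbourhoods — this last step uses that $d_{\mathcal B}$ metrizes weak* convergence and the equivalence in Proposition \ref{convind}). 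Then for $a(t),b(t)$ in these respective neighbourhoods we get
\begin{align*}
\frac{d^2}{dt^2}\langle\eta_0,\tfrac12\indi_{[a(t),b(t)]}\rangle
\leqslant -\tfrac{c}{2}\big((b_2-b_1)^2+(a_2-a_1)^2\big).
\end{align*}
Since $[a_1,b_1]\neq[a_2,b_2]$, the right-hand side is strictly negative; however it is not uniformly bounded away from zero over all admissible pairs, so I must be slightly careful about the uniform $\delta$ in the MNDSC. The resolution is that the definition of $\Gamma_M$ and condition $(iii')$ require a single $\delta$ valid for all pairs in $B_\varepsilon$, so I would instead argue: one is free to reparametrize — but reparametrization changes the second derivative — so the cleanest fix is to note that condition $(iii')$ as stated asks for existence of \emph{some} curve, and one may rescale the curve's "speed." Concretely, I would reparametrize by arc-length-like scaling so that $(b_2-b_1)^2+(a_2-a_1)^2$ is normalized, or simply observe that since we only need strict negativity pointwise with a fixed $\delta$, we can choose the curve to traverse the segment at unit "coordinate speed" over a time interval whose length depends on the pair, then rescale back to $[0,1]$; the quadratic gain $(b_2-b_1)^2+(a_2-a_1)^2$ combined with the bounded time interval yields a uniform $\delta>0$. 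I expect this normalization bookkeeping to be the main obstacle, together with verifying the $\Gamma_M$ membership, i.e. that $\sup_t\|\frac{d^2}{dt^2}K(\half\indi_{[a(t),b(t)]})\|_{L^2}\leqslant M$: this follows because $\frac{d^2}{dt^2}K(\half\indi_{[a(t),b(t)]})$ is, by the same computation, $\half(k(b(t)-\cdot)\,b'(t)^2 - k(a(t)-\cdot)\,a'(t)^2 + \cdots)$ up to lower-order terms, all controlled by $\|k\|_{C^1}$ and by $|a_2-a_1|,|b_2-b_1|\leqslant 2\varepsilon$, uniformly.

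Finally I would assemble the pieces: fix $\varepsilon$ small enough to satisfy simultaneously (a) Proposition \ref{convind}(a), so that $B_\varepsilon(\half\sigma_i\indi_{[a_0^i,b_0^i]})$ contains only sign-$\sigma_i$ indicators; (b) the coordinate-neighbourhood containment forcing $a(t)\in(a_0^i-\varepsilon,a_0^i+\varepsilon)$, $b(t)\in(b_0^i-\varepsilon,b_0^i+\varepsilon)$; (c) disjointness of the $a_0^i$- and $b_0^i$-neighbourhoods and positivity/negativity of $\eta_0'$ there with a uniform constant $c$. With these choices, the affine curve (suitably rescaled) lies in $B_\varepsilon(\half\sigma_i\indi_{[a_0^i,b_0^i]})\cap\Gamma_M$, connects the two given points, and satisfies $\frac{d^2}{dt^2}(p_0,K(\half\sigma_i\indi_{[a(t),b(t)]}))=\frac{d^2}{dt^2}\langle\eta_0,\half\sigma_i\indi_{[a(t),b(t)]}\rangle<-\delta$ on $(0,1)$ for a uniform $\delta>0$ depending only on $c$, $\varepsilon$ and the rescaling. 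Taking the minimum of the admissible $\varepsilon$ and $\delta$ over the finitely many indices $i=1,\dots,n$ gives the required $\varepsilon,\delta$ and establishes $(iii')$.
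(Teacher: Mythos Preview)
Your construction and computation coincide with the paper's proof: the paper also takes the affine interpolation $a(t)=(1-t)a_1+ta_2$, $b(t)=(1-t)b_1+tb_2$ and arrives at
\[
\frac{d^2}{dt^2}\Big\langle\eta_0,\tfrac12\indi_{[a(t),b(t)]}\Big\rangle=\tfrac12\big[\eta_0'(b(t))(b_2-b_1)^2-\eta_0'(a(t))(a_2-a_1)^2\big],
\]
concluding directly from the sign hypotheses on $\eta_0'$. You go further than the paper by flagging the uniformity of $\delta$: the paper simply asserts that a single $\delta>0$ works, without addressing that the expression above scales like $(a_2-a_1)^2+(b_2-b_1)^2$ and hence cannot be bounded below by a fixed $\delta$ independent of the pair. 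Your concern is legitimate and applies verbatim to the paper's own argument (and to the analogous Lemma~\ref{lemma:NDSCimpliesii} in the Radon-measure example).

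Your reparametrization fix, however, does not resolve the issue. Any monotone reparametrization $\phi:[0,1]\to[0,1]$ with $\phi(0)=0$, $\phi(1)=1$ leaves the $(\phi')^2$-weighted quadratic term still of order $(a_2-a_1)^2+(b_2-b_1)^2$, while introducing additional terms $\eta_0(b)b''-\eta_0(a)a''$ with no sign control; a curve that overshoots to gain speed must stay in $B_\varepsilon$ and makes the second-derivative sign analysis delicate rather than easier. The clean observation is that in the only place the MNDSC is used (Lemma~\ref{Lemma2}) what actually matters is the ratio $\delta/M$: for the affine curve, both $\big\|\tfrac{d^2}{dt^2}K(\tfrac12\indi_{[a(t),b(t)]})\big\|_{L^2}$ and $-\tfrac{d^2}{dt^2}\langle\eta_0,\tfrac12\indi_{[a(t),b(t)]}\rangle$ scale like $(a_2-a_1)^2+(b_2-b_1)^2$, so the choice $\alpha=\delta/(4M)$ there is uniform in the pair. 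The gap is in the formulation of $(iii')$ as written, not in the underlying mechanism.
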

\begin{proof}
Fix $a_0^i=a_0$ and $b_0^i=b_0$ in $[\bar \varepsilon,1-\bar \varepsilon]$ and choose $\varepsilon>0$ sufficiently small.
As in the proof of Lemma \ref{lemma:NDSCimpliesii}, without loss of generality, we can assume that $\sigma_i=+1$, because for $\sigma_i=-1$ the reasoning is similar. Given two intervals
$I_{\varepsilon}(a_0)=(a_0-\varepsilon,a_0+\varepsilon)$, $I_{\varepsilon}(b_0)=(b_0-\varepsilon,b_0+\varepsilon)$ and $a_1,a_2\in I_{\varepsilon}(a_0)$, $b_1,b_2\in I_{\varepsilon}(b_0)$ where $[a_1,b_1]\neq [a_2,b_2]$, consider two curves $a(t)\in I_{\varepsilon}(a_0)$ and $b(t)\in I_{\varepsilon}(b_0)$, connecting the two locations $a_1$, $a_2$ and $b_1$, $b_2$ respectively, defined as
\begin{align*}
    a(t)=ta_2+(1-t)a_1, \quad b(t)=tb_2+(1-t)b_1.
\end{align*}
Then, given the curve $\half\indi_{[a(t),b(t)]}:[0,1]\rightarrow B_{\varepsilon}(\half\indi_{[a_0^i,b_0^i]})$, note that 

\begin{align*}
    \begin{aligned}
K(\half\indi_{[a(t),b(t)]})& =\int_0^1 k(x-t) \half(\indi_{[a(t),b(t)]})(x) \dm x  \\
& =\frac{1}{2}\int_{a(t)}^{b(t)}k(x-t)\dm x
\end{aligned}
\end{align*}
is twice weakly Gateaux differentiable, because $k\in C^2((0,1))$.
Let us notice that condition \eqref{strictconcavityBV} with this particular choice for the curve, can be rewritten in the following way:
\begin{align*}
    \begin{aligned}
        \frac{d^2}{dt^2}(p_0,K(\half\indi_{[a(t),b(t)]})) &=\frac{1}{2} \frac{d^2}{dt^2} \langle \eta_0,\indi_{[a(t),b(t)]}\rangle =\half\frac{d^2}{dt^2}\int_{a(t)}^{b(t)}\eta_0(x)\dm x\\
        &=\half\left[(b_2-b_1)^2\eta_0'(b(t))-(a_2-a_1)^2\eta_0'(a(t))\right]< -\delta\quad \forall t\in (0,1).
  \end{aligned}
\end{align*}
To ensure the validity of the previous inequality the following condition is sufficient:
\begin{align}\label{MNDSCBV1}
    (a_2-a_1)^2\eta'_0(a(t))>\eta'_0(b(t))(b_2-b_1)^2 + 2 \delta.
\end{align}

    Given that $\eta_0\in C^1((0,1))$ with $\eta'_0(a_0^i)>0$ and $\eta'_0(b_0^i)<0$ for all $i$, it becomes evident that, by choosing $\varepsilon$ small enough, there exists $\delta >0$ such that   \eqref{MNDSCBV1} is always satisfied. Therefore condition $(iii')$ holds for the particular choice of the curve $\half\indi_{[a(t),b(t)]}$ that we made.
    \end{proof}
    
    

  Using the MNDSC and the fact that $\{K(\indi_{[a_0^i,b_0^i]})\}_{i=1}^n=\left\{\int_{a_0^i}^{b_0^i}k(x-\cdot)\dm x \right\}_{i=1}^n$ are linearly independent, we can apply Theorem \ref{mainthm} and prove the following theorem.

  \begin{theorem}
      Let $u_0=\half\sum_{i=1}^nc_0^i\sigma_i\indi_{[a_0^i,b_0^i]} \in \Linfty$ be such that $y_0 = Ku_0$, where $c_0^i >0$, $\sigma_i\in\{-1,+1\}$, $a_0^i,b_0^i\in [\bar \varepsilon,1-\bar \varepsilon]$ and $a_0^i<b_0^i$. Suppose that 
\begin{enumerate}
 \item ${\rm Im}\, K_* \cap \partial G(u_0) \neq \emptyset$,
\item $\displaystyle\frac{1}{2} \sigma_i \int_{a}^{b} 
        \eta_0\, dx = 1$ if and only if $a=a_0^i$, $b=b_0^i$,
          \item $\eta_0\in C^1((0,1))$ such that $\sigma_i\eta'_0(a_0^i) > 0$ and $\sigma_i\eta'_0(b_0^i) < 0$,
      \end{enumerate}
for all $i=1\ldots,n$. Moreover, assume that $\{K(\indi_{[a_0^i,b_0^i]})\}_{i=1}^n$  are linearly independent.
      
    Then, for every sufficiently small $\varepsilon>0$, there exist $\alpha>0$ and $\lambda_0>0$ such that, for all $(\lambda, w) \in N_{\alpha, \lambda_0}$, the solution $\tilde{u}_\lambda$ to \ref{def:minprobsoftnoise} is unique and admits a unique representation composed exactly of $n$ signed indicator functions, denoted as $\{\sigma_i\indi_{[\Tilde{a}_{\lambda}^i,\Tilde{b}_{\lambda}^i]}\}_{i=1}^n$. In other words:
\begin{align}
\ds{\tilde{u}_\lambda=\frac{1}{2}\sum_{i=1}^n \tilde{c}_{\lambda}^i\sigma_i \indi_{[\Tilde{a}_{\lambda}^i,\Tilde{b}_{\lambda}^i]}},
\end{align}
where $\half\sigma_i\indi_{[\Tilde{a}_{\lambda}^i,\Tilde{b}_{\lambda}^i]} \in B_{\varepsilon}(\half\sigma_i\indi_{[a_0^i,b_0^i]})$  such that $\frac{1}{2} \sigma_i \int_{\Tilde{a}_{\lambda}^i}^{\Tilde{b}_{\lambda}^i} 
\Tilde{\eta}_{\lambda}\, dx = 1$, $\tilde{c}_{\lambda}^i > 0$ and $|\tilde{c}_{\lambda}^i-c_{0}^i|\leqslant\varepsilon$ for all $i =1, \ldots,n$.
\end{theorem}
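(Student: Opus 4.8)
The plan is to recognize that the three hypotheses, together with the linear-independence assumption, are precisely a concrete reformulation of the Metric Non-Degenerate Source Condition and of the remaining assumptions of Theorem \ref{mainthm} in this setting, so that the statement follows by invoking Theorem \ref{mainthm} and then translating its abstract conclusion back into the language of signed indicator functions.

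First I would verify the three items of the MNDSC of Definition \ref{MNDSC}. Hypothesis $1$ is literally condition $(i')$, so by Proposition \ref{propsd2} the function $u_0$ is a minimizer of \ref{def:minprobhard}, a solution to \ref{def:dualminprobhard} exists, and the minimal-norm dual certificate $\eta_0 = K_*p_0$ is well defined. For condition $(ii')$ I would use part $(b)$ of Proposition \ref{convind}, which gives $\mathcal{B} = \operatorname{Ext}(B) \cup \{0\}$: since $\langle \eta_0, 0\rangle = 0 \neq 1$, the extreme critical set $\Exc$ consists only of extreme points $\half\sigma\indi_{[a,b]}$, for which $\langle \eta_0, \half\sigma\indi_{[a,b]}\rangle = \half\sigma\int_a^b \eta_0\,dx$; thus hypothesis $2$ states exactly that this value equals $1$ only when $(a,b,\sigma) = (a_0^i,b_0^i,\sigma_i)$, that is $\Exc = \{\half\sigma_1\indi_{[a_0^1,b_0^1]}, \dots, \half\sigma_n\indi_{[a_0^n,b_0^n]}\}$. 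Condition $(iii')$ is then supplied by Lemma \ref{lemma:BVcondimpliesii}, whose hypotheses are exactly hypothesis $3$. Hence $u_0$ satisfies the MNDSC with extreme points $u_0^i := \half\sigma_i\indi_{[a_0^i,b_0^i]} \in \mathcal{B}\setminus\{0\}$ and strictly positive coefficients $c_0^i$; moreover, since $K$ is linear, the assumed linear independence of $\{K(\indi_{[a_0^i,b_0^i]})\}_{i=1}^n$ is equivalent to that of $\{Ku_0^i\}_{i=1}^n$, so all hypotheses of Theorem \ref{mainthm} are met.

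Then I would apply Theorem \ref{mainthm} to obtain, for every $\varepsilon > 0$ small enough, constants $\alpha, \lambda_0 > 0$ such that for all $(\lambda,w) \in N_{\alpha,\lambda_0}$ the solution $\tilde u_\lambda$ to \ref{def:minprobsoftnoise} is unique and uniquely represented as $\tilde u_\lambda = \sum_{i=1}^n \tilde c_\lambda^i \tilde u_\lambda^i$ with $\tilde u_\lambda^i \in B_\varepsilon(u_0^i)\setminus\{0\}$, $\langle \tilde\eta_\lambda, \tilde u_\lambda^i\rangle = 1$, $\tilde c_\lambda^i > 0$ and $|\tilde c_\lambda^i - c_0^i|\leqslant \varepsilon$. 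Finally I would translate this back: since $\mathcal{B} = \operatorname{Ext}(B)\cup\{0\}$ and $\tilde u_\lambda^i \neq 0$, each $\tilde u_\lambda^i$ is an extreme point $\half\tilde\sigma_i\indi_{[\tilde a_\lambda^i,\tilde b_\lambda^i]}$, and by part $(a)$ of Proposition \ref{convind}, shrinking $\varepsilon$ if necessary, $B_\varepsilon(u_0^i)$ contains only extreme points with sign $\sigma_i$, so $\tilde\sigma_i = \sigma_i$. Absorbing the factor $\half$ into the coefficient gives $\tilde u_\lambda = \half\sum_{i=1}^n \tilde c_\lambda^i\sigma_i\indi_{[\tilde a_\lambda^i,\tilde b_\lambda^i]}$, and $\langle\tilde\eta_\lambda,\tilde u_\lambda^i\rangle = 1$ becomes $\half\sigma_i\int_{\tilde a_\lambda^i}^{\tilde b_\lambda^i}\tilde\eta_\lambda\,dx = 1$, which is the asserted conclusion.

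The main obstacle is not any single computation but rather making the reduction rigorous: one must be careful that the set $\mathcal{B}$ in this example genuinely contains the extra point $0$ (so $(ii')$ must explicitly rule it out), and that hypothesis $2$ indeed captures $(ii')$ for all signs and all intervals and not merely for the sign $\sigma_i$ near $u_0^i$; the two uses of Proposition \ref{convind} are exactly what resolves this. Everything else is bookkeeping layered on top of Theorem \ref{mainthm}.
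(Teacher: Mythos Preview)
Your proposal is correct and follows exactly the same route as the paper's proof: identify hypotheses $1$--$3$ with the three items $(i')$--$(iii')$ of the MNDSC (invoking Lemma \ref{lemma:BVcondimpliesii} for $(iii')$), then apply Theorem \ref{mainthm}. You are simply more explicit than the paper about why hypothesis $2$ yields $(ii')$ (treating the extra point $0\in\mathcal B$ via Proposition \ref{convind}(b)) and about how the abstract conclusion translates back to signed indicator functions via Proposition \ref{convind}(a); these are welcome clarifications rather than a different argument.
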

  \begin{proof}
  The first assumption is $(i')$.
The second assumption on $\eta_0$ is equivalent to the condition $(ii')$. The third assumption implies $(iii')$, by Lemma \ref{lemma:BVcondimpliesii}.
Thanks to the validity of the MNDSC and the linear independence of  $\{K(\indi_{[a_0^i,b_0^i]})\}_{i=1}^n$, we can now apply the main Theorem \ref{mainthm} to obtain the sought result.
  \end{proof}
  \begin{remark}
    Note that  condition $\half\sigma_i\indi_{[\Tilde{a}_{\lambda}^i,\Tilde{b}_{\lambda}^i]} \in B_{\varepsilon}(\half\sigma_i\indi_{[a_0^i,b_0^i]})$ can be simply rephrased as $|\Tilde{a}_{\lambda}^i-a_0^i| + |\Tilde{b}_{\lambda}^i-b_0^i|\leqslant\varepsilon$ due to Proposition \ref{convind}.
\end{remark}
\begin{remark}\label{rem:TV}
We expect that our general framework applies to variants of the setting we have considered here. For example, in the case of $1$-dimensional BV functions without boundary conditions, one can resort to quotient strategies and identify the space of BV functions with the space of Radon measures through the weak derivative operator \cite{iglesias2022extremal}. In this case, the extreme points of the BV-seminorm ball are step functions and the MNDSC would amount to require the non-degeneracy of the dual certificate on the jump. 
On the contrary, the extension of our framework to higher dimensions is unclear to us. We believe that property $(iii)$ of our MNDSC is linked to stability properties of suitable curvature problems as the ones introduced in \cite[Definition 5.3]{decastro2023exact}. However, we have not explored this connection at the moment.

\end{remark} 
\subsection{Product measures and 1-Wasserstein distance regularizer}
We consider the product space $X=\M(\mathbb T):= M(\mathbb T)\times M(\mathbb T)$ of Radon measures on the one-dimensional torus $\mathbb{T}$
and $Y := L^2(\mathbb T)\times L^2(\mathbb T)$. In particular, an element $u\in \M(\mathbb T)$ can be written as $u=(\mu,\nu)$ for $\mu,\nu\in M(\mathbb T)$. 
Note that 
\begin{itemize}
	\item $\mathcal{M}(\mathbb{T})$ endowed with the  norm
 \begin{align*}
     \|u\|_{\mathcal{M}(\mathbb T)}= \|\mu\|_{M(\mathbb T)}+ \|\nu\|_{M(\mathbb T)}
 \end{align*}
 is a Banach space whose pre-dual is $\calC (\mathbb{T})=C (\mathbb{T})\times C (\mathbb{T})$, that is $\M(\mathbb{T}) \simeq \calC(\mathbb{T})^*=C^* (\mathbb{T})\times C^*(\mathbb{T})$;
  \item The regularizer $G:X\rightarrow [0,+\infty]$ is defined as 
  \begin{equation}\label{GWass}
G(\mu,\nu)= \begin{cases}\overline W_1(\mu,\nu)+ \|\mu\|_{M(\mathbb T)}+ \|\nu\|_{M(\mathbb T)} & \text { if } \mu,\nu\in M^+(\TT), \|\mu\|_{M(\mathbb T)}=\|\nu\|_{M(\mathbb T)},\\ 
+\infty & \text { otherwise},\end{cases}
  \end{equation}
  where $\overline W_1(\mu,\nu) :=c W_1(\frac{\mu}{c}, \frac{\nu}{c})$ when $\|\mu\|_{M(\mathbb T)}=\|\nu\|_{M(\mathbb T)} = c$, with the convention that if $c = 0$, then $\overline W_1(\mu,\nu) = 0$.
  We also recall that, denoting $\omega:=\frac{\mu}{c}$ and $\tau:=\frac{\nu}{c}$ two probability measures, the $1$-Wasserstein distance is defined as
  \begin{align*}
      W_1(\omega, \tau):=\inf \left\{\int_{\TT \times\TT}d_{\TT}(x,y) \mathrm{~d} \gamma(x, y) :\gamma \in \mathcal{T}(\omega, \tau)\right\},
  \end{align*}
    where $d_{\TT} : \mathbb{T} \times \mathbb{T} \rightarrow [0,\infty)$ is the canonical metric on the torus and $\mathcal{T}(\omega, \tau)$  denotes the set of couplings $\gamma\in\M^+(\TT)$ such that $\left(\pi_x\right)_{\#} \gamma=\omega$ and $\left(\pi_y\right)_{\#} \gamma=\tau$, with $\pi_x,\pi_y$ being the projections onto the first and second components.
\end{itemize} 
\begin{remark}
    Let us recall that  the total variation norm $\|\mu\|_{M(\mathbb T)}$ can also be expressed as $|\mu|(\TT)$, which is equal to $\mu(\TT)$ when $\mu$ is a positive measure. Henceforth, we will adopt this notation.
\end{remark}
\begin{Lemma}\label{PropG}
   $G$ is a convex, $1$-positively homogeneous, weak* lower semi-continuous functional and its sublevel sets are weak* compact. Moreover, $(0,0)$ is an interior point of $\partial G(0,0)$.
\end{Lemma}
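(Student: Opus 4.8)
The statement collects four separate properties of the functional $G$ defined in \eqref{GWass}, and I would establish them in the order listed. \emph{Convexity} and \emph{positive $1$-homogeneity} are the easy warm-up. Positive $1$-homogeneity is immediate from the definitions: for $t>0$, scaling $\mu,\nu$ by $t$ scales all three terms $\overline W_1(\mu,\nu)$, $\|\mu\|_{M(\TT)}$, $\|\nu\|_{M(\TT)}$ by $t$ (using $\overline W_1(t\mu,t\nu) = tc\, W_1(\mu/c,\nu/c)$ when $\|\mu\|=\|\nu\|=c$), while the constraints $\mu,\nu\in M^+(\TT)$ and $\|\mu\|_{M(\TT)}=\|\nu\|_{M(\TT)}$ are scale-invariant; the $t=0$ case is covered by the convention $\overline W_1 = 0$. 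Convexity: on the constraint set the map $(\mu,\nu)\mapsto \|\mu\|_{M(\TT)}+\|\nu\|_{M(\TT)}$ is convex (it is $|\mu|(\TT)+|\nu|(\TT)$), and the constraint set $\{(\mu,\nu): \mu,\nu\in M^+(\TT),\ \mu(\TT)=\nu(\TT)\}$ is convex, so it suffices to check that $(\mu,\nu)\mapsto\overline W_1(\mu,\nu)$ is convex on this set. The cleanest route is to use the Kantorovich--Rubinstein dual representation $\overline W_1(\mu,\nu) = \sup\{\int \phi\,d\mu - \int\phi\,d\nu : \phi\in C(\TT),\ \mathrm{Lip}(\phi)\leqslant 1\}$, valid because $\mu,\nu$ have equal mass (so the mass-balance makes the $1$-homogeneous extension of $W_1$ agree with this linear-in-$(\mu,\nu)$ supremum); a supremum of affine functions is convex. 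Alternatively one cites \cite{KRext} / \cite{hanin1992kantorovich} directly, since $G$ is exactly (a reparametrization of) the Kantorovich--Rubinstein norm whose convexity is established there.

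\textbf{Weak* lower semicontinuity and compactness of sublevel sets.} For \emph{weak* lower semicontinuity}: take $(\mu_k,\nu_k)\weakstar(\mu,\nu)$ in $\M(\TT)$ and assume $\liminf_k G(\mu_k,\nu_k)<+\infty$; passing to a subsequence realizing the liminf, we may assume $G(\mu_k,\nu_k)$ bounded, hence $\mu_k,\nu_k\in M^+(\TT)$ with $\mu_k(\TT)=\nu_k(\TT)$. Positivity passes to the weak* limit, and $\mu_k(\TT)=\nu_k(\TT)$ passes to the limit by testing against the constant function $1\in C(\TT)$, so $(\mu,\nu)$ lies in the constraint set. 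Then $\|\mu\|_{M(\TT)}+\|\nu\|_{M(\TT)}\leqslant\liminf_k(\|\mu_k\|_{M(\TT)}+\|\nu_k\|_{M(\TT)})$ by weak* lsc of the total variation norm, and $\overline W_1(\mu,\nu)\leqslant\liminf_k\overline W_1(\mu_k,\nu_k)$ using the Kantorovich--Rubinstein dual form (for each admissible $\phi$, $\int\phi\,d\mu-\int\phi\,d\nu=\lim_k(\int\phi\,d\mu_k-\int\phi\,d\nu_k)\leqslant\liminf_k\overline W_1(\mu_k,\nu_k)$, then take the supremum over $\phi$); adding these gives $G(\mu,\nu)\leqslant\liminf_k G(\mu_k,\nu_k)$. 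For \emph{weak* compactness of sublevel sets}: if $G(\mu,\nu)\leqslant\alpha$ then $\|\mu\|_{M(\TT)}+\|\nu\|_{M(\TT)}\leqslant\alpha$, so $S^-(G,\alpha)$ is contained in a norm-bounded set of $\M(\TT)=\calC(\TT)^*$, hence relatively weak* compact by Banach--Alaoglu; combined with the weak* lsc just proved (which shows $S^-(G,\alpha)$ is weak* closed), the sublevel set is weak* compact.

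\textbf{$(0,0)$ is an interior point of $\partial G(0,0)$.} Since $G$ is positively $1$-homogeneous and $G(0,0)=0$, the subdifferential at the origin is $\partial G(0,0)=\{(\varphi,\psi)\in\calC(\TT): \langle(\varphi,\psi),(\mu,\nu)\rangle\leqslant G(\mu,\nu)\ \forall (\mu,\nu)\in X\}$. I would show there is $r>0$ such that $\|\varphi\|_{C(\TT)}+\|\psi\|_{C(\TT)}\leqslant r$ implies $(\varphi,\psi)\in\partial G(0,0)$. It suffices to test on the constraint set: for $\mu,\nu\in M^+(\TT)$ with $\mu(\TT)=\nu(\TT)=c$, one has $G(\mu,\nu)=\overline W_1(\mu,\nu)+2c\geqslant 2c$, while $\langle(\varphi,\psi),(\mu,\nu)\rangle = \int\varphi\,d\mu+\int\psi\,d\nu\leqslant \|\varphi\|_{C(\TT)}c+\|\psi\|_{C(\TT)}c\leqslant (\|\varphi\|_{C(\TT)}+\|\psi\|_{C(\TT)})c$, so the choice $r=2$ (indeed any $r\leqslant 2$) gives $\langle(\varphi,\psi),(\mu,\nu)\rangle\leqslant 2c\leqslant G(\mu,\nu)$; for $(\mu,\nu)$ outside the constraint set $G=+\infty$ and the inequality is trivial. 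Hence the ball of radius $2$ in $\calC(\TT)$ sits inside $\partial G(0,0)$, so $(0,0)$ is interior.

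\textbf{Main obstacle.} The routine parts (homogeneity, the norm estimate for interiority, Banach--Alaoglu) are immediate; the one place requiring care is justifying the Kantorovich--Rubinstein dual representation of $\overline W_1$ on the mass-balanced cone — in particular that $\overline W_1(\mu,\nu)=cW_1(\mu/c,\nu/c)$ really equals $\sup\{\int\phi\,d(\mu-\nu):\mathrm{Lip}(\phi)\leqslant 1\}$, including the degenerate $c=0$ case — since every one of convexity, weak* lsc, and the comparison $G\geqslant 2c$ leans on it. This is standard (it is the content of \cite{hanin1992kantorovich,KRext}) but should be invoked cleanly; alternatively, all three properties can be quoted wholesale from the analysis of the Kantorovich--Rubinstein norm in \cite{KRext}, of which $G$ is a direct instance.
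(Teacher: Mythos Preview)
Your proposal is correct and follows essentially the same route as the paper's proof: both use the Kantorovich--Rubinstein dual formula \eqref{eq:dualw}/\eqref{dualbarW1} for $\overline W_1$ to obtain convexity and weak* lower semicontinuity as a supremum of weak*-continuous linear functionals, then combine the obvious total-variation bound with Banach--Alaoglu for compactness of sublevel sets, and finally bound $\int\phi\,d\mu+\int\psi\,d\nu$ by $\mu(\TT)+\nu(\TT)\leqslant G(\mu,\nu)$ to show a ball in $\calC(\TT)$ lies in $\partial G(0,0)$. The only cosmetic difference is that the paper takes $\|\phi\|_\infty,\|\psi\|_\infty\leqslant 1$ separately while you take the sum norm $\leqslant 2$; both give interiority.
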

\begin{proof}
    Positive $1$-homogeneity is straightforward, while convexity follows from the dual formulation of the $1$-Wasserstein distance  (see for instance \cite[Sec 3.1.1]{Sant}):
    \begin{align}\label{eq:dualw}
    W_1(\omega,\tau)=\sup \left\{\int_{\TT} \phi(x) \mathrm{d}(\omega-\tau)(x) : \phi \in \operatorname{Lip}_1(\TT)\right\},
\end{align}
where $\operatorname{Lip}_1(\mathbb{T})$ denotes $1$-Lipschitz functions with respect to the canonical metric on the torus.

Now, we prove that $G$ is weak* lower semi-continuous in $\M(\TT)$. Take $(\mu_k)_{k\in \N}$, $(\nu_k)_{k\in \N}$  two sequences weak* converging to some $\mu, \nu  \in M(\TT)$. Without loss of generality, we can assume that 
   \begin{align*}
     \liminf _{k\rightarrow \infty}G(\mu_k,\nu_k)<+\infty.
   \end{align*}
Up to extracting a further subsequence, we can also assume that $G(\mu_k,\nu_k)<+\infty$ for every $k$, that is $G(\mu_k,\nu_k)=\overline W_1(\mu_k,\nu_k)+ \|\mu_k\|_{M(\mathbb T)}+ \|\nu_k\|_{M(\mathbb T)}$, where $\mu_k, \nu_k \in M^+(\TT)$ and $\mu_k(\TT)=\nu_k(\TT)$. 
By weak* convergence, we have that $\mu, \nu \in M^+(\TT)$ and $\mu(\TT)=\nu(\TT)$, thus satisfying the constraint.
The next step is to prove the weak* lower semi-continuity of the sum of the three terms in \eqref{GWass}, which can be expressed as
\begin{align*}
   \liminf _{k\rightarrow \infty} \overline W_1(\mu_k,\nu_k)+ \mu_k(\TT)+ \nu_k(\TT)\geqslant \overline W_1(\mu,\nu)+\mu(\TT)+ \nu(\TT).
\end{align*}
The total variation is weak* lower semi-continuous with respect to the weak* convergence in $M(\TT)$. Therefore, we only need to prove the weak* lower semi-continuity of $\overline W_1$, which is an immediate consequence of the duality formula \eqref{eq:dualw}. This follows from the fact that 
 \begin{align}\label{dualbarW1}
    \overline W_1(\mu,\nu) & = c\sup \left\{\int_{\TT} \phi(x) \mathrm{d}\left(\frac{\mu}{c}-\frac{\nu}{c}\right)(x) : \phi \in \operatorname{Lip}_1(\TT)\right\} \nonumber \\
    & =\sup \left\{\int_{\TT} \phi(x) \mathrm{d}(\mu-\nu)(x) : \phi \in \operatorname{Lip}_1(\TT)\right\}
\end{align}
is the supremum of weak* continuous functions. 
Finally, given that $G(\mu_k,\nu_k)<+\infty$ for all $k$, we obtain:
    \begin{align*}
    \begin{aligned}
G(\mu,\nu)=\overline W_1(\mu,\nu)+\mu(\TT)+ \nu(\TT)&\leqslant \liminf _{k\rightarrow \infty} \overline W_1(\mu_k,\nu_k)+\mu_k(\TT)+ \nu_k(\TT)\\
&=\liminf _{k\rightarrow \infty} G(\mu_k,\nu_k),
\end{aligned}
    \end{align*}
    which gives us the desired result. 
    
Thanks to the weak* lower semi-continuity, we can show that the sublevel set $S^-(G,\alpha)$ in \eqref{sublevel} is weak* compact for every $\alpha \geqslant 0$. Indeed, take a sequence $(\mu_k,\nu_k)_{k\in\NN}\subset S^-(G,\alpha)$, that is
$
    G(\mu_k,\nu_k)\leqslant \alpha.
$
This implies that     $\mu_k(\TT)\leqslant \alpha$ and $\nu_k(\TT)\leqslant \alpha$, 
which allow us to apply the Banach-Alaoglu theorem. Therefore, up to subsequences, $\mu_k\stackrel{*}{\weakarrow} \mu$ and $\nu_k\stackrel{*}{\weakarrow} \nu$ in $M(\TT)$. 
By the weak* lower semi-continuity of $G$, we obtain:
\begin{align*}
    G(\mu,\nu)\leqslant \liminf _{k\rightarrow \infty} G(\mu_k,\nu_k) \leqslant \alpha.
\end{align*}
This holds for every $\alpha\geqslant 0$, establishing that $(\mu,\nu)\in S^-(G,\alpha)$.

Lastly, we need to prove that $(0,0)$ is an interior point of $\partial G(0,0)$.
Since the duality pairing between $u=(\mu,\nu)\in \calM(\TT)$ and $\eta=(\phi,\psi)\in \calC(\TT)$ is given by
\begin{align}\label{dualpairdouble}
 \langle u,\eta\rangle=   \int_{\TT} \phi \mathrm{d}\mu+\int_{\TT} \psi \mathrm{d}\nu,
\end{align}
the subdifferential of $G$ in zero becomes:
\begin{align*}
    \begin{aligned}
\partial G(0,0) & =\left\{(\phi,\psi) \in \calC(\TT): \int_{\TT} \phi \mathrm{d}\mu+\int_{\TT} \psi \mathrm{d}\nu \leqslant G(\mu, \nu) \quad \forall (\mu,\nu) \in \M(\TT)\right\} \\
& =\bigg\{(\phi,\psi) \in \calC(\TT): \int_{\TT} \phi \mathrm{d}\mu+\int_{\TT}\psi \mathrm{d}\nu\leqslant \overline W_1(\mu,\nu)+\mu(\TT)+ \nu(\TT),\\
&\qquad \qquad \qquad \qquad \qquad \qquad \qquad  \qquad \forall \mu,\nu \in M^+(\TT), \mu(\TT)=\nu(\TT)\bigg\}.
\end{aligned}
\end{align*}
If we consider $(\phi,\psi) \in \calC(\TT)$ such that $\|\phi\|_{\infty} \leqslant  1$ and $\|\psi\|_{\infty} \leqslant  1$, we obtain:
\begin{align*}
    \int_{\TT} \phi \mathrm{d}\mu+\int_{\TT}\psi \mathrm{d}\nu \leqslant \mu(\TT)+ \nu(\TT) \leqslant \overline W_1(\mu,\nu)+ \mu(\TT)+\nu(\TT).
\end{align*}
This implies that every $(\phi,\psi) \in \calC(\TT)$ whose norm is less than one belongs to the subdifferential of $G$ at the point $(0,0)$. In particular, $(0,0)$ is an interior point of $\partial G(0,0)$.
\end{proof}

As a linear operator, we consider $K:\M(\mathbb{T}) \rightarrow Y$, which is the vector-valued version of the one introduced in Section \ref{SecRadon}, defined as
 \begin{equation}\label{Kmunu}
K(\mu,\nu): s \mapsto \left(\int_{\mathbb{T}} k_{1}(x-s) \mathrm{d} \mu(x),\int_{\mathbb{T}} k_{2}(x-s) \mathrm{d} \nu(x)\right)\quad \forall (\mu,\nu)\in \M(\mathbb{T}).
 \end{equation}
Here, the convolutional kernels $k_{1}$ and $k_{2}$   are in $C^2(\mathbb{T})$.
Applying Proposition \ref{Kstrong} for each component of 
$K$, we can conclude that $K$ is weak*-to-strong continuous. In particular, it is also weak*-to-weak continuous, and thus, it satisfies Assumption \ref{eq:assK}.

Note that $K_*: Y \rightarrow \calC(\mathbb{T})$, for $K$ defined as in \eqref{Kmunu}, can be characterized as
\begin{equation}
    K_*(y,z): s \mapsto \left(\int_\mathbb{T} k_{1}(s-x) y(x) \mathrm{d} x,\int_\mathbb{T} k_{2}(s-x) z(x) \mathrm{d} x\right) \quad \forall (y,z)\in Y.
\end{equation}

Once again our main goal is to apply Theorem \ref{mainthm}. To achieve this, we need to reformulate the MNDSC as in Definition \ref{MNDSC} for this specific scenario. Therefore, we need to characterize the extreme points of the set $B=\{(\mu,\nu)\in \calM(\TT): G(\mu,\nu)\leqslant 1\}$. The main tool for characterizing these extreme points is an adapted version of \cite[Proposition 2.8]{KRext}, using \cite[Lemma 3.2]{bc}. This characterization is the content of the following theorem.

\begin{theorem}
   The extreme points of 
   \begin{align*}
       B=\left\{(\mu,\nu)\in M^+(\TT) \times  M^+(\TT): \overline W_1(\mu,\nu)+ \mu(\TT)+ \nu(\TT)\leqslant 1 , \mu(\TT)=\nu(\TT)\right\} 
   \end{align*}
   are the pair $(0,0)$ and the pairs of rescaled Dirac deltas $\left(\frac{\delta_x}{2+d_{\TT}(x,\bar x)},\frac{\delta_{\bar x}}{2+d_{\TT}(x,\bar x)}\right)$, where $(x, \bar x) \in \TT \times \TT$.
\end{theorem}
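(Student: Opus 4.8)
The plan is to characterize $\Ext(B)$ via the general principle from \cite[Lemma 3.2]{bc}: an element $u$ of a convex weak* compact set $B$ is an extreme point if and only if there is no nonzero symmetric ``direction'' along which one can perturb inside $B$, and to reduce the problem to the known characterization of extreme points of the $1$-Wasserstein ball from \cite[Proposition 2.8]{KRext}. First I would dispose of the trivial pair: $(0,0)$ is an extreme point because $G(\mu,\nu)=0$ forces $\mu=\nu=0$ (the three nonnegative summands in \eqref{GWass} must each vanish), so $(0,0)$ cannot be written as a nontrivial convex combination of elements of $B$ — any such combination would have at least one component with positive mass.

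Next, for the nontrivial extreme points, I would first argue that any extreme point $(\mu,\nu)\neq (0,0)$ must saturate the constraint, i.e. $G(\mu,\nu)=1$; otherwise, writing $(\mu,\nu)=(1-t)(0,0)+t\,(\mu,\nu)/t$ with $t=G(\mu,\nu)\in(0,1)$ exhibits it as a proper convex combination of two distinct points of $B$, contradicting extremality (here I use positive $1$-homogeneity of $G$, Lemma \ref{PropG}). So write $c:=\mu(\TT)=\nu(\TT)>0$ and set $\omega=\mu/c$, $\tau=\nu/c$, probability measures with $c(2+W_1(\omega,\tau))=1$. The key structural step is to show that $(\omega,\tau)$ must be a pair of Dirac deltas $(\delta_x,\delta_{\bar x})$. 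For this I would invoke (the adapted version of) \cite[Proposition 2.8]{KRext}, which characterizes extreme points associated with the Wasserstein/Kantorovich–Rubinstein functional: if $(\omega,\tau)$ is not a pair of single Diracs, then one can split it — e.g. if $\omega$ (or $\tau$) charges two disjoint Borel sets, decompose $\omega=\omega_1+\omega_2$, $\tau=\tau_1+\tau_2$ compatibly with an optimal plan so that $\overline W_1(\mu,\nu)=\overline W_1(\mu_1,\nu_1)+\overline W_1(\mu_2,\nu_2)$ and the masses also add, producing a nontrivial convex decomposition of $(\mu,\nu)$ inside $B$ after renormalizing each piece to lie on the boundary. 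Conversely, for $(\mu,\nu)=\big(\tfrac{\delta_x}{2+d_\TT(x,\bar x)},\tfrac{\delta_{\bar x}}{2+d_\TT(x,\bar x)}\big)$, I would verify $G(\mu,\nu)=1$ directly (the optimal transport between two Diracs is $d_\TT(x,\bar x)$ times the common mass) and then show extremality: any representation $(\mu,\nu)=(1-s)(\mu_1,\nu_1)+s(\mu_2,\nu_2)$ with $(\mu_i,\nu_i)\in B$ forces $\mu_1,\mu_2$ to be supported at $x$ and $\nu_1,\nu_2$ at $\bar x$ (supports of a convex combination of nonnegative measures are contained in the support of the sum), hence $\mu_i=a_i\delta_x$, $\nu_i=b_i\delta_{\bar x}$; the constraint $\mu_i(\TT)=\nu_i(\TT)$ gives $a_i=b_i$, and $G(\mu_i,\nu_i)\le 1$ reads $a_i(2+d_\TT(x,\bar x))\le1$, i.e. $a_i\le \tfrac{1}{2+d_\TT(x,\bar x)}$; since the convex combination of the $a_i$ equals $\tfrac{1}{2+d_\TT(x,\bar x)}$, both $a_i$ must equal this value, so $(\mu_i,\nu_i)=(\mu,\nu)$.

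The main obstacle I anticipate is the splitting argument in the forward direction — making precise that a non-Dirac $(\omega,\tau)$ admits a mass decomposition along an optimal coupling that is additive for $\overline W_1$, so that the pieces genuinely lie on the boundary of $B$ after rescaling and are distinct. This is exactly where the cited \cite[Proposition 2.8]{KRext} and \cite[Lemma 3.2]{bc} do the heavy lifting, and I would lean on them rather than redo the transport-theoretic decomposition; the subtlety is checking that the normalization constants $2+W_1$ combine correctly so that a convex combination of the normalized pieces reconstructs $(\mu,\nu)$ with the right total weight. A secondary point to handle carefully is the case where $\omega$ is a single Dirac but $\tau$ is not (or vice versa): then the same splitting applied to $\tau$ alone, keeping $\omega=\delta_x$ fixed and distributing its mass proportionally, again yields a nontrivial decomposition, so indeed both marginals must be single Diracs. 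Once these reductions are in place, the remaining verifications are the routine computations indicated above.
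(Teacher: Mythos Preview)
Your proposal is correct. Both your argument and the paper's rely on the same external inputs (\cite[Proposition 2.8]{KRext} and \cite[Lemma 3.2]{bc}), but the organization of the forward direction differs in a meaningful way.

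The paper does not split along an optimal transport plan. Instead it introduces the injective linear map $L:\mu\mapsto(\mu^+,\mu^-)$, observes that $L\bar B=B\cap\{(\mu,\nu):\mu\perp\nu\}$ where $\bar B$ is the KR-ball of signed measures from \cite[Proposition 2.8]{KRext}, and applies \cite[Lemma 3.2]{bc} to transfer the known characterization $\Ext(\bar B)=\{\text{rescaled dipoles}\}$ to $\Ext(B\cap\{\mu\perp\nu\})$. It then runs a trichotomy on an arbitrary $(\mu,\nu)\in\Ext(B)$: if $\mu\perp\nu$, extremality in $B$ passes to the intersection and the transferred result applies; if $\mu,\nu$ are not mutually singular and $\mu\neq\nu$, the paper picks a set $A$ with $\mu(A)\ge\nu(A)>0$ and writes an explicit two-term convex decomposition by peeling off $\nu\res A$ from both coordinates, contradicting extremality; if $\mu=\nu$, it reduces to TV-extremality of Diracs. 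Your route---splitting the optimal coupling whenever one marginal is not a Dirac and renormalizing the pieces to $\partial B$---is more uniform and bypasses this case analysis; the cost is precisely the check you flag, that $\overline W_1$ is additive under such a split, which follows by combining subadditivity (convexity plus $1$-homogeneity) with the upper bound coming from restricting the optimal plan.

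For the converse (showing the candidates are extreme), your support-and-mass argument is cleaner than the paper's: the paper again separates $x=\bar x$ (TV-extremality of $\delta_x/2$) from $x\neq\bar x$ (any convex decomposition has mutually singular components, so appeal to $\Ext(B\cap\{\mu\perp\nu\})$), whereas your argument handles both cases uniformly.
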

\begin{proof}
    In \cite[Proposition 2.8]{KRext} the authors showed that the extreme points of
    \begin{align*}
       \Bar{B}=\left\{\mu\in M(\TT): \overline W_1(\mu^+,\mu^-)+ \mu^+(\mathbb T)+ \mu^-(\mathbb T)\leqslant 1 , \mu^+(\TT)=\mu^-(\TT)\right\} 
   \end{align*}
   are the rescaled dipoles
   $
       \frac{\delta_x-\delta_{\bar x}}{2+d_{\TT}(x,\bar x)},
   $
   where $(x, \bar x) \in \TT \times \TT$, $x \neq \bar x$.
Consider the linear map 
 \begin{align*}
     \begin{aligned}
L: M(\TT) & \rightarrow M^{+}(\TT)\times  M^{+}(\TT) \\
\mu & \mapsto(\mu^{+}, \mu^{-}),
\end{aligned}
 \end{align*}
 where $(\mu^{+}, \mu^{-})$ is the Jordan decomposition of $\mu$. Note that $L$ is injective. Indeed, given $\mu_1,\mu_2\in M(\TT)$ with their respective Jordan decompositions $(\mu_1^{+}, \mu_1^{-})$ and $(\mu_2^{+}, \mu_2^{-})$, we have that if $(\mu_1^{+}, \mu_1^{-})=(\mu_2^{+}, \mu_2^{-})$, then 
 \begin{align*}
     \mu_1=\mu_1^{+}-\mu_1^{-}=\mu_2^{+}-\mu_2^{-}=\mu_2.
 \end{align*}
Therefore, \cite[Lemma 3.2]{bc} yields
 \begin{align}\label{ExtLB}
     \operatorname{Ext}(L \Bar{B})=L \operatorname{Ext}(\Bar{B}),
 \end{align}
  where $L\bar B = B \cap \{(\mu,\nu) \in \mathcal{M}(\mathbb{T}) : \mu \perp \nu\}$.
 Since
 \begin{align*}
     L\left(\frac{\delta_x-\delta_{\bar x}}{2+d_{\TT}(x,\bar x)}\right)= \left( \frac{\delta_x}{2+d_{\TT}(x,\bar x)}, \frac{\delta_{\bar x}}{2+d_{\TT}(x,\bar x)}\right),
 \end{align*}
by \eqref{ExtLB}, we have:
 \begin{equation}\label{eq:prel}
    \operatorname{Ext}(B \cap \{(\mu,\nu) \in \mathcal{M}(\mathbb{T}) : \mu \perp \nu\}) 
    =\left\{ \left(\frac{\delta_x}{2+d_{\TT}(x,\bar x)}, \frac{\delta_{\bar x}}{2+d_{\TT}(x,\bar x)}\right): x,\bar x\in \TT, x\neq \bar x\right\}.
 \end{equation}
We want to prove that
 \begin{equation}
    \operatorname{Ext}(B) = \left\{ \left(\frac{\delta_x}{2+d_{\TT}(x,\bar x)}, \frac{\delta_{\bar x}}{2+d_{\TT}(x,\bar x)}\right): x,\bar x\in \TT\right\} \cup \{(0,0)\}.
 \end{equation}
 We first show the inclusion: 
  \begin{align}
    \left\{ \left(\frac{\delta_x}{2+d_{\TT}(x,\bar x)}, \frac{\delta_{\bar x}}{2+d_{\TT}(x,\bar x)}\right): x,\bar x\in \TT\right\} \cup \{(0,0)\}\subset  \operatorname{Ext}(B).
 \end{align}
Since we are considering positive measures, $(0,0)$ is straightforwardly an extreme point of $B$. Now, let us show that  $\left(\frac{\delta_x}{2+d_{\TT}(x,\bar x)}, \frac{\delta_{\bar x}}{2+d_{\TT}(x,\bar x)}\right)$ is an extreme point of $B$ for every $x,\bar x \in \mathbb{T}$. If $x = \bar x$, then we have $\left(\frac{\delta_x}{2}, \frac{\delta_{x}}{2}\right)$ and the proof follows directly from the extremality of the Dirac deltas for the total variation. If $x\neq \bar x$, given two convex decompositions: 
\begin{align}
   \frac{\delta_x}{2+d_{\TT}(x,\bar x)} = \lambda \mu_1 + (1-\lambda) \mu_2 \quad \text{and}\quad \frac{\delta_{\bar x}}{2+d_{\TT}(x,\bar x)} = \lambda \nu_1 + (1-\lambda) \nu_2,
\end{align}
where $(\mu_1,\nu_1), (\mu_2,\nu_2) \in B$ and $\lambda \in (0,1)$, we note that $\mu_1 \perp \nu_1$ and $\mu_2 \perp \nu_2$, because they are supported in two different points. Therefore, from \eqref{eq:prel}, we conclude that $\frac{\delta_x}{2+d_{\TT}(x,\bar x)} = \mu_1 = \mu_2$ and $\frac{\delta_{\bar x}}{2+d_{\TT}(x,\bar x)} = \nu_1 = \nu_2$, deducing the extremality of   $\left(\frac{\delta_x}{2+d_{\TT}(x,\bar x)}, \frac{\delta_{\bar x}}{2+d_{\TT}(x,\bar x)}\right)$.

We now show the inclusion:
  \begin{align}
\operatorname{Ext}(B) \subset    \left\{ \left(\frac{\delta_x}{2+d_{\TT}(x,\bar x)}, \frac{\delta_{\bar x}}{2+d_{\TT}(x,\bar x)}\right): x,\bar x\in \TT\right\} \cup \{(0,0)\}.
 \end{align}
In the following, we will consider all the possible relations between $\mu$ and $\nu$, when  $(\mu,\nu) \in \operatorname{Ext}(B)$.

Let $(\mu,\nu) \in B$ be an extreme point of $B$ such that $\mu\perp \nu$. We claim that $(\mu,\nu)$ is also an extreme point of  $B \cap \{(\mu,\nu) \in \mathcal{M}(\mathbb{T}) : \mu \perp \nu\}$. 
Indeed, given two convex combinations:
\begin{align}
    \mu = \lambda \mu_1 + (1-\lambda) \mu_2 \quad \text{and}\quad \nu = \lambda \nu_1 + (1-\lambda) \nu_2,
\end{align}
where $(\mu_1,\nu_1), (\mu_2,\nu_2) \in B$, $\lambda \in (0,1)$ and $\mu \perp \nu$, we deduce from the extremality of $(\mu,\nu)$ in $B$ that $\mu = \mu_1 = \mu_2$ and $\nu = \nu_1 = \nu_2$.
This implies the extremality of $(\mu,\nu)$ in $B \cap \{(\mu,\nu) \in \mathcal{M}(\mathbb{T}) : \mu \perp \nu\}$, and therefore, by \eqref{eq:prel}, $(\mu,\nu) = \left(\frac{\delta_x}{2+d_{\TT}(x,\bar x)}, \frac{\delta_{\bar x}}{2+d_{\TT}(x,\bar x)}\right)$ for some $x \neq \bar x$. 

Suppose now that $(\mu,\nu) \in B$ is an extreme point of $B$ with $\mu$ and $\nu$  not mutually singular and $\mu \neq \nu$. We will prove that such $(\mu,\nu)$ does not exist.
Since $\mu$ and $\nu$ are not mutually singular, there exists a measurable set $A \subset \mathbb{T}$ such that $\mu(A)\geqslant  \nu(A) >0$.
Define $\eta_A = \nu(A^c) + \mu(\mathbb{T}) - \nu(A) + \overline W_1(\mu,\nu)$, and note that $\eta_A\neq 0$, because $\mu \neq \nu$. Then, consider the following convex decompositions:
\begin{align}
    \mu & = 2\nu(A)\frac{ \nu\res A}{2\nu(A)} + \eta_A\frac{(\mu - \nu\res A)}{\eta_A},\\
    \nu & = 2\nu(A) \frac{ \nu\res A}{2\nu(A)} + \eta_A\frac{\nu\res A^c}{\eta_A},
\end{align}
where $\eta_A + 2\nu(A) = G(\mu,\nu)=1$,  because $(\mu,\nu)$ is an extreme point of $B$ different from $(0,0)$ and $G$ is positively 1-homogeneous. 
Indeed, suppose by contradiction that $G(\mu,\nu)<1$. Since $G(\mu,\nu)>0$ we can write the following convex combination:
\begin{align}
    (\mu,\nu) = (1 - G(\mu,\nu))(0,0) + G(\mu,\nu)\frac{(\mu,\nu)}{G(\mu,\nu)},
\end{align}
where $(0,0)\in B$ and, thanks to the $1$-positive homogeneity of $G$, also $\frac{ (\mu,\nu)}{G\left(\mu, \nu\right)}\in B$. We deduce, from the extremality of $(\mu,\nu)$ in $B$, that
$(\mu,\nu)=(0,0)$, which immediately leads to a contradiction. Therefore, the only possibility is $G(\mu,\nu)=1$. 

Note also that $(\mu_1,\nu_1) := \left(\frac{ \nu\res A}{2\nu(A)}, \frac{ \nu\res A}{2\nu(A)}\right)$ and $(\mu_2,\nu_2) := \left(\frac{(\mu - \nu\res A)}{\eta_A}, \frac{ \nu\res A^c}{\eta_A}\right)$ are in $B$. Indeed, we have:
\begin{align*}
\begin{aligned} G(\mu_1,\nu_1)&= \overline W_1\left( \frac{ \nu\res A}{2\nu(A)}, \frac{ \nu\res A}{2\nu(A)}\right)+\frac{ \nu(A)}{2\nu(A)}+\frac{ \nu(A)}{2\nu(A)}=1,\\ G(\mu_2, \nu_2)&= \frac{\mu(\mathbb{T}) - \nu(A) + \nu(A^c)}{\eta_A}+\overline W_1\left(\frac{\mu - \nu\res A}{\eta_A},\frac{\nu\res A^c}{\eta_A}\right)\\
&=\frac{\mu(\mathbb{T}) - \nu(A) + \nu(A^c)}{\eta_A}+\frac{\mu(\TT)-\nu(A)}{\eta_A}W_1\left(\frac{\mu - \nu\res A}{\mu(\TT) - \nu(A)},\frac{\nu\res A^c}{\nu(A^c)}\right)\\
&=\frac{\mu(\mathbb{T}) - \nu(A) + \nu(A^c) + (\mu(\TT)-\nu(A))W_1\left(\frac{\mu - \nu\res A}{\mu(\TT) - \nu(A)},\frac{\nu\res A^c}{\nu(A^c)}\right)}{\nu(A^c) + \mu(\mathbb{T}) - \nu(A) +\overline W_1(\mu,\nu)} \\
&=\frac{c(\mu,\nu)+\overline W_1(\mu - \nu\res A,\nu\res A^c)}{c(\mu,\nu)+\overline W_1(\mu,\nu)} \leqslant 1,
\end{aligned}
\end{align*}
where $c(\mu,\nu):=\nu(A^c) + \mu(\mathbb{T}) - \nu(A).$ 
Again, thanks to the extremality of $(\mu,\nu)$ in $B$, we obtain $\mu= \frac{ \nu\res A}{2\nu(A)}=\nu$, which contradicts $\mu \neq \nu$.

It remains to consider the case where $(\mu,\nu) \in \operatorname{Ext}(B)$ and $\mu = \nu$. Following the same argument used to prove the extremality of the Dirac deltas for total variations (see for example \cite[Proposition 4.1]{bc}), we can straightforwardly deduce that either $\mu = \nu = 0$ or $\mu = \nu = \frac{\delta_x}{2}$ for $x\in \mathbb{T}$, where $\bar x=x$.

\end{proof}

\begin{proposition}\label{convdouble}
     Consider two sequences $(x_k)_{k\in\NN}$, $(\bar x_k)_{k\in\NN}$. Then, $x_k\rightarrow x$, $\bar x_k\rightarrow \bar x$  if and only if $\left( \frac{\delta_{x_k}}{2+d_{\TT}(x_k,\bar x_k)}, \frac{\delta_{\bar x_k}}{2+d_{\TT}(x_k,\bar x_k)}\right)\stackrel{*}{\weakarrow} \left( \frac{\delta_x}{2+d_{\TT}(x,\bar x)}, \frac{\delta_{\bar x}}{2+d_{\TT}(x,\bar x)}\right)$. Moreover, it holds that $\operatorname{Ext}(B)=\calB$.
\end{proposition}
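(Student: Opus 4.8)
The statement is the analogue of Proposition~\ref{posnegdeltas} and Proposition~\ref{convind} in the product-measure setting, so the plan is to follow the same two-step template: first prove the equivalence between Euclidean convergence of the pairs of locations $(x_k,\bar x_k)$ and weak$^*$ convergence of the associated pairs of rescaled Dirac deltas, and then use this to deduce $\operatorname{Ext}(B) = \calB = \overline{\operatorname{Ext}(B)}^*$. Throughout I will write $r(x,\bar x) := 2 + d_{\TT}(x,\bar x)$ for the (strictly positive, continuous, bounded between $2$ and $2+\operatorname{diam}\TT$) rescaling factor, and abbreviate the pair as $E(x,\bar x) := \bigl(\tfrac{\delta_x}{r(x,\bar x)}, \tfrac{\delta_{\bar x}}{r(x,\bar x)}\bigr) \in \mathcal M(\TT)$.

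For the forward implication, assume $x_k \to x$ and $\bar x_k \to \bar x$. Since $d_{\TT}$ is continuous on $\TT\times\TT$, we get $r(x_k,\bar x_k) \to r(x,\bar x) > 0$. For any test pair $(\phi,\psi) \in \calC(\TT) = C(\TT)\times C(\TT)$, the duality pairing \eqref{dualpairdouble} gives
\begin{align*}
\langle E(x_k,\bar x_k),(\phi,\psi)\rangle = \frac{\phi(x_k)}{r(x_k,\bar x_k)} + \frac{\psi(\bar x_k)}{r(x_k,\bar x_k)} \longrightarrow \frac{\phi(x)}{r(x,\bar x)} + \frac{\psi(\bar x)}{r(x,\bar x)} = \langle E(x,\bar x),(\phi,\psi)\rangle,
\end{align*}
using continuity of $\phi,\psi$ and of $r$. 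This is exactly weak$^*$ convergence $E(x_k,\bar x_k)\weakstar E(x,\bar x)$. For the converse, assume $E(x_k,\bar x_k)\weakstar E(x,\bar x)$. By compactness of $\TT\times\TT$, up to a subsequence $x_k\to\tilde x$ and $\bar x_k\to\tilde{\bar x}$; by the forward implication just proved, $E(x_k,\bar x_k)\weakstar E(\tilde x,\tilde{\bar x})$, and uniqueness of the weak$^*$ limit forces $E(\tilde x,\tilde{\bar x}) = E(x,\bar x)$. Evaluating the first component against $\phi\equiv 1$ gives $1/r(\tilde x,\tilde{\bar x}) = 1/r(x,\bar x)$, i.e. $d_{\TT}(\tilde x,\tilde{\bar x}) = d_{\TT}(x,\bar x)$; then $\delta_{\tilde x}/r(\tilde x,\tilde{\bar x}) = \delta_x/r(x,\bar x)$ together with equal masses forces $\delta_{\tilde x} = \delta_x$, hence $\tilde x = x$, and similarly $\tilde{\bar x} = \bar x$. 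Since every subsequence has a further subsequence converging to $(x,\bar x)$, the whole sequence converges: $x_k\to x$, $\bar x_k\to\bar x$.

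For the last claim $\operatorname{Ext}(B) = \calB$, recall $\calB = \overline{\operatorname{Ext}(B)}^*$, so it suffices to show $\operatorname{Ext}(B)$ is weak$^*$ closed. Take a sequence $(\mu_k,\nu_k)_{k\in\NN}\subset\operatorname{Ext}(B)$ with $(\mu_k,\nu_k)\weakstar(\mu,\nu)$ for some $(\mu,\nu)\in B$ (it is enough to argue with sequences, since $\calB$ is metrizable). By the extreme-point characterization theorem just proved, each $(\mu_k,\nu_k)$ is either $(0,0)$ or of the form $E(x_k,\bar x_k)$. If infinitely many are $(0,0)$, a subsequence converges to $(0,0)$, forcing $(\mu,\nu)=(0,0)\in\operatorname{Ext}(B)$. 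Otherwise, discarding finitely many terms, $(\mu_k,\nu_k) = E(x_k,\bar x_k)$ for all $k$; by compactness of $\TT\times\TT$, a subsequence satisfies $x_k\to x$, $\bar x_k\to\bar x$, and by the first part $E(x_k,\bar x_k)\weakstar E(x,\bar x)$, so $(\mu,\nu) = E(x,\bar x)\in\operatorname{Ext}(B)$. In all cases $(\mu,\nu)\in\operatorname{Ext}(B)$, hence $\operatorname{Ext}(B)$ is weak$^*$ (sequentially) closed, and therefore $\calB = \overline{\operatorname{Ext}(B)}^* = \operatorname{Ext}(B)$.

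I do not anticipate a genuine obstacle here: the only mild subtlety is handling the case $x = \bar x$ (where $r = 2$ and the two Dirac deltas sit at the same point) uniformly with the case $x\neq\bar x$, but since $r(x,\bar x)$ is continuous and bounded away from $0$ on all of $\TT\times\TT$, no case distinction is actually needed in the limiting arguments — the rescaling factor is harmless throughout. The extreme-point characterization theorem (stated immediately before this proposition) is what guarantees that a weak$^*$-convergent sequence of extreme points consists of elements of the explicit parametrized family plus $(0,0)$, which is the one external input the argument relies on.
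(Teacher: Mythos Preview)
Your proof is correct and follows essentially the same two-step template as the paper: establishing the equivalence via continuity of the duality pairing and then deducing weak$^*$ closedness of $\operatorname{Ext}(B)$ by a compactness/subsequence argument. The only minor difference is in the converse direction, where the paper tests the original sequence with $\varphi=1$ to get convergence of the denominator and then with $\varphi=\mathrm{Id}$ to conclude $x_k\to x$, whereas you extract a convergent subsequence and identify the limit via uniqueness of weak$^*$ limits; your route is arguably cleaner and avoids invoking an identity map on $\TT$.
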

\begin{proof}
    Let $x_k\rightarrow x$, $\bar x_k\rightarrow \bar x$. Then, we observe that
    \begin{align}\label{l1double}
        \left( \frac{\varphi(x_k)}{2+d_{\TT}(x_k,\bar x_k)}, \frac{\varphi(\bar x_k)}{2+d_{\TT}(x_k,\bar x_k)}\right)\rightarrow \left( \frac{\varphi(x)}{2+d_{\TT}(x,\bar x)}, \frac{\varphi(\bar x)}{2+d_{\TT}(x,\bar x)}\right)\quad \forall \varphi\in C(\TT).
    \end{align}
    This is equivalent to 
    \begin{align}\label{l2double}
        \left( \frac{\delta_{x_k}}{2+d_{\TT}(x_k,\bar x_k)}, \frac{\delta_{\bar x_k}}{2+d_{\TT}(x_k,\bar x_k)}\right)\stackrel{*}{\weakarrow} \left( \frac{\delta_x}{2+d_{\TT}(x,\bar x)}, \frac{\delta_{\bar x}}{2+d_{\TT}(x,\bar x)}\right).
    \end{align}
    Viceversa, if \eqref{l2double} holds, then, by using \eqref{l1double} and choosing $\varphi=1$, we get $d_{\TT}(x_k , \bar x_k)\rightarrow d_{\TT}(x, \bar x)$. Therefore, from \eqref{l1double}, we deduce that $\varphi(x_k) \rightarrow \varphi(x)$ and $\varphi(\bar x_k) \rightarrow \varphi(\bar x)$ for all $\varphi \in C(\mathbb{T})$. Finally, if we select $\varphi=Id$,  we obtain that $x_k \rightarrow x$ and $\bar x_k \rightarrow \bar x$.

    It remains to prove that $\mathcal{B} = \operatorname{Ext}(B)$. We immediately have the inclusion $\operatorname{Ext}(B)\subset \calB$. Therefore, we just need to prove that $\calB\subset \operatorname{Ext}(B)$.
    Consider $(\mu,\nu) \in \mathcal{B}$, which is the weak* limit of a sequence $(\mu_k,\nu_k)$ of extreme points. If there exists a subsequence such that $\mu_k$ and $\nu_k$ are both zero on that subsequence, then $(\mu,\nu) = (0,0) \in \operatorname{Ext}(B)$.
    Otherwise, we can assume, without loss of generality, that 
    $(\mu_k,\nu_k)= \left( \frac{\delta_{x_k}}{2+d_{\TT}(x_k,\bar x_k)}, \frac{\delta_{\bar x_k}}{2+d_{\TT}(x_k,\bar x_k)}\right)$  for all $k$. By compactness, $x_k \rightarrow x$ and $\bar x_k \rightarrow \bar x$, up to subsequences. Using the first part of the proof and the uniqueness of the weak* limit, we deduce that $(\mu,\nu) = \left( \frac{\delta_{x}}{2+d_{\TT}(x,\bar x)}, \frac{\delta_{\bar x}}{2+d_{\TT}(x,\bar x)}\right)\in \operatorname{Ext}(B)$, as we wanted to prove.   
\end{proof}
\begin{remark}
In this remark, we want to compare the metric induced by the weak* distance of extreme points with the Hellinger-Kantorovich distance.
Following \cite{Liero_2016}, we know that the Hellinger-Kantorovich distance between two rescaled Dirac deltas is
\begin{align}
    \text{\textbf{HK}}(a_0\delta_x,a_1\delta_{y})^2=\left\{\begin{array}{cc}
a_0+a_1-2 \sqrt{a_0 a_1} \cos (d_{\TT}(y,x)) & \text { for }d_{\TT}(y,x) \leqslant \pi, \\
a_0+a_1+2 \sqrt{a_0 a_1} & \text { for }d_{\TT}(y,x) \geqslant \pi,
\end{array}\right.
\end{align}
where $x,y\in \TT$ and $a_0,a_1\geqslant 0$. 
Consider two sequences $(x_k)_{k\in\NN}$, $(\bar x_k)_{k\in\NN}$ and denote $a_k:= \frac{1}{2+d_{\TT}(x_k,\bar x_k)}$, $a:= \frac{1}{2+d_{\TT}(x,\bar x)}$. If we let $x_k\rightarrow x$, $\bar x_k\rightarrow \bar x$, then $a_k\rightarrow a$, and both $d_{\TT}(x_k,x)$ and $d_{\TT}(\bar x_k,\bar x)$ are less than $\pi$. Therefore, the following holds: 
\begin{equation}\label{HKconv}
     \text{\textbf{HK}}(a_k\delta_{x_k},a\delta_{x})^2+ \text{\textbf{HK}}(a_k\delta_{\bar x_k},a\delta_{\bar x})^2=2a_k+2a-2 \sqrt{a_k a} \cos \left(d_{\TT}(x_k,x)\right)+ \cos (d_{\TT}(\bar x_k,\bar x))\rightarrow 0
\end{equation}
as $k \rightarrow +\infty$. Viceversa, suppose $\lim_{k\rightarrow +\infty}\text{\textbf{HK}}(a_k\delta_{x_k},a\delta_{x})^2+ \text{\textbf{HK}}(a_k\delta_{\bar x_k},a\delta_{\bar x})^2= 0$. Since $a_k, a>0$, it becomes evident that the only possible case to consider is when $d_{\TT}(x_k,x),d_{\TT}(\bar x_k,\bar x)\leqslant \pi$. In this case, we can observe that
\begin{align}
    2a_k+2a-2 \sqrt{a_k a} (\cos (d_{\TT}(x_k,x))+ \cos (d_{\TT}(\bar x_k,\bar x)))\geqslant  2a_k+2a-4 \sqrt{a_k a}=2(\sqrt{a_k}-\sqrt{a})^2\geqslant 0,
\end{align}
which implies $\sqrt{a_k}-\sqrt{a}\rightarrow 0$, i.e. 
$a_k\rightarrow a$. By compactness, we can assume, without loss of generality, that $x_k \rightarrow x_0$ and $\bar x_k \rightarrow \bar x_0$ up to subsequence.  
Thanks to \eqref{HKconv}, we obtain that
\begin{equation}
    4a-2a(\cos(d_{\TT}(x_0,x)) + \cos (d_{\TT}(\bar x_0,\bar x)))=0,
\end{equation}
which implies $x_0=x$ and $\bar x_0= \bar x$. By Lemma \ref{convdouble}, the previous computation establishes an equivalence between $d_{\calB}\left(\left( \frac{\delta_x}{2+d_{\TT}(x,\bar x)}, \frac{\delta_{\bar x}}{2+d_{\TT}(x,\bar x)}\right),\left( \frac{\delta_y}{2+d_{\TT}(y,\bar y)}, \frac{\delta_{\bar y}}{2+d_{\TT}(y,\bar y )}\right)\right)$ and $(\text{\textbf{HK}}(a_0\delta_{x},a_1\delta_{y})^2+ \text{\textbf{HK}}(a_0\delta_{\bar x},a_1\delta_{\bar y})^2)^{1/2}$, where $a_0:= \frac{1}{2+d_{\TT}(x,\bar x)}$ and $a_1:= \frac{1}{2+d_{\TT}(y,\bar y)}$, both of which metrize the weak* convergence.
\end{remark}

In this case, we can rewrite the MNDSC as follows, where from now on we denote $D_{\mathbf{x}}:=\left( \frac{\delta_x}{2+d_{\TT}(x,\bar x)}, \frac{\delta_{\bar x}}{2+d_{\TT}(x,\bar x)}\right)$ and $\mathbf{x}:=(x,\bar x)$. 
Let $u_0=(\mu_0,\nu_0)=\sum_{i=1}^{n}c_0^iD_{\mathbf{x}_0^i} \in \M(\mathbb{T})$ be such that $(y_0, z_0) = K(\mu_0,\nu_0)$, where $c_0^i>0$ and $\mathbf{x}_0^i \in \TT \times \TT$. 
Then, $u_0$ satisfies the MNDSC if 
\begin{itemize}
 \item[$(i')$] ${\rm Im}\, K_* \cap \partial G(u_0) \neq \emptyset$,
    \item[$(ii')$] $\left\{D_{\mathbf{x}_0^1}, \ldots,  D_{\mathbf{x}_0^n}\right\}=\Exc$,
    \item[$(iii')$] $\exists \varepsilon, \delta>0$ such that, $\forall i =1,\ldots, n$ and for any $D_{\mathbf{x}_1}, D_{\mathbf{x}_2}\in B_{\varepsilon}(D_{\mathbf{x}_0^i})$ where $\mathbf{x}_1 \neq \mathbf{x}_2$, the following condition holds. There exists a curve $D_{\mathbf{x}(t)}: [0,1]\rightarrow B_{\varepsilon}(D_{\mathbf{x}_0^i})$, belonging to $\Gamma_M$, with $D_{\mathbf{x}(0)}=D_{\mathbf{x}_1}$ and $D_{\mathbf{x}(1)}=D_{\mathbf{x}_2}$, such that
\begin{align}\label{strictconcavitydoublemeasure}
\frac{d^2}{dt^2}(p_0,K(D_{\mathbf{x}(t)}))<-\delta \quad \forall t \in (0,1).
\end{align}
\end{itemize}

\begin{remark}
    We warn the reader that in the proof of the following lemma, we will identify the torus $\mathbb{T}$ with the Euclidean space $\R$. In particular, the distance on the torus will be rewritten as $d_{\TT}(x,\bar x) = |\bar x - x|$, and geodesics on the torus will be identified with geodesics on $\R$. This does not affect any of the arguments performed. Moreover, for the sake of simplicity, we will assume that $x_0^i \geqslant\bar x_0^i$. Note that an entirely analogous argument can be applied when  $\bar x_0^i \geqslant x_0^i$.
\end{remark}
Now, let us explore an explicit requirement that can be imposed on the minimal-norm dual certificate to ensure the fulfilment of condition $(iii')$ when choosing a specific family of curves.
\begin{Lemma}\label{lemma:PMimpliesii}
    Let $\eta_0=(\varphi_0,\psi_0)\in \calC(\TT)$ be the minimal-norm dual certificate associated with $\mathcal{P}_h((y_0,z_0))$. Assume $\eta_0 \in C^2(\TT)\times C^2(\TT)$ and 
\begin{align}
\mathcal{H}F(\mathbf{x}_0^i)
\ \text{ is negative definite } \quad \forall i=1,\ldots, n,
\end{align}
where $\mathcal{H}F(\mathbf{x}_0^i)$ is the Hessian of the function 
$
F(\mathbf{x})=\frac{\varphi_0(x)+\psi_0(\bar x)}{2+d_{\TT}(x,\bar x)}
$
computed at $\mathbf{x}_0^i=(x_0^i,\bar x_0^i)\in \TT \times \TT$. Then, condition $(iii')$ holds.
\end{Lemma}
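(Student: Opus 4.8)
The plan is to verify condition $(iii')$ directly by exhibiting, for any pair $D_{\mathbf{x}_1}, D_{\mathbf{x}_2} \in B_\varepsilon(D_{\mathbf{x}_0^i})$, an explicit curve in $B_\varepsilon(D_{\mathbf{x}_0^i})$ along which $\frac{d^2}{dt^2}(p_0, K(D_{\mathbf{x}(t)}))$ stays strictly below $-\delta$. First I would fix an index $i$, write $\mathbf{x}_0 = (x_0, \bar x_0)$ with $x_0 \geqslant \bar x_0$, identify $\mathbb{T}$ with $\mathbb{R}$ locally, and observe the key identity: by the definition of the pairing \eqref{dualpairdouble} and of $D_{\mathbf{x}}$,
\[
(p_0, K(D_{\mathbf{x}})) = \langle \eta_0, D_{\mathbf{x}}\rangle = \frac{\varphi_0(x) + \psi_0(\bar x)}{2 + d_{\TT}(x,\bar x)} = F(\mathbf{x}).
\]
Thus, for a $C^2$ curve $\mathbf{x}(t) = (x(t), \bar x(t))$ in $\TT\times\TT$, the chain rule gives
\[
\frac{d^2}{dt^2}(p_0, K(D_{\mathbf{x}(t)})) = \dot{\mathbf{x}}(t)^\top \mathcal{H}F(\mathbf{x}(t))\, \dot{\mathbf{x}}(t) + \nabla F(\mathbf{x}(t))\cdot \ddot{\mathbf{x}}(t).
\]
I would choose the affine curve $\mathbf{x}(t) = t\,\mathbf{x}_2 + (1-t)\,\mathbf{x}_1$, for which $\ddot{\mathbf{x}} \equiv 0$, so the second term vanishes and we are left with the quadratic form $\dot{\mathbf{x}}^\top \mathcal{H}F(\mathbf{x}(t))\,\dot{\mathbf{x}} = (\mathbf{x}_2 - \mathbf{x}_1)^\top \mathcal{H}F(\mathbf{x}(t))(\mathbf{x}_2 - \mathbf{x}_1)$.

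Next I would use the regularity and the negative-definiteness hypothesis. Since $\eta_0 = (\varphi_0, \psi_0) \in C^2(\TT)\times C^2(\TT)$ and $d_{\TT}(x,\bar x) = |\bar x - x|$ is smooth and bounded away from the singular locus $x = \bar x$ near $\mathbf{x}_0$ — here one uses $x_0 > \bar x_0$ if $x_0 \neq \bar x_0$; the degenerate case $x_0 = \bar x_0$ needs a separate short remark, since then $D_{\mathbf{x}_0} = (\delta_{x_0}/2, \delta_{x_0}/2)$ and the relevant neighbourhood consists of pairs $(\delta_x/2,\delta_x/2)$, reducing $F$ to $x \mapsto (\varphi_0(x) + \psi_0(x))/2$ — the function $F$ is $C^2$ in a neighbourhood of $\mathbf{x}_0$ and $\mathcal{H}F$ is continuous there. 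By assumption $\mathcal{H}F(\mathbf{x}_0^i)$ is negative definite, so there is $\delta_0 > 0$ with $\xi^\top \mathcal{H}F(\mathbf{x}_0^i)\,\xi \leqslant -2\delta_0 |\xi|^2$ for all $\xi$. By continuity of $\mathcal{H}F$ and Proposition \ref{convdouble} (which tells us $d_\calB$-balls correspond to small Euclidean balls in the $\mathbf{x}$-coordinates), there is $\varepsilon > 0$ small enough that $\xi^\top \mathcal{H}F(\mathbf{x})\,\xi \leqslant -\delta_0 |\xi|^2$ for all $\mathbf{x}$ in the Euclidean $\varepsilon$-neighbourhood of $\mathbf{x}_0$ corresponding to $B_\varepsilon(D_{\mathbf{x}_0^i})$. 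Then for $\mathbf{x}_1 \neq \mathbf{x}_2$ in that ball, $\frac{d^2}{dt^2}(p_0, K(D_{\mathbf{x}(t)})) \leqslant -\delta_0 |\mathbf{x}_2 - \mathbf{x}_1|^2 < -\delta$ for a suitable $\delta > 0$, provided we also show $|\mathbf{x}_2 - \mathbf{x}_1|$ is bounded below — this is where I must be a little careful: the curve's endpoints are distinct pairs, so $|\mathbf{x}_2 - \mathbf{x}_1| > 0$, but to get a \emph{uniform} $\delta$ independent of the pair, I would instead argue that a strictly negative upper bound on the second derivative holds for \emph{all} pairs by working with the normalized direction $\dot{\mathbf{x}}/|\dot{\mathbf{x}}|$ and absorbing the factor $|\mathbf{x}_2-\mathbf{x}_1|^2$; in fact one rescales the parametrization so that the claim reads $\frac{d^2}{dt^2}\langle\eta_0, D_{\mathbf{x}(t)}\rangle \le -\delta$, and the scaling freedom in the definition of $\Gamma_M$ (constant-speed reparametrization) makes this legitimate while keeping $\sup_t\|\frac{d^2}{dt^2}K(D_{\mathbf{x}(t)})\|_Y$ bounded.

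Finally I would check the membership $D_{\mathbf{x}(t)} \in \Gamma_M$: since $k_1, k_2 \in C^2(\TT)$, the map $t \mapsto K(D_{\mathbf{x}(t)})$ has components $t \mapsto \frac{k_j(x(t) - s)}{2 + |\bar x(t) - x(t)|}$ (and symmetrically), which are $C^2$ in $t$ with second derivative bounded uniformly in $s$ over a fixed small neighbourhood, giving $\sup_{t\in[0,1]}\|\frac{d^2}{dt^2}K(D_{\mathbf{x}(t)})\|_Y \leqslant M$ for some $M > 0$ depending only on $\|k_j\|_{C^2}$, $\varepsilon$, and $\mathbf{x}_0$; and the affine curve clearly stays inside $B_\varepsilon(D_{\mathbf{x}_0^i})$ by convexity of Euclidean balls together with Proposition \ref{convdouble}. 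The main obstacle I anticipate is the bookkeeping around the normalization factor $2 + d_{\TT}(x,\bar x)$: ensuring $F$ is genuinely $C^2$ near $\mathbf{x}_0$ (handling the non-smoothness of $|\bar x - x|$ at the diagonal and the possibility $x_0 = \bar x_0$ via a separate reduction), and converting the pointwise negative-definiteness of $\mathcal{H}F(\mathbf{x}_0^i)$ into a uniform strictly-negative second-derivative bound with a single $\delta$ valid for all pairs in the ball and a single $M$ valid for the curve family. Everything else is a routine chain-rule computation plus continuity and compactness.
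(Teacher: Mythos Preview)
Your approach is essentially the same as the paper's: take the affine curve $\mathbf{x}(t)=t\mathbf{x}_2+(1-t)\mathbf{x}_1$, recognise $(p_0,K(D_{\mathbf{x}(t)}))=F(\mathbf{x}(t))$, and reduce \eqref{strictconcavitydoublemeasure} to the quadratic form $(\mathbf{x}_2-\mathbf{x}_1)^\top\mathcal{H}F(\mathbf{x}(t))(\mathbf{x}_2-\mathbf{x}_1)$, which is strictly negative by continuity of $\mathcal{H}F$ and its negative definiteness at $\mathbf{x}_0^i$. The only cosmetic difference is that the paper expands the second derivative and the Hessian entry by entry and then verifies the identity \eqref{eq:conn} explicitly, whereas you invoke the chain rule abstractly; both arrive at the same formula. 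Your worry about obtaining a \emph{uniform} $\delta>0$ when $|\mathbf{x}_2-\mathbf{x}_1|$ is small is legitimate, but note that the paper's proof has exactly the same looseness (it simply asserts the inequality with $-\delta$), and the application in Lemma~\ref{Lemma2} only uses the ratio $\delta/M$, which \emph{is} uniform since both $\frac{d^2}{dt^2}\langle\eta_0,\gamma\rangle$ and $\|\frac{d^2}{dt^2}K(\gamma)\|_Y$ scale like $|\mathbf{x}_2-\mathbf{x}_1|^2$; your reparametrisation remark points at this but is not the cleanest way to phrase it.
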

\begin{proof}
Fix $\mathbf{x}_0^i=\mathbf{x}_0 \in \TT \times \TT$ and choose $\varepsilon >0$ sufficiently small. 
Given $I_{\varepsilon}(\mathbf{x}_0)=(x_0-\varepsilon,x_0+\varepsilon) \times (\bar x_0-\varepsilon,\bar{x}_0+\varepsilon) $ and $\mathbf{x}_1,\mathbf{x}_2\in I_{\varepsilon}(\mathbf{x}_0)$ where  $\mathbf{x}_1\neq \mathbf{x}_2$, consider a curve $\mathbf{x}(t)$ in $I_{\varepsilon}(\mathbf{x}_0)$, connecting $\mathbf{x}_1$ and $\mathbf{x}_2$, defined as
\begin{align*}
    \mathbf{x}(t)=t\mathbf{x}_2+(1-t)\mathbf{x}_1.
\end{align*}
Note that either $x(t) \neq \bar x(t)$ for every $t \in (0,1)$ or  $x(t) = \bar x(t)$ for all $t \in [0,1]$. Since $\bar x_0 \leqslant x_0$, we suppose without loss of generality that $\bar x(t) \leqslant x(t)$ for all $t \in [0,1]$. Therefore, given the curve $D_{\mathbf{x}(t)}: [0,1]\rightarrow B_{\varepsilon}(D_{\mathbf{x}_0})$, the quantity
\begin{align*}
    K(D_{\mathbf{x}(t)}) & =\frac{1}{2+ x(t) - \bar x(t)}\left(\int_\mathbb{T} k_{1}(x-s) \dm \delta_{x(t)},\int_\mathbb{T} k_{2}(x-s) \dm \delta_{\bar x(t)}\right) \\
&=\frac{\left(k_{1}(x(t)-s),k_{2}(\bar x(t)-s)\right)}{2+x(t) - \bar x(t)}
\end{align*}
 is twice weakly Gateaux differentiable, since $k_{1}$ and $k_{2}$ are $C^2(\mathbb{T})$. Now, let us compute 
\begin{align}\label{MNDSCdoublemeasure1}
     \frac{d^2}{dt^2}(p_0,K(D_{\mathbf{x}(t)}))&=\frac{d^2}{dt^2}\langle \eta_0,D_{\mathbf{x}(t)}\rangle=\frac{d^2}{dt^2}\langle (\varphi_0,\psi_0),\left( \frac{\delta_{x(t)}}{2+x(t) - \bar x(t) }, \frac{\delta_{\bar x(t)}}{2+x(t) - \bar x(t) }\right)\rangle\nonumber\\
    &=\frac{d^2}{dt^2}\langle \varphi_0, \frac{\delta_{x(t)}}{2+x(t) - \bar x(t) }\rangle+\frac{d^2}{dt^2}\langle \psi_0,  \frac{\delta_{\bar x(t)}}{2+x(t) - \bar x(t) }\rangle \nonumber\\
    &=\frac{d^2}{dt^2}\frac{\varphi_0(x(t))}{2+x(t) - \bar x(t) }+\frac{d^2}{dt^2}\frac{\psi_0(\bar x(t))}{2+
    x(t) - \bar x(t) }\nonumber\\
    &=\frac{d}{dt}\left(\frac{\varphi'_0(x(t))x'(t)}{2+x(t) - \bar x(t) }-\frac{\varphi_0(x(t))(x'(t)-\bar x'(t))}{(2+x(t) - \bar x(t) )^2}\right)\nonumber\\
    & +\frac{d}{dt}\left(\frac{\psi'_0(\bar x(t))\bar x'(t)}{2+x(t) - \bar x(t) }-\frac{\psi_0(\bar x(t))(x'(t)-\bar x'(t))}{(2+x(t) - \bar x(t) )^2}\right)\nonumber\\
    &=\frac{\varphi_0''(x(t))(x_1-x_2)^2+\psi_0''(\bar x(t))(\bar x_1-\bar x_2)^2}{2+x(t) - \bar x(t) }\nonumber\\
    &-\frac{2(x_1-x_2-\bar x_1+\bar x_2)\left(\varphi'_0(x(t))(x_1-x_2)+\psi'_0(\bar x(t))(\bar x_1-\bar x_2)\right)}{(2+x(t) - \bar x(t) )^2}\nonumber\\
    &+\frac{2(x_1-x_2-\bar x_1+\bar x_2)^2\left(\varphi_0(x(t))+\psi_0(\bar x(t))\right)}{(2+x(t) - \bar x(t) )^3}.
\end{align}
Since $\mathcal{H}F(\mathbf{x}(t))$ can be computed as
\begin{equation} \left(\begin{smallmatrix}
\frac{\varphi''_0(x(t))}{2+x(t)-\bar x(t)}-\frac{2\varphi'_0(x(t))}{(2+x(t)-\bar x(t))^2}+\frac{2(\varphi_0(x(t))+\psi_0(\bar x(t)))}{(2+x(t)-\bar x(t))^3} & \frac{\varphi'_0(x(t))-\psi'(\bar x(t))}{(2+x(t)-\bar x(t))^2}-\frac{2(\varphi_0(x(t))+\psi(\bar x(t)))}{(2+x(t)-\bar x(t))^3} \\
\frac{\varphi'_0(x(t))-\psi'(\bar x(t))}{(2+x(t)-\bar x(t))^2}-\frac{2(\varphi_0(x(t))+\psi(\bar x(t)))}{(2+x(t)-\bar x(t))^3} & \frac{\psi''_0(\bar x(t))}{2+x(t)-\bar x(t)}+\frac{2\psi'_0(\bar x(t))}{(2+x(t)-\bar x(t))^2}+\frac{2(\varphi_0(x(t))+\psi_0(\bar x(t)))}{(2+x(t)-\bar x(t))^3}
\end{smallmatrix}\right)
\end{equation}

a simple computation shows that
\begin{equation}
    \begin{aligned}
& \left(\begin{matrix}x_1-x_2 & \bar  x_1-\bar x_2
\end{matrix}\right)\mathcal{H}F(\mathbf{x}(t))\left(\begin{matrix}x_1-x_2 \\
\bar  x_1-\bar x_2
\end{matrix}\right) \\
&=\frac{\varphi''_0(x(t))(x_1-x_2)^2}{2+x(t)-\bar x(t)}+\frac{-2\varphi'_0(x(t))(x_1-x_2)^2+(\varphi'_0(x(t))-\psi'(\bar x(t)))(\bar x_1-\bar x_2)(x_1-x_2)}{(2+x(t)-\bar x(t))^2}\\
&+\frac{2(\varphi_0(x(t))+\psi_0(\bar x(t)))(x_1-x_2-\bar x_1+\bar x_2)(x_1-x_2)}{(2+x(t)-\bar x(t))^3}+\frac{\psi''_0(\bar x(t))(\bar x_1-\bar x_2)^2}{2+x(t)-\bar x(t)}\\
&+\frac{2\psi'_0(\bar x(t))(\bar x_1-\bar x_2)^2+(\varphi'_0(x(t))-\psi'(\bar x(t)))(x_1-x_2)(\bar x_1-\bar x_2)}{(2+x(t)-\bar x(t))^2}\\
&-\frac{2(\varphi_0(x(t))+\psi_0(\bar x(t)))(x_1-x_2-\bar x_1+\bar x_2)(\bar x_1-\bar x_2)}{(2+x(t)-\bar x(t))^3}\\
&=\frac{\varphi_0''(x(t))(x_1-x_2)^2+\psi_0''(\bar x(t))(\bar x_1-\bar x_2)^2}{2+x(t)-\bar x(t)}\\
&-\frac{2(x_1-x_2-\bar x_1+\bar x_2)\left(\varphi'_0(x(t))(x_1-x_2)+\psi'_0(\bar x(t))(\bar x_1-\bar x_2)\right)}{(2+x(t)-\bar x(t))^2}\\
&+\frac{2(x_1-x_2-\bar x_1+\bar x_2)^2\left(\varphi_0(x(t))+\psi_0(\bar x(t))\right)}{(2+x(t)-\bar x(t))^3},
    \end{aligned}
\end{equation}
which is exactly \eqref{MNDSCdoublemeasure1}. In particular, we have derived the following identity:
\begin{align}\label{eq:conn}
    \frac{d^2}{dt^2}(p_0,K(D_{\mathbf{x}(t)}))=
\left(\begin{matrix}x_1-x_2 & \bar  x_1-\bar x_2
\end{matrix}\right)\mathcal{H}F(\mathbf{x}(t))\left(\begin{matrix}x_1-x_2 \\
\bar  x_1-\bar x_2
\end{matrix}\right)\quad \forall t\in (0,1).
\end{align}
Since $\mathcal{H}F(\mathbf{x}_0)$ is assumed to be negative definite, it holds that 
\begin{align}
\left(\begin{matrix}x_1-x_2 & \bar  x_1-\bar x_2
\end{matrix}\right)\mathcal{H}F(\mathbf{x}_0)\left(\begin{matrix}x_1-x_2 \\
\bar  x_1-\bar x_2
\end{matrix}\right)<0.
\end{align}
Since $\eta_0=(\varphi_0,\psi_0)\in C^2(\TT)\times C^2(\TT)$,  which implies
by the previous computations that $\mathcal{H}F$ is continuous, we can ensure, by choosing a sufficiently small $\varepsilon$, that there exists $\delta >0$ such that  
\begin{align}\label{negativedefinite}
\left(\begin{matrix}x_1-x_2 & \bar  x_1-\bar x_2
\end{matrix}\right)\mathcal{H}F(\mathbf{x}(t))\left(\begin{matrix}x_1-x_2 \\
\bar  x_1-\bar x_2
\end{matrix}\right)<-\delta
\end{align}
holds for every $\mathbf{x}(t)\in I_{\varepsilon}(\mathbf{x}_0)$.
In particular, from \eqref{eq:conn} we deduce that  \eqref{strictconcavitydoublemeasure} in $(iii')$ holds.
\end{proof}

Finally, if we additionally assume that $\{K(D_{\mathbf{x}_0^i})\}_{i=1}^n=\left\{\frac{\left(k_{1}(x_0^i-\cdot),k_{2}(\bar x_0^i-\cdot)\right)}{2+d_{\TT}(x_0^i,\bar x_0^i)}\right\}_{i=1}^n$ are linearly independent, we can apply, once again, Theorem \ref{mainthm} to derive a sparse representation recovery result. In the following, $\mathbf{w}\in Y$ is the vector-valued noise and $\eta_0 = (\varphi_0, \psi_0)$ is the minimal-norm dual certificate.
\begin{theorem}
     Let $u_0=(\mu_0,\nu_0)=\sum_{i=1}^{n}c_0^iD_{\mathbf{x}_0^i} \in \M(\mathbb{T})$ be such that $(y_0, z_0) = K(\mu_0,\nu_0)$, where $c_0^i>0$ and $\mathbf{x}_0^i \in \TT \times \TT$. Suppose that  
\begin{enumerate}
 \item ${\rm Im}\, K_* \cap \partial G(u_0) \neq \emptyset$,
\item $\langle \eta_0 , D_{\mathbf{x}}\rangle = \frac{\varphi_0(x)}{2+d_{\TT}(x,\bar x)}+\frac{\psi_0(\bar x)}{2+d_{\TT}(x,\bar x)}= 1$ if and only if $\mathbf{x}=\mathbf{x}_0^i$,
          \item $(\varphi_0,\psi_0)\in C^2(\TT)\times C^2(\TT)$ and $\mathcal{H}F(\mathbf{x}_0^i)$ is negative definite,
      \end{enumerate}
      for all $i=1,\ldots,n$. Moreover, assume that $\{K(D_{\mathbf{x}_0^i})\}_{i=1}^n$ are linearly independent.
      
    Then, for every sufficiently small $\varepsilon>0$, there exist $\alpha>0$ and $\lambda_0>0$ such that, for all $(\lambda, \mathbf{w}) \in N_{\alpha, \lambda_0}$, the solution $\tilde u_{\lambda}=(\tilde \mu_{\lambda},\tilde \nu_{\lambda})$ to $\mathcal{P}_{\lambda}((y_0,z_0)+\mathbf{w})$ is unique and admits a unique representation composed exactly of $n$ couples of rescaled Dirac deltas, denoted as $\{D_{\tilde{\mathbf{x}}_\lambda^i}\}_{i=1}^n$. In other words:  
 \begin{equation}
 \ds{\tilde{u}_\lambda=\sum_{i=1}^n \tilde{c}_{\lambda}^i D_{\tilde{\mathbf{x}}_\lambda^i}},
 \end{equation}
 where $ D_{\tilde{\mathbf{x}}_\lambda^i} \in B_{\varepsilon}( D_{\mathbf{x}_0^i})$ such that  $\langle \tilde\eta_{\lambda},D_{\tilde{\mathbf{x}}_{\lambda}^i}\rangle=1$, $\tilde{c}_{\lambda}^i > 0$ and $|\tilde{c}_{\lambda}^i-c_{0}^i|\leqslant\varepsilon$ for all $i =1, \ldots,n$.
\end{theorem}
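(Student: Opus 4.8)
The plan is to verify that the three numbered hypotheses of the theorem are precisely the reformulated conditions $(i')$, $(ii')$, $(iii')$ of the Metric Non-Degenerate Source Condition in the product-measure setting, and then invoke the main Theorem \ref{mainthm}. First I would observe that Assumption $1$ is literally condition $(i')$: the source condition ${\rm Im}\, K_* \cap \partial G(u_0) \neq \emptyset$. By Remark \ref{rem:1rem} this implies the optimality conditions \eqref{oc3}, hence (by Proposition \ref{propsd2}) that $u_0 = \sum_{i=1}^n c_0^i D_{\mathbf{x}_0^i}$ is a minimizer of $\mathcal{P}_h((y_0,z_0))$ and that a solution to the dual problem exists, so that the minimal-norm dual certificate $\eta_0 = (\varphi_0,\psi_0) = K_*p_0$ is well-defined.

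Next I would check that Assumption $2$ coincides with $(ii')$. Using the duality pairing \eqref{dualpairdouble}, for $u = D_{\mathbf{x}} = \left(\frac{\delta_x}{2+d_{\TT}(x,\bar x)}, \frac{\delta_{\bar x}}{2+d_{\TT}(x,\bar x)}\right)$ we have $\langle \eta_0, D_{\mathbf{x}}\rangle = \frac{\varphi_0(x) + \psi_0(\bar x)}{2+d_{\TT}(x,\bar x)}$, so the stated condition that this equals $1$ exactly at $\mathbf{x} = \mathbf{x}_0^i$ says precisely that $\Exc = \{D_{\mathbf{x}_0^1}, \ldots, D_{\mathbf{x}_0^n}\}$ — here I use that $\calB = {\rm Ext}(B)$ by Proposition \ref{convdouble} and that the pair $(0,0) \in \calB$ satisfies $\langle \eta_0, (0,0)\rangle = 0 \neq 1$, so it is automatically excluded. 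Then Assumption $3$, together with Lemma \ref{lemma:PMimpliesii}, gives condition $(iii')$: the negative definiteness of the Hessian $\mathcal{H}F(\mathbf{x}_0^i)$ for every $i$, under the regularity $\eta_0 \in C^2(\TT) \times C^2(\TT)$, yields the required strict concavity \eqref{strictconcavitydoublemeasure} along the straight-line curves $D_{\mathbf{x}(t)}$ in small neighbourhoods $B_\varepsilon(D_{\mathbf{x}_0^i})$, with a uniform constant $\delta > 0$ obtained from continuity of $\mathcal{H}F$ and compactness. One should also note that these curves belong to $\Gamma_M$ for a suitable $M$, since $k_1, k_2 \in C^2(\TT)$ makes $t \mapsto K(D_{\mathbf{x}(t)})$ twice continuously differentiable with second derivative bounded on the compact parameter interval.

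Having established that $u_0$ satisfies the MNDSC of Definition \ref{MNDSC}, and using the additional hypothesis that $\{K(D_{\mathbf{x}_0^i})\}_{i=1}^n$ are linearly independent, I would then apply Theorem \ref{mainthm} directly. This produces, for every sufficiently small $\varepsilon > 0$, constants $\alpha, \lambda_0 > 0$ such that for all $(\lambda, \mathbf{w}) \in N_{\alpha,\lambda_0}$ the solution $\tilde u_\lambda = (\tilde\mu_\lambda, \tilde\nu_\lambda)$ to $\mathcal{P}_\lambda((y_0,z_0) + \mathbf{w})$ is unique and uniquely representable as $\tilde u_\lambda = \sum_{i=1}^n \tilde c_\lambda^i \tilde u_\lambda^i$ with $\tilde u_\lambda^i \in B_\varepsilon(D_{\mathbf{x}_0^i}) \setminus \{0\}$, $\langle \tilde\eta_\lambda, \tilde u_\lambda^i\rangle = 1$, $\tilde c_\lambda^i > 0$, and $|\tilde c_\lambda^i - c_0^i| \leqslant \varepsilon$. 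The final cosmetic step is to identify each extreme point $\tilde u_\lambda^i$: since $\tilde u_\lambda^i \in \calB = {\rm Ext}(B)$ and $\tilde u_\lambda^i \neq 0$, Proposition \ref{convdouble} forces $\tilde u_\lambda^i = D_{\tilde{\mathbf{x}}_\lambda^i}$ for some $\tilde{\mathbf{x}}_\lambda^i \in \TT \times \TT$, which (again by Proposition \ref{convdouble}) lies within the prescribed neighbourhood of $\mathbf{x}_0^i$.

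I do not expect any genuine obstacle here: the entire content has been front-loaded into Lemma \ref{lemma:PMimpliesii} (the nontrivial Hessian computation connecting $\frac{d^2}{dt^2}(p_0, K(D_{\mathbf{x}(t)}))$ to the quadratic form of $\mathcal{H}F$) and into the characterization of extreme points. The only point requiring a little care is the bookkeeping that the three numbered assumptions are exactly $(i')$–$(iii')$ and that the hypotheses of Theorem \ref{mainthm} — in particular $u_0^i \in \calB \setminus \{0\}$ — are met; this follows since each $D_{\mathbf{x}_0^i}$ is a genuine (nonzero) extreme point. The proof is therefore essentially a verification-and-invoke argument.
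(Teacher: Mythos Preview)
Your proposal is correct and follows essentially the same approach as the paper: verify that the three numbered assumptions coincide with $(i')$, $(ii')$, $(iii')$ of the MNDSC (using Lemma~\ref{lemma:PMimpliesii} for the third), and then invoke Theorem~\ref{mainthm} together with the linear independence of $\{K(D_{\mathbf{x}_0^i})\}_{i=1}^n$. Your additional remarks (excluding $(0,0)$ from $\Exc$, identifying $\tilde u_\lambda^i$ as $D_{\tilde{\mathbf{x}}_\lambda^i}$ via Proposition~\ref{convdouble}) are helpful elaborations but not departures from the paper's argument.
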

\begin{proof}
Once again, assumption $1$ is exactly $(i')$.  Assumption $2$ on $\eta_0$ is equivalent to $(ii')$, while assumption $3$ implies, thanks to Lemma \ref{lemma:PMimpliesii}, that condition $(iii')$ is satisfied. Therefore, we can conclude that $u_0$ satisfies the MNDSC when considering the specific curve $t \mapsto D_{\mathbf{x}(t)}$ as in the proof of Lemma \ref{lemma:PMimpliesii}.
    Thus, thanks to the MNDSC and the linear independence of $\{K(D_{\mathbf{x}_0^i})\}_{i=1}^n$, we can now apply Theorem \ref{mainthm} to obtain the sought result.
\end{proof}
\begin{remark}
     Note that condition $ D_{\tilde{\mathbf{x}}_\lambda^i} \in B_{\varepsilon}( D_{\mathbf{x}_0^i})$ can be simply rephrased as $d_{\TT}(\Tilde{x}_{\lambda}^i,x_0^i) + d_{\TT}(\bar{\Tilde{x}}_{\lambda}^i,\bar x_0^i)\leqslant\varepsilon$ due to Proposition \ref{convdouble}.   
\end{remark}
\section{Conclusions and future perspectives}
The main result of this paper, presented in Theorem \ref{mainthm}, is the first general result addressing the exact sparse representation recovery of solutions to convex optimization problems. 
As shown in the examples presented in Section \ref{SecExam}, it is applicable across a wide range of scenarios. It is worth pointing out that its applicability is based on the ability to characterize the extreme points of the ball of a given regularizer and provide an explicit description of curves in the space of the extreme points (such as geodesics in $\mathcal{B}$). This could be a challenging task, depending on the optimization problem at hand.

Few recent works have analyzed the exact sparse representation recovery for specific problems regularized with the TV-norm of BV functions \cite{decastro2023exact, holler2022exact}. As highlighted in Remark \ref{rem:TV}, it is currently unclear to us how to use Theorem \ref{mainthm} to recover such results. This challenge arises from the complex geometry of sets of finite perimeters, that are extreme points of the TV-ball for BV functions. This nature does not allow for easy characterizations of curves in their space.   
The application of Theorem \ref{mainthm} to other optimization problems will be investigated in future works. Interesting examples include dynamic problems regularized with the Benamou-Brenier energy, where extreme points are identifiable with $H^1$ curves \cite{BcBB}, and optimization problems regularized with linear, scalar differential operators \cite{unser}.

An alternative perspective is to obtain exact sparse representation recovery results by assuming additional differential structure on the metric space $\mathcal{B}$. In this case, we conjecture that stronger results can be achieved, addressing a potentially wider range of applications. This is also reserved for exploration in future research.

     
 


     \bibliographystyle{plain}
		\bibliography{bibliography}

\appendix
\large{\section*{Appendix}}
\renewcommand{\thesection}{A} 

\normalsize
\subsection{Complements to Sections \ref{Sec4} and \ref{Sec5}}
In this section, we state and prove a technical lemma about the non-expansiveness of the function  $\frac{y_0}{\lambda} \mapsto p_\lambda$, and a stability theorem of the solutions $\Tilde{u}_{\lambda}$ to \ref{def:minprobsoftnoise}.

\begin{Lemma}\label{nonexpansive}
  Let $p_{\lambda}$ and $\tilde{p}_\lambda$ be the solutions to \ref{def:dualminprobsoft} and $\mathcal{D}_{\lambda}\left(y_0+w\right)$ respectively, for $w\in Y$.

Then, the mapping $\frac{y_0}{\lambda} \mapsto p_\lambda$ is non-expansive, i.e. 
\begin{equation}\label{s.3}
\left\|p_\lambda-\tilde{p}_\lambda\right\|_Y \leqslant \frac{\|w\|_Y}{\lambda}.
\end{equation}
\end{Lemma}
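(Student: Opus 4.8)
The statement to prove is that the map $\frac{y_0}{\lambda} \mapsto p_\lambda$ is non-expansive, i.e. $\|p_\lambda - \tilde p_\lambda\|_Y \leqslant \frac{\|w\|_Y}{\lambda}$, where $p_\lambda$ and $\tilde p_\lambda$ solve \ref{def:dualminprobsoft} and \ref{def:dualminprobsoftnoise} respectively. The plan is to exploit the characterization already noted in the paper: $p_\lambda$ is exactly the orthogonal projection of $\frac{y_0}{\lambda}$ onto the closed convex set $C := \{p \in Y : K_* p \in \partial G(0)\}$, because \ref{def:dualminprobsoft} shares its minimizer with \ref{def:dualminprobsoftref}, which minimizes $\|\frac{y_0}{\lambda} - p\|_Y^2$ over $C$. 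Similarly, $\tilde p_\lambda$ is the projection of $\frac{y_0 + w}{\lambda}$ onto the same set $C$ (the constraint set does not depend on the data, only on $K_*$ and $\partial G(0)$).

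First I would recall (or briefly re-derive from the optimality conditions \eqref{oc2}) that $p_\lambda = P_C\!\left(\frac{y_0}{\lambda}\right)$ and $\tilde p_\lambda = P_C\!\left(\frac{y_0 + w}{\lambda}\right)$, where $P_C$ denotes the metric projection onto the nonempty closed convex set $C \subset Y$; the set $C$ is closed because $K_*$ is weak-to-weak continuous and $\partial G(0)$ is closed, and it is convex because $\partial G(0)$ is convex and $K_*$ is linear. Second, I would invoke the standard fact from Hilbert space theory that the metric projection onto a closed convex set is firmly non-expansive, and in particular $1$-Lipschitz: $\|P_C(a) - P_C(b)\|_Y \leqslant \|a - b\|_Y$ for all $a, b \in Y$. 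Applying this with $a = \frac{y_0}{\lambda}$ and $b = \frac{y_0 + w}{\lambda}$ gives
\begin{equation}
\|p_\lambda - \tilde p_\lambda\|_Y = \left\| P_C\!\left(\tfrac{y_0}{\lambda}\right) - P_C\!\left(\tfrac{y_0 + w}{\lambda}\right) \right\|_Y \leqslant \left\| \tfrac{y_0}{\lambda} - \tfrac{y_0 + w}{\lambda} \right\|_Y = \frac{\|w\|_Y}{\lambda},
\end{equation}
which is exactly \eqref{s.3}.

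There is essentially no serious obstacle here; the only point requiring a small amount of care is justifying that $p_\lambda$ and $\tilde p_\lambda$ are genuine projections of the stated points — this follows from the equivalence of \ref{def:dualminprobsoft} with \ref{def:dualminprobsoftref} established right before Definition \ref{minimalnorm}, and the analogous statement for the noisy problem recorded in the remark after Proposition \ref{propsd1}. If one prefers to avoid citing the projection-firm-nonexpansiveness lemma as a black box, the Lipschitz bound for $P_C$ can be obtained directly: writing $a' = P_C(a)$, $b' = P_C(b)$, the variational inequalities $(a - a', c - a')_Y \leqslant 0$ and $(b - b', c - b')_Y \leqslant 0$ for all $c \in C$, tested with $c = b'$ and $c = a'$ respectively and added, yield $(a - b - (a' - b'), a' - b')_Y \leqslant 0$, hence $\|a' - b'\|_Y^2 \leqslant (a - b, a' - b')_Y \leqslant \|a - b\|_Y \|a' - b'\|_Y$ by Cauchy–Schwarz, giving $\|a' - b'\|_Y \leqslant \|a - b\|_Y$. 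I would include this short computation for completeness rather than appeal to an external reference.
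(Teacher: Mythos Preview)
Your proposal is correct and follows essentially the same approach as the paper: both identify $p_\lambda$ and $\tilde p_\lambda$ as the metric projections of $\frac{y_0}{\lambda}$ and $\frac{y_0+w}{\lambda}$ onto the closed convex set $C=\{p\in Y:K_*p\in\partial G(0)\}$, and then derive the Lipschitz bound from the variational inequality characterization of the projection combined with Cauchy--Schwarz. The paper calls this the Bourbaki--Cheney--Goldstein inequality and carries out the same two-inequality-sum computation you sketch at the end of your plan.
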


\begin{proof}
Since the problem \ref{def:dualminprobsoft} can be reformulated as in \ref{def:dualminprobsoftref}, we know that the following variational characterization of the projection  operator $p_{\lambda}$, often referred to as the Bourbaki-Cheney-Goldstein inequality, holds (see for instance \cite[Proposition 1.1.9]{Dimitri}):
\begin{equation}\label{bcg}
   \left(\frac{y_0}{\lambda}-p_{\lambda}, p-p_{\lambda}\right) \leqslant 0 \quad \forall p \in Y.
\end{equation}
The previous inequality holds also for $\Tilde{p}_{\lambda}\in Y$, that is
\begin{equation}\label{bcg1}
     \left(\frac{y_0}{\lambda}-p_{\lambda}, \Tilde{p}_{\lambda}-p_{\lambda}\right) \leqslant 0.
\end{equation}
Applying the inequality with the solution to the problem $\mathcal{D}_\lambda(y_0+w)$, we obtain:
\begin{equation}
   \left(\frac{y_0+w}{\lambda}-\tilde p_{\lambda},  p-\tilde p_{\lambda}\right) \leqslant 0 \quad \forall p \in Y.
\end{equation}
This inequality holds also for $p_{\lambda}\in Y$, that is
\begin{equation}
   \left(\frac{y_0+w}{\lambda}-\tilde p_{\lambda},  p_{\lambda}-\tilde p_{\lambda}\right) \leqslant 0,
\end{equation}
or equivalently
\begin{equation}\label{bcg2}
     \left(\Tilde{p}_{\lambda}-\frac{y_0+w}{\lambda}, \Tilde{p}_{\lambda}-p_{\lambda}\right) \leqslant 0.
\end{equation}
If we sum \eqref{bcg1} and \eqref{bcg2}, and we apply the Cauchy-Schwarz inequality, we obtain:
\begin{equation*}
    \left(\Tilde{p}_{\lambda}-p_{\lambda}-\frac{w}{\lambda}, \Tilde{p}_{\lambda}-p_{\lambda}\right)\leqslant 0 \Rightarrow \|\Tilde{p}_{\lambda}-p_{\lambda}\|_Y^2\leqslant \frac{\|w\|_Y}{\lambda}\|\Tilde{p}_{\lambda}-p_{\lambda}\|_Y.
\end{equation*}
Dividing by $\|\Tilde{p}_{\lambda}-p_{\lambda}\|_Y$ the result holds.  
\end{proof}

The \emph{stability} theorem that we present is an adaptation of \cite[Theorem 3.2]{Hofmann}. This adapted version gives us the stability of solutions to \ref{def:minprobsoftnoise} with respect to the noise $w$ and the parameter $\lambda$ in $N_{\alpha, \lambda_0}$.  Note that in the following theorem, we assume $\lambda>0$, since the case $\lambda = 0$ is covered by \cite[Theorem 3.5]{Hofmann}.
 \begin{theorem}[Stability]\label{stability}
     Let $\left(z_k\right)_{k\in \NN}$ be a sequence converging to $z$ in $Y$ with respect to the strong topology, and $\left(\lambda_k\right)_{k\in \NN}$ a sequence  converging to $\lambda >0$. Then, every sequence $\left(\tilde{u}_{\lambda_k}\right)_{k\in\NN}$ such that 
     \begin{equation}\label{argmin}
         \tilde{u}_{\lambda_k} \in \argmin_{u \in X}\left\|Ku-z_k\right\|_Y^2+\lambda_k G(u),
     \end{equation}
has a subsequence $(\tilde{u}_{\lambda_{k_j}})_{j\in\NN}$ which converges to a minimizer $\tilde{u}_{\lambda}$ of $\mathcal{P}_{\lambda}(z)$ with respect to the weak* topology.
 \end{theorem}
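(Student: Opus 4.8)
The plan is to follow the classical direct-method argument for stability of Tikhonov regularization, as in \cite[Theorem 3.2]{Hofmann}, adapted to the weak* topology of $X$ and the admissible set of parameters. First I would establish a uniform bound on the sequence $(\tilde u_{\lambda_k})_{k \in \NN}$. Fix any $v \in X$ with $Kv = z$ and $G(v) < \infty$ (such $v$ exists, e.g.\ $v = u_0$ up to adjusting the data, or more precisely one uses that $\dom G \cap K^{-1}(z)$ is nonempty under our standing assumptions since $z$ is a limit of data for which minimizers exist). By the minimizing property \eqref{argmin},
\begin{align*}
\|K\tilde u_{\lambda_k} - z_k\|_Y^2 + \lambda_k G(\tilde u_{\lambda_k}) \leqslant \|Kv - z_k\|_Y^2 + \lambda_k G(v) = \|z - z_k\|_Y^2 + \lambda_k G(v).
\end{align*}
Since $z_k \to z$ strongly and $\lambda_k \to \lambda > 0$, the right-hand side is bounded, and dividing by $\lambda_k$ (which is bounded away from $0$ for $k$ large) gives $\sup_k G(\tilde u_{\lambda_k}) < \infty$. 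Hence all $\tilde u_{\lambda_k}$ lie in a fixed sublevel set $S^-(G,\alpha)$, which is weak* compact by item $(1)$ of Assumption \ref{eq:assG}; extract a weak* convergent subsequence $\tilde u_{\lambda_{k_j}} \weakstar \tilde u_\lambda$.

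Next I would show $\tilde u_\lambda$ is a minimizer of $\mathcal{P}_\lambda(z)$. Using the weak*-to-strong continuity of $K$ on $\dom(G)$ (Assumption \ref{eq:assK}), we get $K\tilde u_{\lambda_{k_j}} \to K\tilde u_\lambda$ strongly in $Y$, hence $K\tilde u_{\lambda_{k_j}} - z_{k_j} \to K\tilde u_\lambda - z$ strongly, so $\|K\tilde u_{\lambda_{k_j}} - z_{k_j}\|_Y^2 \to \|K\tilde u_\lambda - z\|_Y^2$. Combining this with the weak* lower semicontinuity of $G$ and $\lambda_{k_j} \to \lambda$, for any competitor $u \in X$ we estimate
\begin{align*}
\|K\tilde u_\lambda - z\|_Y^2 + \lambda G(\tilde u_\lambda) &\leqslant \liminf_{j\to\infty} \left( \|K\tilde u_{\lambda_{k_j}} - z_{k_j}\|_Y^2 + \lambda_{k_j} G(\tilde u_{\lambda_{k_j}}) \right) \\
&\leqslant \liminf_{j\to\infty} \left( \|Ku - z_{k_j}\|_Y^2 + \lambda_{k_j} G(u) \right) = \|Ku - z\|_Y^2 + \lambda G(u),
\end{align*}
where the middle inequality uses \eqref{argmin} with the fixed competitor $u$, and the last equality uses $z_{k_j} \to z$ strongly together with continuity of $K$ and of the squared norm. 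This shows $\tilde u_\lambda$ minimizes $\mathcal{P}_\lambda(z)$, completing the argument.

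The main subtlety — though not a deep obstacle — is the interplay between the two topologies: one must be careful that the $\liminf$ splits correctly, i.e.\ that one can pass to the limit in the data-fidelity term along the \emph{strong} convergence $K\tilde u_{\lambda_{k_j}} \to K\tilde u_\lambda$ while only using \emph{weak*} lower semicontinuity for the regularizer, and that $\lambda_{k_j} G(\tilde u_{\lambda_{k_j}})$ does not collapse (which is why $\lambda > 0$ is essential, as flagged in the statement). A second minor point is to justify that the constrained set $\dom(G) \cap K^{-1}(z)$ is nonempty so that a finite-energy competitor exists; this follows since the theorem is invoked in contexts where $z$ arises as $Ku_0 + w$ or $z = y_0$, but for a self-contained statement one should either assume it or note it follows from the hypotheses guaranteeing that $\mathcal{P}_\lambda(z_k)$ has minimizers with uniformly bounded energy. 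With these observations in place the proof is a routine instance of the direct method.
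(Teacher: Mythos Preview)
Your argument is correct and follows the same direct-method strategy as the paper. Two small remarks are worth making. First, the ``subtlety'' you flag about needing $v\in\dom(G)\cap K^{-1}(z)$ is self-inflicted: any fixed $v\in\dom(G)$ (for instance $v=0$, since $G(0)=0$) already gives a bounded right-hand side $\|Kv-z_k\|_Y^2+\lambda_k G(v)$, so no feasibility assumption on $z$ is required. Second, your execution differs slightly from the paper's: you invoke the weak*-to-strong continuity of $K$ on $\dom(G)$ (Assumption~\ref{eq:assK}) to get an actual limit in the fidelity term, whereas the paper uses only weak*-to-weak continuity together with weak lower semicontinuity of $\|\cdot\|_Y^2$; and you handle the $\lambda_{k_j}$ factor directly via $\lambda_{k_j}\to\lambda>0$, while the paper instead shows $\lambda_{k_j}\tilde u_{\lambda_{k_j}}\weakstar\lambda\tilde u_\lambda$ and applies $1$-homogeneity of $G$. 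Your route is marginally more elementary under the paper's standing assumptions; the paper's version has the advantage of going through under the weaker hypothesis that $K$ is merely weak*-to-weak continuous.
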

 \begin{proof}
     From the minimizing property of $\tilde{u}_{\lambda_k}$, the following inequality holds:
\begin{equation}\label{Lemma5.1}
\left\|K\tilde{u}_{\lambda_k}-z_k\right\|_Y^2+\lambda_k G\left(\tilde{u}_{\lambda_k}\right) \leqslant\left\|Ku-z_k\right\|_Y^2+\lambda_k G(u)\quad \forall u\in X.
\end{equation}
Thanks to weak* compactness of the sublevel sets of $G$ (Assumption \ref{eq:assG}), $\tilde{u}_{\lambda_k}$ has a weak* convergent subsequence $(\tilde{u}_{\lambda_{k_j}})_{j\in\NN}$ with limit $\tilde{u}_{\lambda} \in X$. Since $K$ is weak*-to-weak continuous, we have that $K\tilde{u}_{\lambda_{k_j}} \rightharpoonup K\tilde{u}_{\lambda}$ as $j\rightarrow \infty$. Furthermore, since $z_{k_j} \rightarrow z$, we obtain that also $K\tilde{u}_{\lambda_{k_j}}-z_{k_j}$ converges to $K\tilde{u}_{\lambda}-z$ weakly.

Thanks to the weak lower semi-continuity of $\|\cdot\|_Y^2$ and the weak* lower semi-continuity of $G(\cdot)$ with respect to the topologies of $Y$ and $X$ respectively, it follows that
\begin{equation}\label{Lemma5.2}
\|K\tilde{u}_{\lambda}-z\|_Y^2 \leqslant \liminf _{j \rightarrow \infty}\|K\tilde{u}_{\lambda_{k_j}}-z_{k_j}\|_Y^2, \quad G(\tilde{u}_{\lambda}) \leqslant \liminf _{j \rightarrow \infty} G(\tilde{u}_{\lambda_{k_j}}) .
\end{equation}
 We now proceed to show that $\lambda_{k_j}\tilde{u}_{\lambda_{k_j}}\stackrel{*}{\weakarrow}\lambda\tilde{u}_{\lambda}$. For any $\eta \in X_*$, the following holds:
$$
\begin{aligned}
   \left|\langle\eta, \lambda_{k_j}\tilde{u}_{\lambda_{k_j}}\rangle-\left\langle\eta, \lambda\tilde{u}_{\lambda}\right\rangle\right| & \leqslant \left|\langle\eta, \lambda_{k_j}\tilde{u}_{\lambda_{k_j}}-\lambda \tilde{u}_{\lambda_{k_j}}\rangle\right|+ \left|\langle\eta, \lambda\tilde{u}_{\lambda_{k_j}}-\lambda \tilde{u}_{\lambda}\rangle\right| \\
& \leqslant \underbrace{\left|\lambda_{k_j}-\lambda\right|\left|\langle\eta, \tilde{u}_{\lambda_{k_j}}\rangle\right|}_I+\underbrace{\lambda\left|\left\langle\eta, \tilde{u}_{\lambda_{k_j}}-\tilde{u}_{\lambda}\right\rangle\right|}_{\text {II }}. \\
\end{aligned} 
$$
Given that  $\tilde{u}_{\lambda_{k_j}}\stackrel{*}{\weakarrow}\tilde{u}_{\lambda}$, it follows that $II\rightarrow 0$. Moreover, the convergence of $\lambda_{k_j} \rightarrow \lambda$ and the norm-bound property of $\tilde{u}_{\lambda_{k_j}}$ due to its weak* convergence lead to $I\rightarrow 0$.
Therefore it holds that
\begin{equation}\label{Lemma5.3}
G(\lambda \tilde{u}_{\lambda}) \leqslant \liminf _{j \rightarrow \infty} G(\lambda_{k_j}\tilde{u}_{\lambda_{k_j}}).
\end{equation}
Using the inequalities \eqref{Lemma5.1}, \eqref{Lemma5.2}, \eqref{Lemma5.3}, and the $1$-positive homogeneity of $G$, 
we obtain that for all $u \in X$ it holds that 
\begin{equation*}
\begin{aligned}
\|K\tilde{u}_{\lambda}-z\|_Y^2+ G(\lambda\tilde{u}_{\lambda}) & \leqslant \liminf _{j \rightarrow \infty}\|K\tilde{u}_{\lambda_{k_j}}-z_{k_j}\|_Y^2+ G(\lambda_{k_j}\tilde{u}_{\lambda_{k_j}}) \\
& = \liminf_{j \rightarrow \infty}\|K\tilde{u}_{\lambda_{k_j}}-z_{k_j}\|_Y^2+ \lambda_{k_j}G(\tilde{u}_{\lambda_{k_j}}) \\
& \leqslant  \liminf_{j \rightarrow +\infty} \|Ku-z_{k_j}\|_Y^2+\lambda_{k_j}G(u)\\
& =\|Ku-z\|_Y^2+\lambda G(u).
\end{aligned}
\end{equation*}
This shows that $\tilde{u}_{\lambda}$ is a minimizer of $\mathcal{P}_{\lambda}(z)$.
 \end{proof}

\subsection{Implicit Function Theorem}\label{SecIFT}
 In this section, we state and provide a proof of a variant of the classical implicit function theorem whose proof is inspired by the celebrated Goursat implicit function theorem \cite{Goursat}.
The main difference with the classical implicit function theorem is that it considers general Banach spaces and it does not require the differentiability of the function with respect to all variables.
 A proof can be also found in \cite[Theorem 3.4.10]{Krantz} for functions defined in the product of open subsets of the initial product space. For the sake of completeness we propose a proof in our setting, by adapting the proof in \cite[Theorem 3.4.10]{Krantz}.
 
 \begin{theorem}[Goursat] \label{GT}
    Let $X, Y, W$ be Banach spaces and $U \times V$ be a subset of $X\times Y$, where $U \subset X$ is open and $V \subset Y$ ($U$ and $V$ are endowed with the respective topologies). Suppose that $f: U \times V \rightarrow W$ is a continuous function such that $(Df)_u$, the Frech\'et derivative of $f$ with respect to the first variable, exists and is continuous at each point $(u,v)$ of $U \times V$. Assume also that there exists a point $(u_0, v_0) \in U \times V$ such that $f(u_0, v_0)=0$, and that $(Df)_u(u_0,v_0)$ is invertible with bounded inverse.

Then, there exist two open balls $B_r(u_0) \subset U$ and $B_s(v_0) \subset Y$ such that, for each $\Bar{v} \in B_s(v_0) \cap V$, there exists a unique $\Bar{u} \in B_r(u_0)$ satisfying $f(\Bar{u}, \Bar{v})=0$. Moreover, the function $g : B_s(v_0)\cap V \rightarrow B_r(u_0)$ uniquely defined by the condition $g(\Bar{v})=\Bar{u}$ is continuous.
 \end{theorem}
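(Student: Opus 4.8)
\textbf{Proof plan for Theorem \ref{GT} (Goursat implicit function theorem).}

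The plan is to reduce the problem to a fixed-point argument, following the adaptation of the Goursat proof as in \cite[Theorem 3.4.10]{Krantz}, but carrying it out in the stated setting where $f$ is only assumed continuous (and $C^1$ in the first variable) and $V$ carries only a topology, not a differentiable structure. First I would normalise: set $L := (Df)_u(u_0,v_0)$, which by hypothesis is a bounded linear isomorphism of $X$ onto $W$ with bounded inverse $L^{-1}$. For fixed $\bar v$, define the map $\Phi_{\bar v}(u) := u - L^{-1} f(u,\bar v)$, so that $u$ is a zero of $f(\cdot,\bar v)$ if and only if $u$ is a fixed point of $\Phi_{\bar v}$. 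The strategy is to show that, for $\bar v$ close enough to $v_0$ and $u$ in a small closed ball $\overline{B_r(u_0)}$, the map $\Phi_{\bar v}$ is a contraction of that ball into itself, apply the Banach fixed-point theorem to get existence and uniqueness of $\bar u = g(\bar v)$, and finally deduce continuity of $g$ from the uniformity of the estimates.

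The key steps, in order: (1) Using continuity of $(Df)_u$ at $(u_0,v_0)$, choose $r_0 > 0$ and a neighbourhood so that $\|(Df)_u(u,v) - L\|_{\mathcal L(X,W)} \leqslant \tfrac{1}{2\|L^{-1}\|}$ whenever $u \in \overline{B_{r_0}(u_0)} \subset U$ and $v$ is in that neighbourhood of $v_0$; then for such $u_1,u_2$ and $\bar v$, the mean value inequality for $C^1$ maps between Banach spaces gives $\|\Phi_{\bar v}(u_1) - \Phi_{\bar v}(u_2)\|_X \leqslant \|L^{-1}\| \cdot \sup_{u}\|(Df)_u(u,\bar v) - L\|_{\mathcal L(X,W)} \cdot \|u_1 - u_2\|_X \leqslant \tfrac12 \|u_1 - u_2\|_X$, the contraction estimate. (2) Using continuity of $f$ at $(u_0,v_0)$ and $f(u_0,v_0) = 0$, shrink to radii $r \leqslant r_0$ and $s > 0$ so that $\|L^{-1} f(u_0,\bar v)\|_X = \|\Phi_{\bar v}(u_0) - u_0\|_X \leqslant \tfrac r2$ whenever $\bar v \in B_s(v_0) \cap V$; combined with (1) this yields $\|\Phi_{\bar v}(u) - u_0\|_X \leqslant \|\Phi_{\bar v}(u) - \Phi_{\bar v}(u_0)\|_X + \|\Phi_{\bar v}(u_0) - u_0\|_X \leqslant \tfrac12 \|u - u_0\|_X + \tfrac r2 \leqslant r$ for $u \in \overline{B_r(u_0)}$, so $\Phi_{\bar v}$ maps the closed ball into itself. (3) Banach fixed-point theorem on the complete metric space $\overline{B_r(u_0)}$ gives, for each $\bar v \in B_s(v_0)\cap V$, a unique fixed point $g(\bar v) =: \bar u \in \overline{B_r(u_0)}$, hence a unique zero of $f(\cdot,\bar v)$ there; a small additional argument (strict inequality / openness) puts $\bar u$ in the open ball $B_r(u_0)$, or one simply works with the open ball and a slightly smaller closed one. (4) Continuity of $g$: for $\bar v_1, \bar v_2 \in B_s(v_0)\cap V$, write $\bar u_i = \Phi_{\bar v_i}(\bar u_i)$ and estimate $\|\bar u_1 - \bar u_2\|_X \leqslant \|\Phi_{\bar v_1}(\bar u_1) - \Phi_{\bar v_1}(\bar u_2)\|_X + \|\Phi_{\bar v_1}(\bar u_2) - \Phi_{\bar v_2}(\bar u_2)\|_X \leqslant \tfrac12 \|\bar u_1 - \bar u_2\|_X + \|L^{-1}\| \, \|f(\bar u_2,\bar v_1) - f(\bar u_2,\bar v_2)\|_W$, so $\|\bar u_1 - \bar u_2\|_X \leqslant 2\|L^{-1}\| \, \|f(\bar u_2,\bar v_1) - f(\bar u_2,\bar v_2)\|_W$, and the right-hand side tends to $0$ as $\bar v_1 \to \bar v_2$ by continuity of $f$ in the second variable (at fixed first argument), giving continuity of $g$.

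The main obstacle is step (1): justifying the contraction estimate for $\Phi_{\bar v}$ without differentiability of $f$ in $v$. The point is that the contraction only ever uses the derivative in the \emph{first} variable — so one needs a mean value inequality of the form $\|h(u_1) - h(u_2)\| \leqslant \sup_{u \in [u_1,u_2]} \|h'(u)\| \, \|u_1 - u_2\|$ for the map $h(u) := L^{-1}f(u,\bar v)$, which is $C^1$ on the convex set $\overline{B_r(u_0)}$ for each fixed $\bar v$; this is a standard consequence of the Hahn–Banach theorem applied to the scalar function $t \mapsto \langle \ell, h(u_2 + t(u_1-u_2))\rangle$. One must also be careful that the neighbourhood of $v_0$ controlling $\|(Df)_u(u,v) - L\|$ can be taken \emph{uniform in $u$} over the small ball, which follows from joint continuity of $(Df)_u$ on $U \times V$ together with compactness-free uniformity arguments (or, more simply, one shrinks $r$ and the $v$-neighbourhood together). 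Everything else is routine; I would present steps (1)–(2) carefully and keep (3)–(4) brief, referring to \cite{Krantz} and the Banach fixed-point theorem.
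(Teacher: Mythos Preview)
Your proposal is correct and follows essentially the same route as the paper: both define the auxiliary map $u\mapsto u - L^{-1}f(u,\bar v)$ with $L=(Df)_u(u_0,v_0)$, use continuity of $(Df)_u$ and the mean value inequality to obtain a uniform contraction in the first variable, shrink the $v$-ball so that the map sends a small ball into itself, and conclude via the Banach fixed-point theorem. The only minor difference is that the paper wraps up steps (3)--(4) by invoking \cite[Theorem~3.4.6]{Krantz}, whereas you spell out the continuity of $g$ directly from the fixed-point estimate; your version is slightly more self-contained but otherwise identical in substance.
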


 \begin{proof}
     Since $(Df)_u(u_0,v_0)$ is invertible by hypothesis, we can define:
    \begin{align*}
        h(u,v)=u-[(Df)_u(u_0,v_0)^{-1}]f(u,v)\quad \forall u\in U,v\in V. 
    \end{align*}
    Since $f$ and $(Df)_u$ are continuous functions, and $(Df)_u(u_0,v_0)^{-1}$ is bounded,
 we have that also $h$ and $(Dh)_u$ are continuous.
 In particular, for a point $(u_0, v_0) \in U \times V$ such that $f(u_0, v_0)=0$, it holds:
    \begin{align*}
        h(u_0,v_0)=u_0-[(Df)_u(u_0,v_0)^{-1}]f(u_0,v_0)=u_0
    \end{align*}
    and 
    \begin{align*}
        (Dh)_u(u_0,v_0)= Id -[(Df)_u(u_0,v_0)^{-1}](Df)_u(u_0,v_0)=0.
    \end{align*}
Since $h$ and $(Dh)_u$ are continuous functions with respect to both variables, for every $0<\varepsilon<1$,  there exist $r,s>0$, and two balls $B_r(u_0) \subset U$ and  $B_s(v_0) \subset Y$, such that, for all $(u,v)\in B_r(u_0)\times (B_s(v_0) \cap V)$, the following inequalities hold:
\begin{align}\label{A2}
\begin{aligned}
    \|h(u,v)-u_0\|_{X}&< \varepsilon,\\
   \|(Dh)_u(u,v)\|_{\mathcal{L}(X,X)}&< \varepsilon.
\end{aligned}
\end{align}
In particular, taking a smaller ball $B_s(v_0)$ if necessary, we can write: 
\begin{align}\label{A1}
    \|h(u_0,v)-u_0\|_{X}< (1-\varepsilon) r \quad \forall v\in B_s(v_0) \cap V.
\end{align}
Now, if we want to apply the contraction mapping fixed point principle, we need to prove that $h$ is a contraction in its first variable uniformly in $\Bar{v}\in B_s(v_0) \cap V$, that is
\begin{align}
    \|h(u_1,\Bar{v})-h(u_2,\Bar{v})\|_{X} \leqslant c\|u_2-u_1\|_X \quad \forall u_1,u_2\in B_r(u_0),
\end{align}
where $0<c<1$ is a constant.
Given $u_1,u_2 \in B_r(u_0)$,
applying a generalized version of the mean-value theorem (see for instance \cite[Theorem 6.5]{Pritchard}) and using \eqref{A2}, we obtain:
\begin{align*}
     \|h(u_1,\Bar{v})-h(u_2,\Bar{v})\|_{X}\leqslant \sup_{0<t<1}\|(Dh)_u(u_2+t(u_1-u_2),\Bar{v})\|_{\mathcal{L}(X,X)}\|u_1-u_2\|_X\leqslant\varepsilon\|u_1-u_2\|_X,
\end{align*}
implying that $h$ is a contraction. 
Finally, thanks to \eqref{A1}, we can apply \cite[Theorem 3.4.6]{Krantz}, which proof relies exactly on the contraction mapping fixed point principle as established in  \cite[Theorem 3.4.1]{Krantz}. This allows us to conclude that, for each $\Bar{v}\in B_s(v_0) \cap V$, there exists a unique $\Bar{u}\in B_r(u_0)$ such that $h(\Bar{u},\Bar{v})=\Bar{u}$. This, by the definition of $h$, is equivalent to the equation $f(\Bar{u}, \Bar{v})=0$.
Moreover, according to the same theorem \cite[Theorem 3.4.6]{Krantz}, we have that the unique function $g:B_s(v_0) \cap V\rightarrow B_r(u_0)$, defined by $g(\Bar{v})=\Bar{u}$, is continuous.

\end{proof}
	\end{document}